\numberwithin{equation}{section}
\numberwithin{figure}{section}
\theoremstyle{definition}
  \newtheorem{defn}{\protect\definitionname}
\theoremstyle{plain}
  \newtheorem{thm}{\protect\theoremname}
  \newtheorem{thm}{\protect\theoremname}[chapter]
\theoremstyle{remark}
  \newtheorem{rem}{\protect\remarkname}
  \newtheorem{rem}{\protect\remarkname}[chapter]
\theoremstyle{plain}
  \newtheorem{prop}{\protect\propositionname}
\theoremstyle{plain}
  \newtheorem{lem}{\protect\lemmaname}
  \newtheorem{lem}{\protect\lemmaname}[chapter]
\DeclareMathOperator {\supp}{supp}
\date{}
\def\clearpage{%
  \ifvmode
    \ifnum \@dbltopnum =\m@ne
      \ifdim \pagetotal <\topskip
        \hbox{}
      \fi
    \fi
  \fi
  \newpage
  \thispagestyle{empty}
  \write\m@ne{}
  \vbox{}
  \penalty -\@Mi
}
\newcommand{\Rolosaysnopage}[1]{{ }}
\def\Xint#1{\mathchoice
{\XXint\displaystyle\textstyle{#1}}%
{\XXint\textstyle\scriptstyle{#1}}%
{\XXint\scriptstyle\scriptscriptstyle{#1}}%
{\XXint\scriptscriptstyle\scriptscriptstyle{#1}}%
\!\int}
\def\XXint#1#2#3{{\setbox0=\hbox{$#1{#2#3}{\int}$}
\vcenter{\hbox{$#2#3$}}\kern-.5\wd0}}
\def\dashint{\Xint-}
\newsavebox{\@brx}
\newcommand{\llangle}[1][]{\savebox{\@brx}{\(\m@th{#1\langle}\)}%
  \mathopen{\copy\@brx\kern-0.5\wd\@brx\usebox{\@brx}}}
\newcommand{\rrangle}[1][]{\savebox{\@brx}{\(\m@th{#1\rangle}\)}%
  \mathclose{\copy\@brx\kern-0.5\wd\@brx\usebox{\@brx}}}
\providecommand{\definitionname}{Definition}
\providecommand{\lemmaname}{Lemma}
\providecommand{\propositionname}{Proposition}
\providecommand{\remarkname}{Remark}
\providecommand{\theoremname}{Theorem}
\DeclareMathOperator*{\esssinf}{ess\ inf}
\begin{document}

\title{Quantitative matrix weighted estimates for certain singular integral operators}

\author{Pamela A. Muller}

\address{Universidad Nacional del Sur - Instituto de Matemática de Bahía Blanca,
Bahía Blanca, Argentina.}

\email{pamela.muller@uns.edu.ar}

\author{Israel P. Rivera-Ríos }

\address{Universidad Nacional del Sur - Instituto de Matemática de Bahía Blanca,
Bahía Blanca, Argentina.}

\email{israel.rivera@uns.edu.ar}
\begin{abstract}
In this paper quantitative weighted matrix estimates for vector valued extensions of $L^{r'}$-H\"ormander operators and rough singular integrals are studied. Strong type $(p,p)$ estimates, endpoint estimates, and some new results on Coifman-Fefferman estimates assuming $A_\infty$ and $C_p$ condition counterparts are provided. To prove the aforementioned estimates we rely upon some suitable convex body domination results that we settle as well in this paper.
\end{abstract}

\maketitle

\section{Introduction}
We recall that given $p>1$, a non negative locally integrable function
$w$ is an $A_{p}$ weight if 
\[
[w]_{A_{p}}=\sup_{Q}\frac{1}{|Q|}\int_{Q}w\left(\frac{1}{|Q|}\int_{Q}w^{-\frac{1}{p-1}}\right)^{p-1}<\infty
\]
and an $A_{1}$ weight if 
\[
[w]_{A_{1}}=\sup_{Q}\esssinf_{y\in Q}\frac{1}{|Q|}\int_{Q}w(x)w^{-1}(y)dx<\infty.
\]
These classes were introduced by Muckenhoupt to characterize
the weighted $L^{p}(w)$ boundedness of the Hardy-Littlewood maximal
function. Soon after Muckenhoupt's seminal work, a number of authors
such as Muckenhoupt himself, Wheeden, Hunt, Coifman, Fefferman, among
others devoted some works to study the relationship between singular
integrals and these classes of weights. 

The theory of weights has been a fruitful area of research since then,
with the study of a number of operators and settings too wide for us to be
able to sum it up in a few lines. For a long time the results in the literature of the area
were qualitative, in the sense that the dependence of the inequalities
on the weight or weights involved was not quantified in any sense. However, in the last decade it became
a trending topic in the area the study of the so called quantitative
estimates, namely estimates in which the dependence constants $[w]_{A_{p}}$ and $[w]_{A_{1}}$ was made explicit
and in which the best dependence in some sense was pursued. One of
the fundamental problems in this field that has motivated a large amount
of research, was solved by Hyt\"onen in \cite{H}, in which the so called $A_{2}$ conjecture
was settled. That conjecture, now theorem, says that for every Calderón-Zygmund operator $T$, 
\[
\|Tf\|_{L^{2}(w)}\leq c_{d,T}[w]_{A_{2}}\|f\|_{L^{2}(w)}.
\]
The efforts to understand better this question led to the development
of the sparse domination theory, that started with the seminal
work of Lerner \cite{L}, which has proved to be a powerful tool to
study quantitative estimates. 

Also the study of quantitative estimates lead to try to achive a further
understanding of those questions deriving in estimates in terms of
the $A_{p}$ constants and the $A_{\infty}$ constant. We recall that
the $w\in A_{\infty}=\bigcup_{p\geq1}A_{p}$ if and only 
\[
[w]_{A_{\infty}}=\sup_{Q}\frac{1}{w(Q)}\int_{Q}M(\chi_{Q}w)<\infty.
\]
This constant was introduced by Fujii \cite{F} and rediscovered by
\cite{W} and was shown to be an interesting object of study for quantitative
estimates for first in \cite{HPAinfty}, due to the fact that as it was shown in that work, if $w\in A_p$ then $[w]_{A_\infty}\leq c_d[w]_{A_\infty}$. Since that work also a number of papers have been devoted to study as
well properties of spaces of functions and boudedness of operators
in terms of $[w]_{A_{\infty}}$. 

Vector valued extensions are one of the possible generalizations of
the classical scalar theory. That field of research has received the attention
of a number of authors during the last years. See for instance \cite{DKPS} for a very recent extension of the theory to the biparametric setting. 

Let $W:\mathbb{R}^{d}\rightarrow\mathbb{R}^{n\times n}$
be a matrix weight, namely a matrix function such that $W(x)$ is self-adjoint and
positive definite a.e. 

Given $f:\mathbb{R}^{d}\rightarrow\mathbb{R}^{n}$ and $1<p<\infty$,
we define
\[
\|\vec{f}\|_{L^{p}(W)}=\left(\int_{\mathbb{R}^{d}}\left|W^{\frac{1}{p}}(x)\vec{f}(x)\right|^{p}dx\right)^{\frac{1}{p}}.
\]
Let $1<p<\infty$. We say that a matrix weight $W$ is an $A_{p}$
weight if 
\begin{equation}\label{eq:Ap}
[W]_{A_{p}}=\sup_{Q}\frac{1}{|Q|}\int_{Q}\left(\frac{1}{|Q|}\int\left|W^{\frac{1}{p}}(x)W^{-\frac{1}{p}}(y)\right|_{op}^{p'}dy\right)^{\frac{p}{p'}}dx<\infty
\end{equation}
and if $p=1$ that $W\in A_1$ if
\begin{equation}\label{eq:A1}
[W]_{A_1}=\sup_Q{\displaystyle\esssinf_{y\in Q}}\frac{1}{|Q|}\int_Q|W(x)W^{-1}(y)|_{op}dx.
\end{equation}

Above and throughout the remainder of the paper $|A|_{op}$ stands for the norm as an operator of the matrix $A$, namely 
\[|A|_{op}=\sup_{e\in\mathbb{R}^n\setminus\{0\}}\frac{|A\vec{e}|}{|\vec{e}|}\]

Treil and Volberg \cite{TV} were the first in studying these weights
and their connection with singular integrals. Later on Goldberg \cite{G}
further explored that connection and provided results for certain
maximal functions, and also Nazarov and Treil \cite{NT} and Volberg
\cite{V} further studied the boundedness of Calderón-Zygmund operators.
At this point we would like to note that the definition of the $A_{p}$
class that we have just presented here seems to have appeared for first in \cite{R} and
is equivalent to the definitions in the aforementioned works. The definition of the matrix $A_1$ condition is due to Roudenko and Frazier \cite{FR}.

Contrary to what happens in the scalar case, it is not known whether for $p=2$ the dependence on the matrix $A_2$ constant of Calder\'on-Zygmund operators is linear or not. The current record is due to Nazarov,
Petermichl, Treil and Volberg \cite{NPTV}, who showed that
\[
\|T\vec{f}\|_{L^{2}(W)}\leq c_{n,T}[W]_{A_2}^{\frac{3}{2}}\|\vec{f}\|_{L^{2}(W)}.
\]
That estimate was generalized for $p\not=2$ in \cite{CUIM}.
The aforementioned results rely upon a suitable adaption of the sparse domination,
the so called convex body domination. In the case of the maximal function, the dependence of the scalar case was retrieved in \cite{IKP}, and the only case of singular operator up until now in which the sharp dependence has been settled is the square function \cite{HPV}. 

In the case of estimates in terms of the $A_{1}$ constant, the best dependences have been retrieved for several operators in \cite{IPRR}. 

\section{Main results}

In this paper our purpose is to provide strong type and and endpoint
weak type quantitative estimates for vector valued extensions of rough
singular integrals and $L^{r'}$-Hörmander operators. Some results
had already been obtained for maximal rough singular integrals in
\cite{DiPHL}. In the case of $L^{r'}$-Hörmander operators we are
not aware of any result in this direction. 

We will also provide Coifman-Fefferman estimates going beyond the
$A_{\infty}$ condition and providing a counterpart of the $C_{p}$
condition in this setting.

A key ingredient in our proofs is convex body domination for all the aforementioned operators. We will as well provide convex body domination results for both $L^{r'}$-Hörmander operators and rough singular integrals, relying upon ideas of \cite{NPTV,DiPHL,Le,Li}.

To present our results we need a few more definitions. 

We say that $T$ is an $L^{r}$-Hörmander singular operator if $T$
is bounded on $L^{2}$ and it admits the following representation
\begin{equation}
Tf(x)=\int_{\mathbb{R}^{n}}K(x,y)f(y)dy\label{eq:Rep}
\end{equation}
provided that $f\in\mathcal{C}_{c}^{\infty}$ and $x\not\in\supp f$
where $K:\mathbb{R}^{d}\times\mathbb{R}^{d}\setminus\left\{ (x,x)\,:\,x\in\mathbb{R}^{d}\right\} \rightarrow\mathbb{R}$
is a locally integrable kernel satisfying the $L^{r}$-Hörmander condition,
namely 
\begin{align*}
H_{r,1} & =\sup_{Q}\sup_{x,z\in\frac{1}{2}Q}\sum_{k=1}^{\infty}\left(2^{k}l(Q)\right)^{d}\left\Vert \left(K(x,\cdot)-K(z,\cdot)\right)\chi_{2^{k}Q\setminus2^{k-1}Q}\right\Vert _{L^{r},2^{k}Q}<\infty.\\
H_{r,2} & =\sup_{Q}\sup_{x,z\in\frac{1}{2}Q}\sum_{k=1}^{\infty}\left(2^{k}l(Q)\right)^{d}\left\Vert \left(K(\cdot,x)-K(\cdot,z)\right)\chi_{2^{k}Q\setminus2^{k-1}Q}\right\Vert _{L^{r},2^{k}Q}<\infty.
\end{align*}

Given $\Omega\in L^{\infty}(\mathbb{S}^{d-1})$ with $\int_{\mathbb{S}^{d-1}}\Omega=0$,
we define the rough singular interal $T_{\Omega}$ as 
\[
T_{\Omega}(f)=\lim_{\varepsilon\rightarrow0}\int_{|x-y|>\varepsilon}\frac{\Omega\left(\frac{x-y}{|x-y|}\right)}{|x-y|^{d}}f(y)dy
\]

Let $p\geq1$. we say that a weight $W\in A_{\infty,p}^{sc}$ if
\begin{equation}\label{eq:AinftyMat}
[W]_{A_{\infty,p}^{sc}}=\sup_{e\in\mathbb{\mathbb{R}}^{n}}[|W\vec{e}|^{p}]_{A_{\infty}}<\infty.
\end{equation}

We recall that given a linear operator $T$ and an orthonormal basis ${e_j}$ of $\mathbb{R}^n$ we define the vector valued extension of $T$ by
\[T(\vec{g})(x)=\sum_jT(\langle\vec{g}, e_j\rangle)(x)e_j.\]
It is worth noting that this expression is independent of the basis chosen.

In the following subsections we gather the statements of our main
results and some further comments.

\subsection{$A_p$ estimates}
The results in this subsection provide counterparts for rough singular integrals and $L^{r'}$-H\"ormander of the estimates obtained in \cite{NPTV} and \cite{CUIM} for Calder\'on-Zygmund operators. Since we push forward techniques in those papers also, as one may expect, the estimates obtained do not match with the ones obtained in the scalar case.

\begin{thm}
\label{Thm:RoughAp}Let $\Omega\in L^\infty(\mathbb{S}^{d-1})$ with $\int_{\mathbb{S}^{d-1}}\Omega=0$ and $1<p<\infty$. Then if $W\in A_p$ we have that
\[\|T_\Omega\vec{f}\|_{L^p(W)}\lesssim \|\Omega\|_{L^\infty(\mathbb{S}^{d-1})}[W]_{A_p}^{\frac{1}{p}}[W^{-\frac{p'}{p}}]_{A^{sc}_{\infty,p'}}^\frac{1}{p}[W]_{A^{sc}_{\infty,p}}^{\frac{1}{p'}} \min\left\{[W]_{A^{sc}_{\infty,p}},[W^{-\frac{p'}{p}}]_{A^{sc}_{\infty,p'}}\right\}\|\vec{f}\|_{L^p(W)}\]
\end{thm}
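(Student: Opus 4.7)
The plan is to combine the convex body sparse domination for $T_\Omega$ established earlier in the paper (following ideas of \cite{NPTV} and \cite{DiPHL}) with the matrix $A_p$--$A_\infty$ reducing operator machinery of \cite{NPTV,CUIM}. By duality,
\[\|T_\Omega\vec f\|_{L^p(W)}=\sup\bigl\{|\langle T_\Omega\vec f,\vec g\rangle|:\|\vec g\|_{L^{p'}(W^{1-p'})}\leq 1\bigr\},\]
so it suffices to bound a single bilinear pairing against a test function $\vec g$.

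The first step is to invoke the convex body domination for $T_\Omega$: one obtains a sparse family $\mathcal{S}$ together with a bound of the form
\[|\langle T_\Omega\vec f,\vec g\rangle|\lesssim \|\Omega\|_{L^\infty(\mathbb{S}^{d-1})}\sum_{Q\in\mathcal{S}}|Q|\,\rho_Q(\vec f,\vec g),\]
where $\rho_Q$ is built from the convex body averages of $\vec f$ and $\vec g$ in an $L^{r}$ sense for some $r>1$. (Rough kernels only admit a bilinear, not pointwise, sparse domination, which is why $L^r$ testing with $r>1$ is forced even in the scalar case.)

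The second step follows \cite{NPTV,CUIM}: one replaces the convex body averages by reducing matrices $\mathcal{V}_Q,\mathcal{V}'_Q$ adapted to $W^{1/p}$ and $W^{-1/p}$ on each $Q$. The matrix $A_p$ condition \eqref{eq:Ap} then yields $|\mathcal{V}_Q\mathcal{V}'_Q|_{\text{op}}\lesssim [W]_{A_p}^{1/p}$, and $\rho_Q$ is dominated by this factor times scalar $L^{r}$ averages of $|W^{1/p}\vec f|$ and $|W^{-1/p}\vec g|$. Pulling out $[W]_{A_p}^{1/p}$ reduces the problem to estimating a scalar sparse bilinear form in two weighted vector fields. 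To sharpen the remaining $L^r$ averages into the $L^p$ and $L^{p'}$ ones needed in order to apply H\"older against the sparse measure, I would apply a scalar $A_\infty$ self-improvement (in the spirit of Hyt\"onen--P\'erez) to the scalar weights $|W^{1/p}\vec e|^p$ and $|W^{-1/p}\vec e|^{p'}$, uniformly in $\vec e\in\mathbb{R}^n$; this is precisely what \eqref{eq:AinftyMat} quantifies, producing the factors $[W^{-p'/p}]_{A_{\infty,p'}^{sc}}^{1/p}$ and $[W]_{A_{\infty,p}^{sc}}^{1/p'}$ as in \cite{CUIM}. The remaining sparse-operator norm can be bounded by either $[W]_{A_{\infty,p}^{sc}}$ or $[W^{-p'/p}]_{A_{\infty,p'}^{sc}}$, depending on which side one estimates in $L^p(w)$ after a Lerner--Moen type exchange of roles, which is what produces the $\min\{\cdot,\cdot\}$ factor at the end.

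The main obstacle will be executing the $A_\infty$ step uniformly in $\vec e\in\mathbb{R}^n$: the convex body formulation only controls the matrix--vector pairing in an $L^r$ sense, so one must show that the coordinatewise John--Nirenberg-type self-improvement for $|W\vec e|^p$ is fully compatible with the reducing-matrix estimates of step two and picks up no spurious $\vec e$-dependent constant. This coordinated scalar-vs-matrix reasoning is exactly what dictates the presence of both $A_{\infty,p}^{sc}$ and $A_{\infty,p'}^{sc}$ constants, with the asymmetric exponents $1/p$ and $1/p'$ that appear in the final statement.
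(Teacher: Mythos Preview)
Your plan is essentially the paper's own approach: convex body sparse domination for $T_\Omega$, Lemma~\ref{lem:KeyAp} to pass to reducing matrices, Lemma~\ref{lem:ApFromRH} to extract $[W]_{A_p}^{1/p}$, and then reverse H\"older on the scalar weights $|W^{1/p}\vec e|^p$ and $|W^{-1/p}\vec e|^{p'}$ to control the two weighted maximal operators. The dual estimate is obtained exactly as you indicate, by exchanging the roles of $\vec f$ and $\vec g$; concretely the paper applies the sparse domination to $T_\Omega^*$, which is again a rough singular integral.

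One point in your description is slightly off and worth flagging, because it is the source of the whole $\min$ factor. The extra $A_\infty$ constant does \emph{not} come from ``the remaining sparse-operator norm'' after the reducing-matrix reduction. It comes from the prefactor $s'$ in the convex body domination for $T_\Omega$ itself (Theorem~\ref{Thm:Rough}): rough kernels only satisfy a $(1,s)$ sparse bound with constant $\sim s'$, so one is forced to choose $s>1$ close to $1$. The paper picks $s$ so that $s'\simeq [W]_{A_{\infty,p}^{sc}}$ and so that the reverse H\"older step (with exponent $s\gamma$ for a suitable $\gamma$) is still legal; Lemmas~\ref{lem:param1} and \ref{lem:param2} handle the bookkeeping of these parameters. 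This $s'$ is the extra $[W]_{A_{\infty,p}^{sc}}$ on one side and, via the adjoint, the extra $[W^{-p'/p}]_{A_{\infty,p'}^{sc}}$ on the other; the factors $[W^{-p'/p}]_{A_{\infty,p'}^{sc}}^{1/p}$ and $[W]_{A_{\infty,p}^{sc}}^{1/p'}$ come separately from bounding the two Christ--Goldberg type maximal operators by scalar $M_r$ operators. Your ``main obstacle'' paragraph is therefore slightly misplaced: the uniformity in $\vec e$ is handled cleanly by the reducing-matrix formalism, and the real work is the careful simultaneous choice of $s$, $r$, $\gamma$ so that the reverse H\"older bumps line up.
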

Note that this estimate was sketched for the case $p=2$ in \cite[Remark 6.6]{DiPHL}. Here we provide a full proof and extend the result to every $p>1$.

\begin{thm}
\label{Thm:HormAp}Let $1<r<p<\infty$ and let $T$ be a $L^{r'}$-H\"ormander operator.  Then if $W\in A_{p/r}$ we have that
\[\|T\vec{f}\|_{L^p(W)}\lesssim [W]_{A_{\frac{p}{r}}}^{\frac{1}{p}} [W^{-\frac{r}{p}(\frac{p}{r})'}]_{A^{sc}_{\infty,(\frac{p}{r})'}}^{\frac{1}{p}} [W]_{A^{sc}_{\infty,\frac{p}{r}}}^{\frac{1}{p'}}\|\vec{f}\|_{L^p(W)}\]
\end{thm}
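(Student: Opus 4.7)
We follow the scheme of \cite{NPTV,CUIM} for Calder\'on--Zygmund operators, replacing their convex body sparse domination by its analogue for $L^{r'}$-H\"ormander operators, which we establish separately in this paper. That domination furnishes, for compactly supported $\vec f,\vec g$, a sparse family $\mathcal S$ and, for each $Q\in\mathcal S$, vectors $\vec a_Q\in\llangle\vec f\rrangle_{Q,r}$ (the $L^r$-averaged convex body over $Q$) and $\vec b_Q\in\llangle\vec g\rrangle_Q$ satisfying
\[|\ip{T\vec f}{\vec g}|\lesssim \sum_{Q\in\mathcal S}|Q|\,\ip{\vec a_Q}{\vec b_Q}.\]
By the standard duality $\|T\vec f\|_{L^p(W)}=\sup_{\|\vec g\|_{L^{p'}(W^{-p'/p})}\leq 1}\ip{T\vec f}{\vec g}$, the task reduces to estimating this sparse form by the constant stated in the theorem times $\|\vec f\|_{L^p(W)}\|\vec g\|_{L^{p'}(W^{-p'/p})}$.

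For each cube $Q$ we introduce two reducing matrices: $\mathcal W_Q$ associated with $\vec e\mapsto\pr{\frac{1}{|Q|}\int_Q|W^{1/p}(x)\vec e|^{r}dx}^{1/r}$ and $\mathcal V_Q$ associated with $\vec e\mapsto\frac{1}{|Q|}\int_Q|W^{-1/p}(x)\vec e|dx$. Inserting $\mathcal W_Q\mathcal W_Q^{-1}=\mathrm{Id}$ into the pairing and using the defining property that $|\mathcal W_Q^{-1}\vec a_Q|\lesssim\pr{\frac{1}{|Q|}\int_Q|W^{1/p}\vec f|^r}^{1/r}$ (standard for $L^r$-convex bodies and their reducing matrices), we obtain
\[\ip{\vec a_Q}{\vec b_Q}\lesssim |\mathcal W_Q\mathcal V_Q|_{op}\pr{\frac{1}{|Q|}\int_Q|W^{1/p}\vec f|^r}^{1/r}\pr{\frac{1}{|Q|}\int_Q|W^{-1/p}\vec g|}.\]
An application of H\"older's inequality inside \eqref{eq:Ap}, passing from the exponents $(p/r,(p/r)')$ to the mixed exponents $(r,1)$ used above, yields $\sup_Q|\mathcal W_Q\mathcal V_Q|_{op}\lesssim [W]_{A_{p/r}}^{1/p}$, which accounts for the first factor in the theorem.

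The problem is thus reduced to a \emph{scalar} two-weight sparse inequality for the non-negative scalar functions $F=|W^{1/p}\vec f|$ and $G=|W^{-1/p}\vec g|$, namely
\[\sum_{Q\in\mathcal S}|Q|\,\langle F^r\rangle_Q^{1/r}\langle G\rangle_Q\lesssim [W^{-\frac{r}{p}(\frac{p}{r})'}]_{A^{sc}_{\infty,(\frac{p}{r})'}}^{1/p}[W]_{A^{sc}_{\infty,\frac{p}{r}}}^{1/p'}\|F\|_{L^p}\|G\|_{L^{p'}}.\]
This follows from the Hyt\"onen--P\'erez principal-cubes/Carleson-embedding argument of \cite{HPAinfty}, applied to the family of scalar weights $\{|W^{1/p}\vec e|^{p/r}\}_{\vec e}$ and $\{|W^{-1/p}\vec e|^{(p/r)'}\}_{\vec e}$, whose scalar $A_\infty$ constants are uniformly bounded by $[W]_{A^{sc}_{\infty,\frac{p}{r}}}$ and $[W^{-\frac{r}{p}(\frac{p}{r})'}]_{A^{sc}_{\infty,(\frac{p}{r})'}}$ respectively, by the very definition \eqref{eq:AinftyMat}. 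Combining with the matrix estimate above yields the theorem.

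\textbf{Main obstacle.} The delicate step is the scalar sparse bound with the sharp splitting $1/p$ and $1/p'$ between the two $A^{sc}_\infty$ factors: the testing and sharp reverse H\"older estimates of \cite{HPAinfty} must be run \emph{uniformly over} the unit sphere of directions $\vec e$ appearing through the reducing matrices. This uniformity is precisely what definition \eqref{eq:AinftyMat} is designed to provide, and once it is in place the remaining pieces assemble cleanly via H\"older's inequality.
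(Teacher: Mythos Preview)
Your reduction to a scalar sparse inequality rests on the claim that
\[
|\mathcal W_Q^{-1}\vec a_Q|\lesssim\left(\frac{1}{|Q|}\int_Q|W^{1/p}\vec f|^r\right)^{1/r},
\]
and this is false. Since $\vec a_Q=\frac{1}{|Q|}\int_Q\vec f\,\varphi$, H\"older gives $|\mathcal W_Q^{-1}\vec a_Q|\leq\bigl(\frac{1}{|Q|}\int_Q|\mathcal W_Q^{-1}\vec f|^r\bigr)^{1/r}$, and passing from $\mathcal W_Q^{-1}\vec f$ to $W^{1/p}\vec f$ would require the \emph{pointwise} operator bound $|\mathcal W_Q^{-1}W^{-1/p}(x)|_{op}\lesssim 1$ on $Q$, which the averaged definition of $\mathcal W_Q$ cannot deliver. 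Already in the scalar case take $Q=[0,1]$, $W=\varepsilon$ on $[0,1/2]$, $W=1$ on $(1/2,1]$, $f=\chi_{[0,1/2]}$: then $\mathcal W_Q^{-1}\langle f\rangle_{r,Q}\simeq 1$ while $\bigl(\frac{1}{|Q|}\int_Q W^{r/p}|f|^r\bigr)^{1/r}\simeq\varepsilon^{1/p}\to 0$. There is no ``standard'' property of convex bodies and reducing matrices that yields this; the reducing matrix for a norm built from $W^{1/p}$ does not act like $W^{1/p}$ pointwise.

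Notice also that your conclusion would be too strong: once you land on $\sum_{Q}|Q|\langle F^r\rangle_Q^{1/r}\langle G\rangle_Q$ with $F=|W^{1/p}\vec f|$, $G=|W^{-1/p}\vec g|$, this is an \emph{unweighted} bilinear sparse form and is bounded on $L^p\times L^{p'}$ by a constant depending only on $p,r$ and the sparseness---no $A_\infty$ input is needed. So your argument would produce $\|T\vec f\|_{L^p(W)}\lesssim[W]_{A_{p/r}}^{1/p}\|\vec f\|_{L^p(W)}$, which is false already for scalar weights and Calder\'on--Zygmund operators. The paper instead keeps the matrix inside the average: after the substitution $\vec h=W^{1/p}\vec f$, Lemma~\ref{lem:KeyAp} leads to maximal functions of the form $\sup_Q\bigl(\frac{1}{|Q|}\int_Q|\mathcal V_Q^{-1}W^{-1/p}\vec h|^r\bigr)^{1/r}$, and it is precisely in bounding these by scalar $M_s(|\vec h|)$---via H\"older together with a reverse H\"older bump built into the definition of $\mathcal V_Q,\mathcal U_Q$---that the two $A_\infty^{sc}$ factors appear with the exponents $1/p$ and $1/p'$.
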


\subsection{$A_1$ and $A_q$ estimates}

Our results here are the counterparts for rough singular integrals and $L^{r'}$-H\"ormander operators of the results obtained in \cite{IPRR}. In these cases we recover as well the optimal estimates known in the scalar case (see \cite{LPRRR}). We remit the interested reader to \cite{IPRR}, to read about further references about the motivation to study this kind of estimates.

\begin{thm} \label{Thm:A1}Let $W\in A_1$. We have that 
\begin{itemize} 
\item If $\Omega\in L^\infty(\mathbb{S}^{d-1})$ with $\int_{\mathbb{S}^{d-1}}\Omega=0$ and $1<p<\infty$ then 
\begin{equation}\label{eq:RoughA1}\|T_\Omega \vec{f}\|_{L^{p}(W)}\leq c_{n,p,d}\|\Omega\|_{L^\infty(\mathbb{S}^{d-1})} [W]_{A_{1}}^{\frac{1}{p}}[W]_{A^{sc}_{\infty,1}}^{\frac{1}{p'}}\|\vec{f}\|_{L^{p}(W)}\end{equation}
\item If $T$ is a $L^{r'}$-H\"ormander operator and $p>r$, then
\begin{equation}\label{eq:HormA1}\|T \vec{f}\|_{L^{p}(W)}\leq c_{n,T,p,d}\left(\frac{p}{r}\right)' [W]_{A_{1}}^{\frac{1}{p}}[W]_{A^{sc}_{\infty,1}}^{\frac{1}{p'}}\|\vec{f}\|_{L^{p}(W)}\end{equation}
\end{itemize}
\end{thm}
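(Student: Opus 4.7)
Both items will be proved by the same scheme, differing only in which convex body domination one feeds into the argument; this scheme is the matrix analogue of the one used in \cite{IPRR} for the scalar $A_1$ case, which in turn recovers the optimal dependences from \cite{LPRRR}.

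The first step is to invoke the convex body sparse domination results established earlier in the paper: for $T_\Omega$ with loss $\|\Omega\|_{L^\infty(\mathbb{S}^{d-1})}$ (following \cite{NPTV,DiPHL}), and for the $L^{r'}$-H\"ormander operator $T$ with loss $(p/r)'$ (following \cite{Le,Li}). By duality, bounding $\|T\vec{f}\|_{L^p(W)}$ reduces to controlling, uniformly in $\vec{g}$ with $\|\vec{g}\|_{L^{p'}(W^{-p'/p})}\leq 1$, a sparse bilinear form
\[
\sum_{Q\in\mathcal{S}}|Q|\,\bigl\langle \langle \vec{f}\rangle_Q^{cv},\langle \vec{g}\rangle_Q\bigr\rangle
\]
where $\langle\vec{f}\rangle_Q^{cv}$ denotes the convex body average over $Q$. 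Thus the whole proof is reduced to producing a quantitative bound for such a sparse form by $[W]_{A_1}^{1/p}\,[W]_{A_{\infty,1}^{sc}}^{1/p'}\,\|\vec{f}\|_{L^p(W)}\|\vec{g}\|_{L^{p'}(W^{-p'/p})}$.

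To bound the sparse form, first inject $W^{1/p}(x)W^{-1/p}(x)=I$ and $W^{-1/p'}(x)W^{1/p'}(x)=I$ into the pairing, so that on each $Q$ the factor coming from $\vec{f}$ is of the type $|W^{-1/p}(y)W^{1/p}(y)\vec{f}(y)|$ and the factor coming from $\vec{g}$ is controlled in terms of $W^{1/p'}\vec{h}$ for some $\vec{h}$ with $\|\vec{h}\|_{p'}\leq 1$. On each $Q$ one uses the definition of $[W]_{A_1}$ in \eqref{eq:A1} to move a copy of $W^{-1/p}$ outside the average at the cost of a factor $[W]_{A_1}^{1/p}$; this converts the $\vec{f}$-factor into a matrix-weighted maximal-type expression applied to $W^{1/p}\vec{f}$. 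The remaining sparse sum is then a Carleson-type sum in which the coefficients packed over each Carleson family carry the scalar $A_\infty$ mass of $|W\vec{e}|$ for unit directions $\vec{e}$, and therefore satisfies a Carleson packing condition governed by $[W]_{A_{\infty,1}^{sc}}$ as defined in \eqref{eq:AinftyMat}. A Carleson embedding / principal cubes argument (the matrix analogue of the scalar sharp estimate from \cite{HPAinfty}) then yields the factor $[W]_{A_{\infty,1}^{sc}}^{1/p'}$ and closes the estimate, producing \eqref{eq:RoughA1} and \eqref{eq:HormA1} respectively.

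The main obstacle lies in the sparse form estimate in the previous paragraph: in the scalar setting one simply factors $w^{1/p}w^{-1/p}$ inside averages and uses commutativity freely, whereas here the convex body $\langle\vec{f}\rangle_Q^{cv}$ and the factors $W^{\pm 1/p}$ do not commute and must be controlled via the operator norm $|W^{1/p}(x)W^{-1/p}(y)|_{op}$. Extracting precisely the exponent $1/p$ on $[W]_{A_1}$, and not a worse power, requires testing the $A_1$ condition at the essential-infimum point $y\in Q$ appearing in \eqref{eq:A1} and using John's ellipsoid-type comparisons (or, equivalently, reducing the vector valued pairing to a supremum over unit vectors $\vec{e}\in\mathbb{R}^n$ so as to match the definition of $[W]_{A_{\infty,1}^{sc}}$). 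Once this noncommutative bookkeeping is set up, the remaining Carleson packing step runs exactly as in \cite{IPRR}, and the only place where the two items of the theorem diverge is in the multiplicative constant inherited from the underlying convex body sparse domination.
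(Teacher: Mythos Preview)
Your plan diverges from the paper's proof in its core mechanism, and has at least two concrete gaps.

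\medskip
\textbf{How the paper actually proceeds.} There is no Carleson embedding or principal cubes argument. Instead the paper feeds the sparse form into Lemma~\ref{lem:KeyAp}, which bounds it by
\[
\sup_Q|\mathcal{U}_Q\mathcal{V}_Q|_{op}\,\|M_{\mathcal{V},W^{-1/p},s}\|_{L^p}\,\|M_{\mathcal{U},W^{1/p},1}\|_{L^{p'}},
\]
and then chooses $\mathcal{V}_Q=\mathcal{A}_Q^{-1/p}$, $\mathcal{U}_Q=\mathcal{A}_Q^{1/p}$ with $|\mathcal{A}_Q\vec{e}|\simeq \langle|W\vec{e}|\rangle_Q$, so that $|\mathcal{U}_Q\mathcal{V}_Q|_{op}=1$. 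The factor $[W]_{A_1}^{1/p}$ comes from bounding $M_{\mathcal{V},W^{-1/p},s}$ pointwise by $[W]_{A_1}^{1/p}M_s$ via Lemma~\ref{lem:Bownik} and the definition of $A_1$; the factor $[W]_{A_{\infty,1}^{sc}}^{1/p'}$ comes from bounding $M_{\mathcal{U},W^{1/p},1}$ by $M_{(p\beta)'}$ via the sharp reverse H\"older inequality (with $\beta-1\simeq [W]_{A_{\infty,1}^{sc}}^{-1}$) and then Lemma~\ref{lem:param1}. So the $A_\infty^{sc}$ information enters through reverse H\"older on the reducing matrix, not through a packing condition.

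\medskip
\textbf{Gap 1: the adjoint trick for $T_\Omega$.} You write the sparse form with a plain average $\langle\vec{g}\rangle_Q$, but the direct domination in Theorem~\ref{Thm:Rough} puts the $L^s$-bump on $\vec{g}$, with a prefactor $s'$. If you keep the bump on $\vec{g}$ you cannot get $[W]_{A_{\infty,1}^{sc}}^{1/p'}$ without paying an extra $[W]_{A_{\infty,1}^{sc}}$ from $s'$. The paper avoids this by applying Theorem~\ref{Thm:Rough} to $T_\Omega^\ast$, which places the $L^s$-bump on $\vec{f}$; one then fixes $s=(p+1)/2<p$ so that $s'$ is absorbed into $c_{n,p,d}$. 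Your plan does not mention this, and without it \eqref{eq:RoughA1} will come out with the wrong exponent.

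\medskip
\textbf{Gap 2: origin of the $(p/r)'$.} The convex body domination for $L^{r'}$-H\"ormander operators (Theorem~\ref{Thm:Horm}) carries \emph{no} factor $(p/r)'$; its constant is just $c_{n,d,T}$. The $(p/r)'$ in \eqref{eq:HormA1} appears only after one has reduced to $M_r$ acting on $L^p$, via $\|M_r\|_{L^p\to L^p}\simeq (p/r)'^{1/r}\le (p/r)'$. Your attribution of this loss to the sparse domination itself is incorrect and would leave you unable to account for the constant at the end.

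\medskip
\textbf{On your Carleson step.} Even granting the above fixes, the statement that ``the coefficients packed over each Carleson family carry the scalar $A_\infty$ mass of $|W\vec{e}|$'' is not obviously well posed: the relevant direction $\vec{e}$ is not fixed but is dictated cube-by-cube by the convex body pairing (equivalently, by the reducing matrices). Turning this into an honest Carleson condition with constant $[W]_{A_{\infty,1}^{sc}}$ seems to require precisely the uniform reverse H\"older inequality on $|\mathcal{A}_Q^{-1}W|_{op}$ that the paper uses directly; once you invoke that, you are effectively running the paper's maximal-function argument rather than a genuine Carleson embedding.
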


\begin{thm} \label{Thm:Aq} Let $1<q<p$ and $W\in A_q$. We have that 
\begin{itemize} 
\item If $\Omega\in L^\infty(\mathbb{S}^{d-1})$ with $\int_{\mathbb{S}^{d-1}}\Omega=0$ then 
\begin{equation}\label{eq:RoughAq}\|T_\Omega \vec{f}\|_{L^{p}(W)}\leq c_{n,p,d}\|\Omega\|_{L^\infty(\mathbb{S}^{d-1})} [W]_{A_{q}}^{\frac{1}{p}}[W]_{A^{sc}_{\infty,q}}^{\frac{1}{p'}}\|\vec{f}\|_{L^{p}(W)}\end{equation}
\item If $T$ is a $L^{r'}$-H\"ormander operator and $\frac{p}{q}>r$, then
\begin{equation}\label{eq:HormAq}\|T \vec{f}\|_{L^{p}(W)}\leq c_{n,T,p,d}\left(\frac{p}{rq}\right)' [W]_{A_{q}}^{\frac{1}{p}}[W]_{A^{sc}_{\infty,q}}^{\frac{1}{p'}}\|\vec{f}\|_{L^{p}(W)}\end{equation}
\end{itemize}
\end{thm}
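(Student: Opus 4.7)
The plan is to extend the proof of Theorem \ref{Thm:A1} from the $A_1$ case to the $A_q$ range, mirroring the scalar sharp argument of \cite{LPRRR} after the rescaling $p\mapsto p/q$, together with the matrix weighted machinery developed in this paper. The first step is to invoke the convex body domination results established for $T_\Omega$ and for $L^{r'}$-Hörmander operators. For a fixed compactly supported $\vec{f}$ these produce a sparse family $\mathcal{S}$ such that $T_\Omega\vec{f}(x)$ (respectively $T\vec{f}(x)$) lies in a convex body of the shape $K\sum_{Q\in\mathcal{S}}\llangle\vec{f}\rrangle_{q,Q}\chi_Q(x)$, where $\llangle\cdot\rrangle_{q,Q}$ denotes the $L^q$-convex body average over $Q$; in the Hörmander case one uses $L^{rq}$-averages, which is admissible precisely because $p/q>r$ ensures $rq<p$. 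The domination constant is $K\lesssim\|\Omega\|_{L^\infty(\mathbb{S}^{d-1})}$ in the rough case and $K\lesssim(p/(rq))'$ in the Hörmander case, matching the constants announced in the theorem.

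Second, by the duality of matrix weighted $L^p$ spaces and the standard extraction of a convex body sparse form, the proof reduces to bounding a bilinear sparse form indexed by $\mathcal{S}$. I would introduce reducing matrices $\mathcal{W}_Q$ and $\mathcal{W}_Q^\ast$ adapted to $W$ and its $A_q$-dual, in the spirit of \cite{NPTV,CUIM}, and use the fact that their product satisfies a uniform $\mathrm{op}$-norm bound controlled by $[W]_{A_q}$ to absorb a factor $[W]_{A_q}^{1/p}$ into the sparse form. After this step the sparse form is scalar in nature, involving $L^q$ and $L^{q'}$ averages of quantities such as $|\mathcal{W}_Q^{\ast,-1}\vec{f}|$ tested against a dual unit vector.

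The resulting scalar sparse form can then be handled by the principal cubes argument of \cite{LPRRR}, whose stopping constants depend only on the scalar $A_\infty$ characteristics of the weights $|W(\cdot)\vec{e}|^q$, uniformly in the unit vector $\vec{e}$; by definition these are controlled by $[W]_{A_{\infty,q}^{sc}}$, which yields the factor $[W]_{A_{\infty,q}^{sc}}^{1/p'}$ in the final bound. The main technical obstacle, compared with the $q=1$ case of Theorem \ref{Thm:A1}, lies in organising the reducing matrices so that the stopping procedure is driven exclusively by the scalar $A_{\infty,q}^{sc}$ gauge rather than by a larger matrix $A_\infty$ quantity; adapting the argument of \cite{IPRR} from $q=1$ to general $q>1$ is the essential new point, and once this is in place the two cases of the theorem follow by plugging in the corresponding values of $K$ from the convex body domination step.
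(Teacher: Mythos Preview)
Your plan diverges from the paper's proof and, more importantly, rests on several claims that are either unavailable or mislocated.

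First, the domination step you invoke does not exist in the form you state. For $T_\Omega$ the paper only provides the \emph{bilinear} convex body bound of Theorem~\ref{Thm:Rough}, of the shape $s'\sum_{Q}\llangle\vec{f}\rrangle_{1,Q}\llangle\vec{g}\rrangle_{s,Q}|Q|$ with a free parameter $s>1$; there is no pointwise convex body domination for rough singular integrals here (and none is known even in the scalar case), and there is no version with $\llangle\vec f\rrangle_{q,Q}$ for a prescribed $q$. For the $L^{r'}$-H\"ormander operator, Theorem~\ref{Thm:Horm} gives $\sum_{Q}\llangle\vec f\rrangle_{r,Q}\llangle\vec g\rrangle_{1,Q}|Q|$ with fixed $r$ and constant $c_{n,d,T}$; there is no $\llangle\vec f\rrangle_{rq,Q}$ version, and the factor $\big(\tfrac{p}{rq}\big)'$ does \emph{not} come from the domination. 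In the paper it arises later, from the $L^p$ bound of the auxiliary maximal operator.

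Second, the paper's argument does not use principal cubes or any stopping driven by scalar $A_\infty$. After applying the bilinear domination (to $T_\Omega^*$ in the rough case) and Lemma~\ref{lem:KeyAp}, one chooses $\mathcal{V}_Q=(\mathcal{W}'_{q,Q})^{q/p}$ and $\mathcal{U}_Q=\mathcal{V}_Q^{-1}$, so that $|\mathcal{U}_Q\mathcal{V}_Q|_{op}=1$; the constant $[W]_{A_q}^{1/p}$ does \emph{not} come from the product of reducing matrices but appears when bounding $\|M_{\mathcal{U},W^{1/p},1}\|_{L^{p'}}$ via H\"older plus the reverse H\"older inequality from $A^{sc}_{\infty,q}$ (this same step yields $[W]_{A^{sc}_{\infty,q}}^{1/p'}$). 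The other maximal term $\|M_{\mathcal{V},W^{-1/p},s}\|_{L^{p}}$ is handled by Lemma~\ref{lem:Bownik} and H\"older, reducing to $M_{sq}$; choosing $s=\tfrac{p/q+1}{2}$ gives a harmless constant in the rough case, while in the H\"ormander case $s=r$ is forced by the domination and produces exactly $\big(\tfrac{p}{rq}\big)'$. Your outline would need to replace the nonexistent $L^q$/$L^{rq}$ pointwise domination by the actual bilinear forms and then follow this reducing-matrix route rather than a stopping-time argument.
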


\subsection{Coifman-Fefferman estimates}
We recall that the classical Coifman-Fefferman inequality asserts that if $T$ is a Calder\'on-Zygmund operator, $0<p<\infty$ and $w\in A_\infty$
\begin{equation}\label{eq:CF}
\|Tf\|_{L^p(w)} \lesssim c_w\|Mf\|_{L^p(w)}.
\end{equation}
Note that a quantitative version with $c_w=[w]_{A_\infty}$ was obtained in \cite{OCPR}. In the case of rough singular integrals the corresponding quantitative counterpart was settled in \cite{LPRRR} and for $L^{r'}$-H\"ormander operators, for instance in \cite{IFRR}. Our vector valued counterpart is the following result.
\begin{thm}
Let $p>1$. Then
\begin{enumerate}
\item If $W\in A_{\infty,p}^{sc}$ and $T$ is a Calderón-Zygmund operator
\[
\||W^{\frac{1}{p}}T(W^{-\frac{1}{p}}\vec{f})|\|_{L^p(\mathbb{R}^d)}\lesssim[W]_{A_{\infty,p}^{sc}}^{\frac{1}{p}}\left\Vert \sup_{Q}\frac{1}{|Q|}\int_{Q}|\mathcal{W}{}_{p,Q}W^{-\frac{1}{p}}\vec{f}|\right\Vert _{L^{p}(\mathbb{R}^{d})}.
\]
\item If $W\in A_{\infty,p}^{sc}$ and $\Omega\in L^{\infty}(\mathbb{S}^{d-1})$
with $\int_{\mathbb{S}^{d-1}}\Omega=0$ then
\[
\||W^{\frac{1}{p}}T_\Omega(W^{-\frac{1}{p}}\vec{f})|\|_{L^p(\mathbb{R}^d)}\lesssim[W]_{A_{\infty,p}^{sc}}^{1+\frac{1}{p}}\left\Vert \sup_{Q}\frac{1}{|Q|}\int_{Q}|\mathcal{W}{}_{p,Q}W^{-\frac{1}{p}}\vec{f}|\right\Vert _{L^{p}(\mathbb{R}^{d})}.
\]
\item If $W\in A_{\infty,\frac{p}{r}}^{sc}$ and $T$ is a $L^{r'}$-Hörmander
operator and $p>r$ then
\[
\||W^{\frac{1}{p}}T(W^{-\frac{1}{p}}\vec{f})|\|_{L^p(\mathbb{R}^d)}\lesssim[W]_{A_{\infty,\frac{p}{r}}^{sc}}^{\frac{1}{p}}\left\Vert \sup_{Q}\left(\frac{1}{|Q|}\int_{Q}|\mathcal{W}{}_{\frac{p}{r},Q}^{\frac{1}{r}}W^{-\frac{1}{p}}\vec{f}|^{r}\right)^{\frac{1}{r}}\right\Vert _{L^{p}(\mathbb{R}^{d})}.
\]
\end{enumerate}
\end{thm}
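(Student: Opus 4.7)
The plan is to derive each of the three estimates from the corresponding convex body sparse domination results established earlier in the paper (for Calder\'on--Zygmund operators the scheme of \cite{NPTV}, for $T_\Omega$ the Orlicz-type domination of \cite{DiPHL}, and for $L^{r'}$-H\"ormander operators the new $L^r$-averaged domination), combined with the Goldberg reducing matrices $\mathcal{W}_{s,Q}$ and a direction-by-direction application of the scalar sparse Coifman--Fefferman principle.

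First I would dualize the left-hand side against $\vec{h}\in L^{p'}(\mathbb{R}^d;\mathbb{R}^n)$ with $\|\vec h\|_{p'}\leq 1$, so that the norm is controlled by the supremum of
$$\mathcal{I}\;=\;\int_{\mathbb{R}^d}\langle W^{1/p}(x)\,T(W^{-1/p}\vec{f})(x),\,\vec{h}(x)\rangle\,dx.$$
Invoking the relevant convex body sparse domination, $\mathcal{I}$ is bounded by
$$\mathcal{I}\;\lesssim\;\sum_{Q\in\mathcal{S}}|Q|\,\bigl\langle|W^{-1/p}\vec{f}|\bigr\rangle_{Q,X}\,\bigl\langle|W^{1/p}\vec{h}|\bigr\rangle_{Q,X'},$$
where $(X,X')=(L^1,L^\infty)$ in case (1), $(X,X')=(L\log L,L^\infty)$ in case (2), and $(X,X')=(L^r,L^{r'})$ in case (3).

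Next I would insert the reducing matrices in both averages via the John-ellipsoid identity $|\mathcal{W}_{s,Q}\vec{e}|^s\simeq\tfrac{1}{|Q|}\int_Q|W^{1/s}\vec{e}|^s$: for (1) and (2) taking $s=p$ yields the average $\tfrac{1}{|Q|}\int_Q|\mathcal{W}_{p,Q}W^{-1/p}\vec{f}|$ exactly as written in the theorem, while for (3) taking $s=p/r$ produces the $L^r$-analogue with $\mathcal{W}_{p/r,Q}^{1/r}$. The sparse sum then takes the scalar form $\sum_Q|Q|\,a_Q\,b_Q$, with $a_Q$ the average of the matrix maximal appearing on the right-hand side of the theorem and $b_Q$ an average of $|W^{1/p}\vec{h}|$ in the dual space.

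Finally I would apply the scalar sparse Coifman--Fefferman inequality direction-wise. For each unit vector $\vec{e}\in\mathbb{R}^n$ the scalar function $w_{\vec{e}}(x):=|W^{1/p}(x)\vec{e}|^p$ is an $A_\infty$ weight with $[w_{\vec{e}}]_{A_\infty}\leq[W]_{A^{sc}_{\infty,p}}$ by \eqref{eq:AinftyMat}. Combining with the scalar sparse-to-maximal passage from \cite{OCPR} for (1), from \cite{LPRRR} for (2) (the Orlicz bookkeeping there costing exactly one extra power of $[w_{\vec{e}}]_{A_\infty}$, which accounts for the jump of the exponent from $1/p$ to $1+1/p$), and the same mechanism with $M_r$ in place of $M$ for (3), bounds the sparse sum by the desired right-hand side times $[W]_{A^{sc}_{\infty,p}}^{1/p}$, $[W]_{A^{sc}_{\infty,p}}^{1+1/p}$, or $[W]_{A^{sc}_{\infty,p/r}}^{1/p}$, respectively. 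The main obstacle is this last step: one must verify that the supremum over directions $\vec{e}$ can be taken outside the sparse sum without losing structure, and, for (2), that the $L\log L$ averages coming from \cite{DiPHL} interact correctly with the reducing-operator formalism; the $p=2$ sketch in \cite[Remark 6.6]{DiPHL} is the guiding example, and the remaining work is a careful extension to all $p>1$ and to the other two families of kernels.
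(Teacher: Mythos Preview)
Your overall architecture---dualize, invoke the appropriate convex body sparse bound, and insert the reducing matrices $\mathcal{W}_{p,Q}$ (resp.\ $\mathcal{W}_{p/r,Q}^{1/r}$)---matches the paper exactly. The divergence, and the gap, is in the last step.

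The ``direction-wise scalar Coifman--Fefferman'' you propose does not go through as stated, and the obstacle you flag is real rather than cosmetic. After inserting the reducing matrices the dual average is $b_Q=\langle |\mathcal{W}_{p,Q}^{-1}W^{1/p}\vec h|\rangle_Q$; this is \emph{not} of the form $\langle w_{\vec e}^{1/p}\,|h|\rangle_Q$ for any fixed direction $\vec e$, because $\vec h(x)$ varies with $x$. There is no way to pull a single $\vec e$ out of the sparse sum and then invoke the scalar $A_\infty$ results of \cite{OCPR} or \cite{LPRRR} on the weight $|W^{1/p}\vec e|^p$. The paper bypasses this entirely: it first applies H\"older at the level of the sparse sum (splitting into an $\ell^p$ piece that immediately gives the weighted maximal function on the right-hand side, and an $\ell^{p'}$ piece in $\vec g$), and then, inside each cube, applies H\"older once more to separate $|\mathcal{W}_{p,Q}^{-1}W^{1/p}|_{op}$ from $|\vec g|$. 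The operator norm factor is handled by the matrix reverse H\"older lemma of Section~\ref{sec:defsLemmata} (that lemma \emph{is} proved direction-wise, but at the level of a single average of an operator norm, where the difficulty you raise does not arise). What remains is $\|M_{(rp)'}(|\vec g|)\|_{L^{p'}}$, whose quantitative bound via Lemma~\ref{lem:param1} produces exactly $[W]_{A_{\infty,p}^{sc}}^{1/p}$.

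A second, smaller point: for $T_\Omega$ the paper does not use an $L\log L$ average as you suggest. It uses Theorem~\ref{Thm:Rough}, i.e.\ an $L^1\times L^s$ sparse form with the prefactor $s'$, and then \emph{chooses} $s$ so that $s'\simeq[W]_{A_{\infty,p}^{sc}}$ while $srp$ still lies in the reverse H\"older range. That choice of $s$ is what produces the extra full power of $[W]_{A_{\infty,p}^{sc}}$ in case~(2); no Orlicz bookkeeping is needed.
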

\begin{rem}
At this point we would like to note that even though the dependence
may look better than in the scalar case, the maximal operator in the
right hand side is a weighted maximal operator, in contrast with the situation in the classical setting. Hence, in some sense, the ``missing'' piece of constant is in disguise ``inside'' the maximal operator. In any case, in this setting, due to the non-linearity of the maximal function, that leads to study weighted versions of it, those inequalities seem a suitable candidate.
\end{rem}
\begin{rem}
The estimate in the case of $L^{r'}$-H\"ormander operators in terms of an $L^r$ maximal function seems the best one may expect in view of the fact that this is the same that happens with scalar $L^{r'}$-H\"ormander operators. We remit the reader to \cite{MPTG} for more details.
\end{rem}

In the scalar case, Muckenhoupt \cite{M} showed that $A_{\infty}$ is not necessary
for the Coifman-Fefferman estimate to hold. He showed that if $p>1$ and
\eqref{eq:CF} holds for the Hilbert transform and a certain weight $w$ then there exists $c,\delta>0$ such that for every cube $Q$ and every measurable subset $E\subset Q$ we have that
\[
w(E)\leq c\left(\frac{|E|}{|Q|}\right)^\delta\int_{\mathbb{R}^n}M(\chi_Q)^pw.
\]
In the 80s Sawyer \cite{S} extended that result to higher dimensions and also showed that for $p>1$ the $C_{p+\varepsilon}$ was sufficient for \eqref{eq:CF} to hold. It is still unknown whether $C_p$ is sufficient for \eqref{eq:CF} to hold.

In the last years several advances have been made, for instance the extension in \cite{CLPRR} to the full range $0<p<\infty$ and other operators relying upon \cite{Y,LeCF1} and sparse domination techniques, the characterization of the good weights for the weak type counterpart of \eqref{eq:CF} in \cite{LeCF2}, and the quantitative results introduced in \cite{C} and further explored in \cite{CLRT}.

In \cite[Theorem 2.5]{C} the following reverse H\"older type inequality was settled for $C_p$ weights. There exists $r>1$ such that
\[\left(\frac{1}{|Q|}\int_Qw^r\right)^{\frac{1}{r}}\lesssim\frac{1}{|Q|}\int_{\mathbb{R}^d}M(\chi_Q)^pw
\]
In the matrix setting the right hand side of that expression seems difficult to ``reproduce''. We recall that the matrix $A_\infty$ conditions are introduced via scalar $A_\infty$ and frequently arise in reverse H\"older inequalities. Taking that into account a definition in terms of a certain reverse H\"older inequality seems reasonable. Those ideas motivate the following definition. Given $1\leq p<q$ we say that $W\in C_{p,q}$ if there exists $\gamma>1$
such that
\[
\langle|\mathcal{W}{}_{p,Q}^{-1}W^{\frac{1}{p}}|_{op}^{\gamma p}\rangle_{Q}^{\frac{1}{\gamma}}\lesssim\frac{1}{|Q|}\int_{\mathbb{R}^{n}}M(\chi_{Q})^{q}.
\]
We remit the reader to Section \ref{sec:defsLemmata} for the precise definition of $\mathcal{W}_{p,Q}$.

\begin{thm}
Given $1< p<q$ 
\begin{enumerate}
\item If $W\in C_{p,q}$ and $T$ is a Calderón-Zygmund operator
\[
\||W^{\frac{1}{p}}T(W^{-\frac{1}{p}}\vec{f})|\|_{L^p(\mathbb{R}^d)}\lesssim\left\Vert \sup_{Q}\frac{1}{|Q|}\int_{Q}|\mathcal{W}{}_{p,Q}W^{-\frac{1}{p}}\vec{f}|\right\Vert _{L^{p}(\mathbb{R}^{d})}.
\]
\item If $W\in C_{p,q}$ and $\Omega\in L^{\infty}(\mathbb{S}^{d-1})$ with
$\int_{\mathbb{S}^{d-1}}\Omega=0$ then
\[
\||W^{\frac{1}{p}}T_\Omega(W^{-\frac{1}{p}}\vec{f})|\|_{L^p(\mathbb{R}^d)}\lesssim\left\Vert \sup_{Q}\frac{1}{|Q|}\int_{Q}|\mathcal{W}{}_{p,Q}W^{-\frac{1}{p}}\vec{f}|\right\Vert _{L^{p}(\mathbb{R}^{d})}.
\]
\item If $r>1$ $W\in C_{\frac{p}{r},q}$ and $T$ is a $L^{r'}$-Hörmander
operator and $p>r$ then
\[
\||W^{\frac{1}{p}}T(W^{-\frac{1}{p}}\vec{f})|\|_{L^p(\mathbb{R}^d)}\lesssim\left\Vert \sup_{Q}\left(\frac{1}{|Q|}\int_{Q}|\mathcal{W}{}_{\frac{p}{r},Q}^{\frac{1}{r}}W^{-\frac{1}{p}}\vec{f}|^{r}\right)^{\frac{1}{r}}\right\Vert _{L^{p}(\mathbb{R}^{d})}.
\]
\end{enumerate}
\end{thm}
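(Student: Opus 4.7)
The plan is to reduce all three statements to a single scheme that mirrors the scalar Coifman--Fefferman argument under the $C_p$ condition of Carmona \cite{C}, transported to the matrix setting through the convex body sparse domination results that the paper establishes for each of the operator classes. For parts (1) and (2), the corresponding convex body sparse operator is built from $L^1$ cube-averages, while in (3) the Hörmander case produces a convex body sparse operator built from $L^r$ averages. Thus in each case it is enough to prove the bound with $T$ (or $T_\Omega$) replaced by the associated sparse operator $\mathcal{A}_{\mathcal{S}}$ acting on $W^{-1/p}\vec f$.

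First I would dualize:
\[
\||W^{\frac{1}{p}}\mathcal{A}_{\mathcal{S}}(W^{-\frac{1}{p}}\vec{f})|\|_{L^{p}(\mathbb{R}^{d})}=\sup_{\vec{g}}\int_{\mathbb{R}^{d}}\langle W^{\frac{1}{p}}\mathcal{A}_{\mathcal{S}}(W^{-\frac{1}{p}}\vec{f}),\vec{g}\rangle\,dx,
\]
with the supremum over $\vec g$ with $\||\vec{g}|\|_{L^{p'}}\leq 1$. Expanding $\mathcal{A}_{\mathcal S}$ as a sum over sparse cubes $Q$ and inserting $\mathcal{W}_{p,Q}\mathcal{W}_{p,Q}^{-1}$ in the right places turns each cube contribution into a product of two factors: one purely involving the weight, of the form $\langle |\mathcal{W}_{p,Q}^{-1}W^{1/p}|_{op}^{s}\rangle_Q^{1/s}$ for a suitable $s$, and a second factor that is precisely the weighted cube-average appearing on the right-hand side of the theorem. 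In parts (1) and (2) the second factor is $\langle |\mathcal{W}_{p,Q}W^{-1/p}\vec f|\rangle_Q$ and in (3) it is $\langle |\mathcal{W}_{p/r,Q}^{1/r}W^{-1/p}\vec f|^r\rangle_Q^{1/r}$, matching the maximal operator inside the right-hand norm.

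Next I would apply Hölder's inequality in the $Q$-average with the conjugate exponent $\gamma$ supplied by the $C_{p,q}$ hypothesis, so that the weight factor above becomes $\langle |\mathcal{W}_{p,Q}^{-1}W^{1/p}|_{op}^{\gamma p}\rangle_Q^{1/\gamma}$ (respectively with $p/r$ in (3)), which by the very definition of $C_{p,q}$ is controlled by $|Q|^{-1}\int_{\mathbb{R}^{d}} M(\chi_Q)^{q}$. After this step I would invoke the Sawyer-type rearrangement: using the pairwise disjoint principal sets $E_Q\subset Q$ from the sparseness of $\mathcal{S}$ and Fubini on $\sum_{Q\in\mathcal S}\chi_{E_Q}(x) M(\chi_Q)(y)^q$, the sparse sum collapses into an integral of the weighted maximal operator appearing in the right-hand side paired with $|\vec g|$, and taking the supremum over $\vec g$ closes the estimate. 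The Hörmander case is handled identically after replacing the $L^1$ cube-average by its $L^r$ analogue throughout.

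The main obstacle will be the bookkeeping in the Hölder step: one must choose the factorization $W^{1/p}=(\mathcal{W}_{p,Q})(\mathcal{W}_{p,Q}^{-1}W^{1/p})$ so that the surviving $\mathcal{W}_{p,Q}$ factor lines up with the maximal operator on the right-hand side of each of (1)--(3), and so that the remaining operator-norm factor is raised exactly to the power $\gamma p$ (respectively $\gamma p/r$) that the $C_{p,q}$ inequality can absorb. The non-commutativity makes this step delicate, but it is the only place where matrix considerations truly matter; once the powers match, the Sawyer-type rearrangement and the cancellation against $\|M(\chi_Q)^q\|_{L^1}$ proceed as in the scalar setting and no further loss in the constant arises.
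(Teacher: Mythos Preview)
Your overall scheme matches the paper: dualize against $\vec g$, pass to the convex body sparse form, insert $\mathcal{W}_{p,Q}\mathcal{W}_{p,Q}^{-1}$ (or $\mathcal{W}_{\frac{p}{r},Q}^{1/r}$ in case (3)), and apply H\"older with exponent $\gamma p$ so that the $C_{p,q}$ hypothesis absorbs the weight factor. The place where your write-up becomes unclear is the last step. Your sentence ``Fubini on $\sum_{Q\in\mathcal S}\chi_{E_Q}(x)M(\chi_Q)(y)^q$'' does not correspond to any recognizable manoeuvre: after the $C_{p,q}$ step the factor $\bigl(|Q|^{-1}\int M(\chi_Q)^q\bigr)^{1/p}$ sits under a $1/p$-th power, so no Fubini between $x$ and $y$ is available, and it is not evident what ``Sawyer-type rearrangement'' you intend.

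The paper handles this differently. It first separates the $\vec f$ and $\vec g$ contributions by H\"older over $Q$ with exponents $p,p'$; the $\vec f$-sum is controlled directly by the Carleson embedding, yielding the weighted maximal function on the right-hand side. For the $\vec g$-sum the paper dualizes a \emph{second} time, against a scalar $h\in L^p$, and only then applies H\"older inside the cube together with the $C_{p,q}$ inequality. After a further H\"older over $Q$ one is left with
\[
\Bigl(\sum_{Q\in\mathcal S}\int_{\mathbb R^d}M(\chi_Q)^q\langle h\rangle_Q^{\,p}\Bigr)^{1/p},
\]
and this is bounded by $\|h\|_{L^p}$ by invoking \cite[Corollary~3.7]{CLRT} with $w=1$ --- a Carleson-type embedding that accounts for the tails of $M(\chi_Q)^q$ over a sparse family. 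This is the ingredient missing from your outline.

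That said, a shorter route is available and may be what you are gesturing at with ``cancellation against $\|M(\chi_Q)^q\|_{L^1}$'': since $q>1$, one has $|Q|^{-1}\int_{\mathbb R^d}M(\chi_Q)^q\simeq c_{q,d}$, so the $C_{p,q}$ condition reduces to a uniform bound $\langle|\mathcal{W}_{p,Q}^{-1}W^{1/p}|_{op}^{\gamma p}\rangle_Q\lesssim 1$. With that observation the remaining sum is an ordinary sparse bilinear form with averages $\langle|\mathcal{W}_{p,Q}W^{-1/p}\vec f|\rangle_Q$ and $\langle|\vec g|^{(\gamma p)'}\rangle_Q^{1/(\gamma p)'}$, and the standard disjoint-$E_Q$ argument plus the $L^{p'}$ boundedness of $M_{(\gamma p)'}$ (valid because $(\gamma p)'<p'$) closes the estimate without any appeal to \cite{CLRT}. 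If this is your intended argument, say so explicitly instead of invoking Fubini; as written, the key reduction is hidden.
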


\subsection{Endpoint estimates}\label{sec:endpoint}
The study of endpoint estimates for vector valued extensions was initiated in \cite{CUIMPRR}. It is not clear how to make sense of a matrix weight in the role of a density. Note that to study strong type weighted inequalities such as 
\[\|T(\vec{f})\|_{L^p(W)}\lesssim c_W \|\vec{f}\|_{L^p(W)}\]
we usually rewrite the problem as
\[\||W^{\frac{1}{p}}T(W^{-\frac{1}{p}}\vec{f})|\|_{L^p}\lesssim c_W \|\vec{f}\|_{L^p}.\]
Furthermore in the case of the maximal function, since it is not linear, a usual choice is to consider a weighted version of such operator and to study its unweighted estimates. Hence in the case of  endpoint estimates it seems reasonable to study unweighted estimates of ``weighted'' operators, namely, to study
\[\||W^{\frac{1}{p}}T(W^{-\frac{1}{p}}\vec{f})|\|_{L^{1,\infty}}\lesssim c_W \|\vec{f}\|_{L^1}.\]
Quantitative estimates in this direction still seem to be far from from the optimal estimates known in the scalar case. The current record for Calder\'on-Zygmund operators in terms of the $A_1$ constant is $[W]^2_{A_1}$ (see  \cite{CUIMPRR}) while in the scalar setting the sharp bound has already been achieved and is $[w]_{A_1}\log(e+[w]_{A_1})$ (see \cite{LOP,LOP1,LNO}).

Before presenting our results for rough singular integrals and $L^{r'}$-H\"ormander operators, we would like to note that in the scalar setting this kind of estimates are the so called mixed weak type inequalities. First results in this direction are due to Muckenhoupt and Wheeden \cite{MW} and Sawyer \cite{S2} and a number of contributions have been made in the last years. We remit, for instance, to \cite{Be} and to \cite{LiOP} and the references therein for some of those contributions.

Now we present our results.

\begin{thm}\label{eq:RoughEndpoint}
If $\Omega\in L^{\infty}(\mathbb{S}^{d-1})$ with $\int_{\mathbb{S}^{d-1}}\Omega=0$,
then 
\[
\||W(x)T_{\Omega}(W^{-1}f)(x)|\|_{L^{1,\infty}}\lesssim\|\Omega\|_{L^{\infty}(\mathbb{S}^{d-1})}[W]_{A_{1}}[W]_{A_{\infty,1}^{sc}}\max\left\{ \log\left([W]_{A_{1}}+e\right),[W]_{A_{\infty,1}^{sc}}\right\} \|f\|_{L^{1}}.
\]
\end{thm}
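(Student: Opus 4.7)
The plan is to derive this endpoint estimate from the convex body sparse domination for $T_\Omega$ established earlier in the paper, in the spirit of \cite{CUIMPRR} (who handled Calder\'on--Zygmund operators and obtained a quadratic $[W]_{A_1}^2$ dependence) combined with the $L\log L$ machinery used in the scalar rough case of \cite{LPRRR}. After the sparse reduction the task becomes a weighted weak-type $(1,1)$ control of a sparse-type form of shape $\sum_{Q\in\mathcal{S}}\langle|\mathcal{W}_{1,Q}W^{-1}\vec f|\rangle_{Q}\chi_{Q}$ tested against $|W\,\cdot\,|$, where the reducing operators $\mathcal{W}_{1,Q}$ play the role of matrix averages.

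The core step is a Calder\'on--Zygmund decomposition of $\vec f$ at level $\lambda$ adapted to the scalar function $|W|_{op}$. The matrix $A_1$ condition \eqref{eq:A1} allows one to replace $|W(x)|_{op}$ on each selected cube by the reducing operator norm $|\mathcal{W}_{1,Q}|_{op}$ at the cost of $[W]_{A_1}$, producing the standard splitting $\vec f=\vec g+\sum_j\vec b_j$ with $\supp\vec b_j\subset Q_j$, $\int_{Q_j}\vec b_j=0$, the $Q_j$ pairwise disjoint, and $\sum_j|Q_j|\lesssim [W]_{A_1}\lambda^{-1}\|f\|_{L^1}$. The set $E=\bigcup_j 3Q_j$ is declared exceptional, and on its complement the contributions of $\vec g$ and of $\sum_j\vec b_j$ are estimated separately.

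For the good part the plan is to invoke Theorem~\ref{Thm:RoughAp} with an exponent $p$ slightly larger than $1$. Using $[W]_{A_p}\lesssim [W]_{A_1}$, the monotonicity $[W]_{A_{\infty,p}^{sc}}\leq [W]_{A_{\infty,1}^{sc}}$, and an analogous control of the dual scalar $A_\infty$ constant of $W^{-p'/p}$ under the matrix $A_1$ hypothesis, one bounds $\|T_\Omega(W^{-1}\vec g)\|_{L^p(W)}$ by a product behaving like $[W]_{A_1}^{1/p}[W]_{A_{\infty,1}^{sc}}^{1+1/p'}$ times a factor growing like $(p-1)^{-1}$ that comes from the $L\log L$ sparse structure of rough singular integrals. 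The standard choice $p-1\sim 1/\log(e+[W]_{A_1})$ then converts $(p-1)^{-1}$ into $\log(e+[W]_{A_1})$, which is the origin of one of the two entries in the $\max$; the other entry $[W]_{A_{\infty,1}^{sc}}$ dominates in the regime where it exceeds this logarithm, and in that regime no optimization in $p$ is necessary.

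The bad part is treated by plugging the $\vec b_j$ directly into the convex body sparse form and splitting the sum over $Q\in\mathcal{S}$ according to whether $Q\subset Q_j$ (absorbed into $E$) or $Q\supset Q_j$ (handled using the cancellation $\int\vec b_j=0$ and a telescoping argument along the sparse tree). The main obstacle I expect to encounter is that the scalar $A_\infty$ reverse H\"older inequalities for the weights $|W\vec e|$ must be applied uniformly in $\vec e\in\mathbb{S}^{n-1}$ while being threaded through the reducing operators $\mathcal{W}_{1,Q}$; it is here that the factor $[W]_{A_{\infty,1}^{sc}}$ enters via its definition \eqref{eq:AinftyMat}, and it is the step that most genuinely departs from both the Calder\'on--Zygmund matrix argument of \cite{CUIMPRR} and the scalar rough estimate of \cite{LPRRR}. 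Combining the good and bad estimates at the optimal $p$ produces the stated bound.
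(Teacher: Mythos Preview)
Your proposed route via Calder\'on--Zygmund decomposition of $\vec f$ and extrapolation from Theorem~\ref{Thm:RoughAp} at $p\downarrow1$ is genuinely different from the paper's argument and contains a real gap. The paper performs no good/bad splitting: it normalizes $\lambda=\|\vec f\|_{L^1}=\|\Omega\|_{L^\infty}=1$, sets $G=\{|WT_\Omega(W^{-1}\vec f)|>1\}\setminus\{M|\vec f|>1\}$, and bounds $|G|$ by testing the sparse form of Theorem~\ref{Thm:Rough} against $\vec g=\chi_G\sum_i e_i$. Choosing the exponent $r\simeq1+c[W]_{A_{\infty,1}^{sc}}^{-1}$, applying the matrix $A_1$ condition and reverse H\"older directly inside each convex-body average, reduces everything to the purely scalar sum $[W]_{A_1}[W]_{A_{\infty,1}^{sc}}\sum_{Q\in\mathcal S}\langle|\vec f|\rangle_Q\langle\chi_G\rangle_Q^{1/(rs')}|Q|$; a two-parameter dyadic level-set decomposition of that sum, together with the absorption of $\tfrac12|G|$, then yields the $\max$ in the statement.

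The step in your plan that fails is the claimed ``analogous control of the dual scalar $A_\infty$ constant of $W^{-p'/p}$.'' Already in the scalar case, $w\in A_1$ only gives $[w^{1-p'}]_{A_\infty}\lesssim[w]_{A_p}^{1/(p-1)}\leq[w]_{A_1}^{1/(p-1)}$, which explodes as $p\to1$; feeding such a bound into the factor $[W^{-p'/p}]_{A^{sc}_{\infty,p'}}^{1/p}$ of Theorem~\ref{Thm:RoughAp} produces $[W]_{A_1}^{c/(p-1)}$, and your choice $p-1\sim1/\log(e+[W]_{A_1})$ turns this into a super-polynomial in $[W]_{A_1}$, not a logarithm. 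There is also a structural mismatch you did not address: the weak-type level set involves $|WT_\Omega(W^{-1}\vec g)|$, whereas Chebyshev against Theorem~\ref{Thm:RoughAp} controls $|W^{1/p}T_\Omega(\cdot)|$, so you would actually need $W^p\in A_p$, which does not follow from $W\in A_1$. Finally, your bad-part sketch cannot work: the convex body average $\langle\langle\,\cdot\,\rangle\rangle_{1,Q}$ is a supremum over all $L^\infty$ test functions and is blind to the single cancellation $\int\vec b_j=0$, while the kernel of $T_\Omega$ has no smoothness to exploit directly.
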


\begin{thm}\label{eq:HormanderEndpoint}
Let $W\in A_1$ and let $T$ be a $L^{r'}$-H\"ormander operator. Then
\[\left\Vert |W^{\frac{1}{r}}(x)T(W^{-\frac{1}{r}}\vec{f})(x)|\right\Vert _{L^{r,\infty}(\mathbb{R}^{d})}\lesssim[W]_{A_{1}}^{\frac{1}{r}}[W]_{A_{\infty,1}^{sc}}\||\vec{f}|\|_{L^{r}}\]
\end{thm}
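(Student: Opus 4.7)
I would follow the matrix analogue of the scalar scheme used in \cite{LPRRR} for $L^{r'}$-H\"ormander operators, combined with the matrix $A_{1}$ machinery developed in \cite{IPRR} for endpoint estimates. The starting point is the convex body sparse domination for $L^{r'}$-H\"ormander operators that we establish earlier in the paper. Pre-multiplying by $W^{1/r}(x)$ and making the change of unknown $\vec{f}\mapsto W^{-1/r}\vec{f}$, the statement reduces to an $L^{r,\infty}(\mathbb{R}^{d})$ bound for a scalar sparse form of the shape
\[
\mathcal{A}\vec{f}(x)=\sum_{Q\in\mathcal{S}}\left|W^{\frac{1}{r}}(x)\,\mathcal{W}_{r,Q}^{-1}\right|_{\text{op}}\left\langle\left|\mathcal{W}_{r,Q}W^{-\frac{1}{r}}\vec{f}\right|^{r}\right\rangle_{Q}^{\frac{1}{r}}\chi_{Q}(x),
\]
where $\mathcal{W}_{r,Q}$ are the matrix reducing operators introduced in Section \ref{sec:defsLemmata}.

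Next, fix $\lambda>0$ and perform a Calder\'on--Zygmund decomposition of $|\vec{f}|^{r}$ at height $\lambda^{r}$, producing a pairwise disjoint family of stopping cubes $\{P_{k}\}$ and a splitting $\vec{f}=\vec{g}+\vec{b}$ with $|\vec{g}|\lesssim\lambda$ pointwise and $\vec{b}=\sum_{k}\vec{b}_{k}$, each $\vec{b}_{k}$ supported in $P_{k}$ and carrying the appropriate vanishing-mean cancellation in the convex body sense. Setting $\Omega^{*}=\bigcup_{k}3P_{k}$, I would control
\[
\bigl|\{x\,:\,|\mathcal{A}\vec{f}(x)|>\lambda\}\bigr|\leq|\Omega^{*}|+\bigl|\{|\mathcal{A}\vec{g}|>\lambda/2\}\bigr|+\bigl|\{x\notin\Omega^{*}:|\mathcal{A}\vec{b}(x)|>\lambda/2\}\bigr|.
\]
The first term is $\lesssim\lambda^{-r}\|\vec{f}\|_{L^{r}}^{r}$ by maximality, and the contribution of the good part is handled by Chebyshev together with an unweighted strong-type $(r,r)$ bound for $\mathcal{A}$, the factor $[W]_{A_{1}}^{1/r}[W]_{A_{\infty,1}^{sc}}$ emerging from an $A_{1}$--$A_{\infty}$ splitting of the sparse estimate.

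The main obstacle is the analysis of $\mathcal{A}\vec{b}$ outside $\Omega^{*}$. In the scalar setting this is carried out by exploiting the mean-zero property of each $\vec{b}_{k}$ together with the $L^{r'}$-H\"ormander condition after the centered subtraction $K(x,y)-K(x,c_{P_{k}})$. In the matrix convex-body setting this cancellation has to be encoded inside the sparse form itself: for each $Q\in\mathcal{S}$ that strictly contains $P_{k}$, the contribution of $\vec{b}_{k}$ to $\langle|\mathcal{W}_{r,Q}W^{-1/r}\vec{b}_{k}|^{r}\rangle_{Q}$ should be controlled by an oscillation absorbing the matrix reducing constants associated with $P_{k}$. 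Combining the pointwise inequality $|W^{1/r}(x)\mathcal{W}_{r,P_{k}}^{-1}|_{\text{op}}\lesssim[W]_{A_{1}}^{1/r}$, valid for $x\in P_{k}$, with the scalar reverse H\"older estimate supplied by $[W]_{A_{\infty,1}^{sc}}$, one should then be able to sum the annular $L^{r'}$-H\"ormander tails and the geometric series in $k$, producing precisely the claimed dependence.
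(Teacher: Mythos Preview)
Your approach has a genuine gap in the treatment of the bad part. Once you have passed to the sparse form $\mathcal{A}$, the operator is purely positive: the averages $\langle|\mathcal{W}_{r,Q}W^{-1/r}\vec{b}_{k}|^{r}\rangle_{Q}$ involve absolute values and see no cancellation whatsoever from any mean-zero property of $\vec{b}_{k}$. The $L^{r'}$-H\"ormander kernel condition has already been consumed in deriving the sparse domination (Theorem~\ref{Thm:Horm}); there are no ``annular tails'' or ``centered subtractions $K(x,y)-K(x,c_{P_{k}})$'' left to exploit at the sparse level. The Calder\'on--Zygmund scheme you sketch is the one used to prove weak $(1,1)$ for $T$ itself, not for positive sparse forms. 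There is a parallel problem with the good part: the sparse operator built from $L^{r}$ averages is \emph{not} strong-type $(r,r)$, even unweighted, so the bound you invoke for $\mathcal{A}\vec{g}$ is unavailable; indeed, if it held, Chebyshev would already give a strong-type endpoint, which is more than what is being claimed.

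The paper proceeds entirely differently. It uses the Lorentz duality $(L^{r,\infty})^{*}=L^{r',1}$ and applies the \emph{bilinear} sparse domination directly to the pairing $\langle T(W^{-1/r}\vec{f}),W^{1/r}\vec{g}\rangle$. The convex-body averages are opened with the $A_{1}$ reducing matrices $\mathcal{W}_{1,Q}$ (not $\mathcal{W}_{r,Q}$): Lemma~\ref{lem:Bownik} and the $A_{1}$ condition give $|\mathcal{W}_{1,Q}^{1/r}W^{-1/r}|_{\mathrm{op}}\lesssim|\mathcal{W}_{1,Q}W^{-1}|_{\mathrm{op}}^{1/r}\le[W]_{A_{1}}^{1/r}$ on $Q$, while a reverse H\"older step with exponent $\alpha=1+c_{d}/[W]_{A_{\infty,1}^{sc}}$ handles $|\mathcal{W}_{1,Q}^{-1}W|_{\mathrm{op}}^{1/r}$. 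This reduces everything to the scalar quantity
\[
[W]_{A_{1}}^{1/r}\Bigl\|\sum_{Q\in\mathcal{S}}\langle|\vec{f}|^{r}\rangle_{Q}^{1/r}\chi_{Q}\Bigr\|_{L^{r,\infty}}\bigl\|M_{(r\alpha)'}|\vec{g}|\bigr\|_{L^{r',1}},
\]
and the factor $[W]_{A_{\infty,1}^{sc}}$ enters through the Lorentz bound $\|M_{(r\alpha)'}\|_{L^{r',1}\to L^{r',1}}\lesssim\bigl(r'/(r\alpha)'\bigr)'$ from \cite{LPRRR} together with Lemma~\ref{lem:param1}. No Calder\'on--Zygmund decomposition is used.
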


The remainder of the paper is organized as follows. Section \ref{sec:Convex} is devoted to the presentation of convex body domination results for rough singular integrals and $L^{r'}$-H\"ormander operators. In Section \ref{sec:defsLemmata} we provide some further definitions and lemmata. The remainder of the sections are devoted to settle the main results. 

\section{Convex body domination results}\label{sec:Convex}

We recall that a family of cubes $\mathcal{S}$ is $\eta$-sparse for some $\eta\in(0,1)$ if for each $Q\in\mathcal{S}$ there exists $E_Q\subset Q$ such that the sets $E_Q$ are pairwise disjoint and $\eta|Q|\leq |E_Q|$. As it was shown in \cite{LN} a family $\mathcal{S}$ is $\eta$-sparse if and only if $\mathcal{S}$ is $\frac{1}{\mathcal{\eta}}$-Carleson that is, if for each $Q\in\mathcal{S}$
\[\sum_{P\subseteq Q,P\in\mathcal{S}}|P|\leq\frac{1}{\eta}|Q|.\]

Convex body domination was introduced by Nazarov, Petermichl, Treil and Volberg who settled in \cite{NPTV} a ``pointwise'' domination result for Calder\'on-Zygmund operators (see \cite{CDiPOu} for a ``bilinear'' version of that result). Those techniques where also explored for commutators in \cite{CUIMPRR,IPRR,IPT} and the idea of relying upon convex bodies to control maximal rough singular integrals was exploited by Di Plinio, Hyt\"onen and Li \cite{DiPHL}. We shall begin borrowing some definitions from the latter.

Let $1\leq p < \infty$. For every $|\vec{f}| \in L^{p}_{Loc}(\mathbb{R}^d)$ and each cube $Q$ in $\mathbb{R}^{d}$, we define
\begin{eqnarray}
\langle\langle\vec{f}\rangle\rangle_{p,Q}=\left\{ \frac{1}{|Q|}\int_{Q}\vec{f}\varphi dx\,:\,\varphi:Q\rightarrow\mathbb{R},\,\varphi\in B_{L^{p'}(Q)}\right\} \label{eq:2}
\end{eqnarray}
where
 $$B_{L^{p'}(Q)}=\left\{ \phi\in L^{p'}(Q)\,:\,\left(\dfrac{1}{|Q|}\int_Q|\varphi|^{p'}\right)^{\frac{1}{p'}}\leq1\right\}. $$
Note that each set $\langle\langle\vec{f}\rangle\rangle_{p,Q}$ is a compact, convex and symmetric set.

\begin{thm}
\label{Thm:Rough}Let $\Omega\in L^\infty(\mathbb{S}^{d-1})$ with $\int_{\mathbb{S}^{d-1}}\Omega=0$ and $r>1$. Then we have that for each $|\vec{f}|\in L^1(\mathbb{R}^d)$ with compact support and each $|\vec{g}|\in L^r(\mathbb{R}^d)$ there exists a sparse family $\mathcal{S}$ such that 
\begin{equation}
 \int_{\mathbb{R}^d}\left|\langle T_\Omega\vec{f},\vec{g}\rangle\right|\leq c_{n,d}\|\Omega\|_{L^\infty(\mathbb{S}^{d-1})}r' \underset{Q\in \mathcal{S}}{\sum} \langle\langle\vec{f}\rangle\rangle_{1,Q}\langle\langle\vec{g}\rangle\rangle_{r,Q}|Q|\label{eq:Rough}
\end{equation}
\end{thm}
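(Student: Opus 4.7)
\medskip

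\noindent\textbf{Proof plan.} The plan is to combine the sparse-domination philosophy for rough singular integrals, which in the scalar case goes back to the Conde-Alonso--Culiuc--Di Plinio--Ou decomposition and is revisited in \cite{Le,Li}, with the convex-body selection mechanism introduced in \cite{NPTV} and already adapted to rough kernels (in its maximal form) in \cite{DiPHL}. Concretely, \eqref{eq:Rough} will follow from an iterated principal-cube construction. Fix a dyadic lattice containing a cube $Q_{0}\supset\supp\vec f$; by standard localisation the sparse family can be built inside $Q_{0}$. The main step is the following recursion: for every $Q\in\D$, produce pairwise disjoint stopping children $\{P_{j}\}\subset Q$ with $\sum_{j}|P_{j}|\leq\tfrac12|Q|$ such that
\[
\int_{Q\setminus\bigcup_{j}P_{j}}\bigl|\langle T_{\Omega}(\vec f\chi_{3Q}),\vec g\chi_{Q}\rangle\bigr|\,dx\;\lesssim\;\|\Omega\|_{L^{\infty}(\mathbb{S}^{d-1})}\,r'\,\langle\langle\vec f\rangle\rangle_{1,3Q}\,\langle\langle\vec g\rangle\rangle_{r,Q}\,|Q|,
\]
while the non-local piece $T_{\Omega}(\vec f\chi_{(3Q)^{c}})$ on $Q$ is absorbed telescopically via standard $L^{1}$-off-diagonal bounds for rough kernels. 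Collecting the principal cubes obtained from this recursion then yields the required $\frac12$-sparse family $\mathcal S$ and, after summation, \eqref{eq:Rough}.

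\medskip

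\noindent\textbf{Stopping children.} The cubes $\{P_{j}\}$ are chosen as the maximal dyadic subcubes of $Q$ on which at least one of the following fails: the $L^{1}$-average of $|\vec f|$ remains within a fixed multiple of its value on $3Q$, the $L^{r}$-average of $|\vec g|$ remains within a fixed multiple of its value on $Q$, or a suitable maximal truncation of $T_{\Omega}(\vec f\chi_{3Q})$ satisfies its expected endpoint bound. With the multiples chosen large enough, the Hardy--Littlewood inequality together with the weak-type endpoint for the rough maximal singular integral (see \cite{Le,Li}) yields $\sum_{j}|P_{j}|\leq\tfrac12|Q|$.

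\medskip

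\noindent\textbf{Bilinear estimate on the good set.} On $Q\setminus\bigcup_{j}P_{j}$ one splits $T_{\Omega}=\sum_{k\geq 0}T_{\Omega}^{(k)}$ into single-scale annular pieces and applies, for each $k$, a bilinear $L^{1}\times L^{r}\to L^{1}$ improving estimate for $T_{\Omega}^{(k)}$ with a geometric tail in $k$; summation produces the factor $\|\Omega\|_{L^{\infty}(\mathbb{S}^{d-1})}\,r'$, the $r'$ arising via the usual interpolation with the $L^{p}$-bounds of \cite{Le,Li} as $p\downarrow 1$. The passage from scalar to the convex-body formulation follows the strategy of \cite{DiPHL}: for every pair of scalar testing functions $\varphi\in B_{L^{\infty}(3Q)}$ and $\psi\in B_{L^{r'}(Q)}$, the scalar pairing $\langle T_{\Omega}(\vec f\varphi),\vec g\psi\rangle$ is the Euclidean inner product of the selected representatives $\tfrac{1}{|3Q|}\int\vec f\varphi\in\langle\langle\vec f\rangle\rangle_{1,3Q}$ and $\tfrac{1}{|Q|}\int\vec g\psi\in\langle\langle\vec g\rangle\rangle_{r,Q}$, and a supremum over all admissible $(\varphi,\psi)$ delivers precisely the convex-body product on the right-hand side.

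\medskip

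\noindent\textbf{Main obstacle.} The delicate point is the single-scale bilinear improving estimate with the correct geometric decay in $k$ and the correct $r$-dependence after summation. The absence of any pointwise regularity of $\Omega$ forbids the Hörmander-type arguments from \cite{NPTV}; instead one must invoke a Seeger-style dyadic decomposition of the rough kernel together with a Calderón--Zygmund argument, and push it through against a vector-valued $\vec g$ tested on convex bodies. This is essentially the technical heart of \cite{DiPHL} in its maximal version; the task here is to re-cast it as a bilinear domination so that the pairing with $\vec g$ and the sparse sum \eqref{eq:Rough} are available directly rather than through a maximal intermediate.
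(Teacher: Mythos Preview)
Your outline differs from the paper's route and, more importantly, the step you label ``passage from scalar to the convex-body formulation'' has a genuine gap. Your stopping conditions are phrased in terms of the scalar quantities $\langle|\vec f|\rangle_{1,3Q}$, $\langle|\vec g|\rangle_{r,Q}$ and a maximal truncation of $T_\Omega(\vec f\chi_{3Q})$, so the bilinear estimate on the good set can at best produce $C\,r'\,\|\Omega\|_{\infty}\,\langle|\vec f|\rangle_{1,3Q}\langle|\vec g|\rangle_{r,Q}|Q|$. This is \emph{not} the convex-body product $\langle\langle\vec f\rangle\rangle_{1,3Q}\langle\langle\vec g\rangle\rangle_{r,Q}$, which can be arbitrarily smaller; that gap is exactly what makes convex-body domination useful for matrix weights. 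Your proposed remedy, testing against $\varphi\in B_{L^\infty}$ and $\psi\in B_{L^{r'}}$, does not close it: the pairing $\int_Q\langle T_\Omega(\vec f\varphi),\vec g\psi\rangle$ still involves $T_\Omega$ and is in no sense equal to $\langle\tfrac{1}{|3Q|}\int\vec f\varphi,\tfrac{1}{|Q|}\int\vec g\psi\rangle$, so the assertion that ``the scalar pairing \ldots\ is the Euclidean inner product of the selected representatives'' is false as written. Nor is this the strategy of \cite{DiPHL}. What \cite{NPTV,DiPHL} and the paper actually do is a John-ellipsoid normalisation: pick $M_1,M_2\in GL_n(\mathbb R)$ with $\vec f=M_1\tilde f$, $\vec g=M_2\tilde g$ and $\mathcal E_{\langle\langle\tilde f\rangle\rangle_{1,3Q_0}}=\mathcal E_{\langle\langle\tilde g\rangle\rangle_{r,3Q_0}}=B$; then each component $\tilde f_i,\tilde g_i$ has average $\lesssim\sqrt n$ (Lemma~\ref{lem:DiPHL}), one runs the scalar stopping argument \emph{componentwise} and takes the union of the $n$ exceptional sets as the common $\Omega$, and the convex-body product is recovered at the end from $|\langle M_1e_{k_1},M_2e_{k_2}\rangle|\le\langle\langle\vec f\rangle\rangle_{1,3Q_0}\langle\langle\vec g\rangle\rangle_{r,3Q_0}$. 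Without this device the stopping family depends on the direction and a single sparse collection cannot be produced.

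Independently of the above, the paper does not decompose $T_\Omega=\sum_kT_\Omega^{(k)}$ or invoke any single-scale $L^1\times L^r$ improving estimate. It first establishes an abstract convex-body sparse principle (Theorem~\ref{Lem:Technical1}) whose only inputs are a weak-type $(q,q)$ bound for $T$ and an $L^r\times L^s\to L^{\nu,\infty}$ bound for the Lerner bi-sublinear maximal operator $\mathcal M_T(f,g)(x)=\sup_{Q\ni x}\tfrac{1}{|Q|}\int_Q|T(f\chi_{\mathbb R^d\setminus 3Q})||g|$. For $T_\Omega$ the first input is the classical weak $(1,1)$ bound; the second follows from $\mathcal M_{T_\Omega}(f,g)\le\mathcal M_{r',T_\Omega}(f)M_r(g)$, H\"older for weak spaces, and Lerner's estimate $\|\mathcal M_{r',T_\Omega}\|_{L^1\to L^{1,\infty}}\lesssim r'\|\Omega\|_{L^\infty}$ from \cite{Le}. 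The factor $r'$ thus comes directly from this endpoint bound, not from interpolation along a dyadic kernel decomposition, and the ``main obstacle'' you identify is bypassed altogether.
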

 
Note that this convex body domination result was sketched in \cite[Remark 6.6]{DiPHL}. Here we provide a full proof of this result that has interest on its own.

\begin{thm}
\label{Thm:Horm}Let $r>1$ and let $T$ be a $L^{r'}$-H\"ormander operator.  Then we have that for each $|\vec{f}|\in L^r(\mathbb{R}^d)$ with compact support and each $|\vec{g}|\in L^1(\mathbb{R}^d)$ there exists a sparse family $\mathcal{S}$ such that 
\begin{equation}\label{eq:Horm}
 \int_{\mathbb{R}^d}\left|\langle T\vec{f},\vec{g}\rangle\right|\leq c_{n,d,T} \underset{Q\in \mathcal{S}}{\sum} \langle\langle\vec{f}\rangle\rangle_{r,Q}\langle\langle\vec{g}\rangle\rangle_{1,Q}|Q|
\end{equation}
\end{thm}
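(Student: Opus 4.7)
The plan is to adapt the convex body domination framework of \cite{NPTV}, refined in \cite{DiPHL} for rough operators, using as a template the scalar bilinear sparse domination for $L^{r'}$-H\"ormander operators due to Lerner \cite{Le} and Li \cite{Li}. Note that the roles of $\vec f$ and $\vec g$ are exchanged with respect to Theorem~\ref{Thm:Rough}: the $L^{r'}$-integrability of the kernel forces, by duality, an $L^r$-average of $\vec f$ and an $L^1$-average of $\vec g$. After a standard localization to a dyadic cube $Q_0$ containing a sufficiently large dyadic neighborhood of $\supp\vec f$ (the tail $\int_{\mathbb R^d\setminus Q_0}|\langle T\vec f,\vec g\rangle|$ being absorbed into a single sparse term attached to $Q_0$ via $H_{r',2}$), the problem reduces to producing a $\tfrac{1}{2}$-sparse family inside $Q_0$.

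The recursive core is a stopping lemma: I claim one can find pairwise disjoint subcubes $\{Q_j\}\subset Q_0$ with $\sum_j|Q_j|\le\tfrac{1}{2}|Q_0|$ satisfying
\[
\int_{Q_0}|\langle T(\vec f\chi_{Q_0}),\vec g\rangle|\, dx \le c_{n,d,T}\,\langle\langle \vec f\rangle\rangle_{r,Q_0}\langle\langle \vec g\rangle\rangle_{1,Q_0}|Q_0|+\sum_j\int_{Q_j}|\langle T(\vec f\chi_{Q_j}),\vec g\rangle|\, dx,
\]
and iterating yields a $\tfrac{1}{2}$-Carleson, hence (by \cite{LN}) $\tfrac{1}{2}$-sparse, family $\mathcal S$. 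The $Q_j$ are taken as the maximal dyadic subcubes of $Q_0$ on which at least one of three conditions triggers: a stopping threshold for the vector grand maximal truncation
\[
\mathcal M_{T,Q_0}^{\#}\vec f(x)=\sup_{x\in Q\subseteq Q_0}\esssup_{y\in Q}|T(\vec f\chi_{Q_0\setminus 3Q})(y)|,
\]
or for the local $L^r$-average of $|\vec f|$, or for the local $L^1$-average of $|\vec g|$, each compared against the corresponding quantity on $Q_0$. Fixing the thresholds so that each condition contributes at most $\tfrac{1}{6}|Q_0|$ of stopping mass uses weak $L^{r,\infty}$ for $\mathcal M_{T,Q_0}^{\#}$ together with weak $L^r$ and $L^1$ bounds for the scalar Hardy--Littlewood maximal function applied to $|\vec f|$ and $|\vec g|$.

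On the good set $E_{Q_0}=Q_0\setminus\bigcup_j Q_j$ the stopping ensures that $T(\vec f\chi_{Q_0})(x)$ lies in a $C$-multiple of the convex body $\langle\langle\vec f\rangle\rangle_{r,Q_0}$, so that testing against $\vec g$ and integrating gives, by the very definition \eqref{eq:2} of $\langle\langle \vec g\rangle\rangle_{1,Q_0}$,
\[
\int_{E_{Q_0}}|\langle T(\vec f\chi_{Q_0}),\vec g\rangle|\, dx\le c\,\langle\langle\vec f\rangle\rangle_{r,Q_0}\langle\langle\vec g\rangle\rangle_{1,Q_0}|Q_0|.
\]
On each $Q_j$ one splits $T(\vec f\chi_{Q_0})=T(\vec f\chi_{Q_j})+T(\vec f\chi_{Q_0\setminus Q_j})$; the first summand is the recursed term, and the second is again controlled pointwise by $\mathcal M_{T,Q_0}^{\#}\vec f$ evaluated at a point of the parent of $Q_j$ where stopping has not yet triggered, producing another contribution of the required size. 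The principal obstacle is the weak-type $L^{r,\infty}$ estimate for $\mathcal M_{T,Q_0}^{\#}$ on vector fields, which follows, as in \cite{Le,Li}, from the pointwise comparison $|\mathcal M_{T,Q_0}^{\#}\vec f(x)-T(\vec f\chi_{Q_0})(x)|\lesssim H_{r',2}\,M_r|\vec f|(x)$, obtained by decomposing $K(y,\cdot)-K(z,\cdot)$ on dyadic annuli $2^kQ\setminus 2^{k-1}Q$ and applying $L^r/L^{r'}$-H\"older; summability is provided by the $L^{r'}$-H\"ormander condition, and the passage from scalar to vector requires only Minkowski's inequality, at the cost of dimensional constants that are absorbed into $c_{n,d,T}$.
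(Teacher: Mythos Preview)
Your outline runs on the same engine as the paper's proof—Li's weak $(r,r)$ bound for the grand maximal truncation $\mathcal M_{T,\infty}$—and the recursive scheme you describe is essentially the scalar one from \cite{Le,Li}. The paper packages this into a general principle (Theorem~\ref{Lem:Technical1}) applied with $q=1$, $s=1$, and the same $\mathcal M_{T,\infty}$ input. The crucial difference is that the paper's principle is built around the John ellipsoid linearization, and this is precisely the step your sketch is missing.

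Your stopping is entirely scalar: you stop on $\mathcal M_{T,Q_0}^{\#}\vec f$, on $M_r|\vec f|$, and on $M|\vec g|$. On the good set this gives only a bound on the \emph{norm}, $|T(\vec f\chi_{Q_0})(x)|\lesssim\langle|\vec f|\rangle_{r,Q_0}$; it does \emph{not} give that $T(\vec f\chi_{Q_0})(x)$ lies in a multiple of the convex body $\langle\langle\vec f\rangle\rangle_{r,Q_0}$, which can be arbitrarily eccentric (or degenerate when the components of $\vec f$ are linearly dependent). Norm control says nothing about direction, so your claimed inclusion and the subsequent ``test against $\vec g$ and integrate'' step are not justified as written. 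What you would recover from your stopping is only the scalar sparse form $\sum_Q\langle|\vec f|\rangle_{r,Q}\langle|\vec g|\rangle_{1,Q}|Q|$, not \eqref{eq:Horm}.

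The fix is exactly what the paper does in the proof of Theorem~\ref{Lem:Technical1}: at each scale pick $M_1,M_2\in GL_n(\mathbb R)$ so that $\langle\langle M_1^{-1}\vec f\rangle\rangle_{r,3Q_0}$ and $\langle\langle M_2^{-1}\vec g\rangle\rangle_{1,3Q_0}$ have John ellipsoid equal to the unit ball; then run the scalar stopping (your three conditions) on \emph{each} of the $n$ components of $\tilde{\vec f}=M_1^{-1}\vec f$ and $\tilde{\vec g}=M_2^{-1}\vec g$ separately, take the union of the $2n$ bad sets, and use Lemma~\ref{lem:DiPHL} to bound the componentwise averages. The recombination uses $|\langle M_1 e_{k_1},M_2 e_{k_2}\rangle|\le\langle\langle\vec f\rangle\rangle_{r,3Q_0}\langle\langle\vec g\rangle\rangle_{1,3Q_0}$, which is how the convex body product actually enters. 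With that modification your argument goes through and is essentially the paper's.
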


Given two convex, compact, symmetric sets $A,B$, the product $AB=\lbrace \langle a,b\rangle :\; a\in A, \; b\in B\rbrace$ is a closed bounded interval. We shall interpret $AB$ as it right endpoint. That will be the case for the products in \eqref{eq:Rough} and \eqref{eq:Horm}.

\begin{rem}
With the available techniques it would be possible to improve \eqref{eq:Horm} to a ``pointwise'' domination result in the spirit of \cite{NPTV}. However since it is not clear that the dependences in our applications derived from the bilinear result can be substantially improved having that result at our disposal we decided to provide just the bilinear domination result for the sake of brevity. 
\end{rem}

\subsection{Proofs of the sparse domination results}\label{sec:MR}
\subsubsection{A convex body domination principle}
This section is devoted to settle the convex body domination principle that we will rely upon in order to settle the results in the preceding section. We shall borrow some ideas and notation from \cite{Le}. Given a sublinear operator $T$ we define the bi-sublinear operator $\mathcal{M}_{T}$ as
$$\mathcal{M}_{T}(f,g)(x)=\underset{Q\ni x}{\sup}\frac{1}{|Q|}\int_{Q}\left|T(f\chi_{\mathbb{R}^{n}\setminus3Q})\right|\left|g\right|.$$
We would also like to recall the John ellipsoid property. If $K\subset \mathbb{R}^{n}$ is a symmetric, closed, convex set, then there exists an ellipsoid $\mathcal{E}_{K}$, such that 
$$\mathcal{E}_{K}\subset K\subset \sqrt{n}\mathcal{E}_{K}$$
where $cA=\lbrace ca\;\;: a\in A\rbrace$.

Before presenting and settling our sparse domination principle we need to borrow a Lemma from \cite[Lemma 6.2]{DiPHL}.
\begin{lem}\label{lem:DiPHL}
Let $f=(f_1,\dots,f_n)\in L^p_{loc}$ suppose that $\mathcal{E}_{\langle\langle f\rangle\rangle_{p,Q}}=B$ where $B$ stands for the unit ball $B=\left\{x\in\mathbb{R}^n : |x|\leq1\right\}$. Then
\[\sup_{j=1,\dots,N}\left(\frac{1}{|Q|}\int_Q|f_j|^p\right)^{\frac{1}{p}}\leq\sqrt{n}\]
\end{lem}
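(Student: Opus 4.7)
The plan is to combine the standard duality characterization of the $L^p(Q)$ norm with the John ellipsoid inclusion, applied one coordinate at a time.

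First I would note that, with the averaged normalization built into the definition of $B_{L^{p'}(Q)}$, the usual $L^{p}$--$L^{p'}$ duality reads
\[
\left(\frac{1}{|Q|}\int_{Q}|f_j|^{p}\right)^{\frac{1}{p}}
=\sup_{\varphi\in B_{L^{p'}(Q)}}\frac{1}{|Q|}\int_{Q}f_j\,\varphi\,dx,
\]
and this supremum is realized (up to normalization) by the test function $\varphi_j=|f_j|^{p-1}\operatorname{sgn}(f_j)/\bigl(\frac{1}{|Q|}\int_{Q}|f_j|^{p}\bigr)^{(p-1)/p}$, whenever the denominator is nonzero (otherwise there is nothing to prove).

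Next, for each coordinate index $j\in\{1,\dots,N\}$, I would plug this specific $\varphi_j$ into the vector average appearing in the definition of $\langle\langle \vec{f}\rangle\rangle_{p,Q}$. This produces a vector
\[
v_j \;=\; \frac{1}{|Q|}\int_{Q}\vec{f}\,\varphi_j\,dx \;\in\;\langle\langle \vec{f}\rangle\rangle_{p,Q},
\]
whose $j$-th coordinate equals $\bigl(\frac{1}{|Q|}\int_{Q}|f_j|^{p}\bigr)^{1/p}$ by the choice of $\varphi_j$. In particular, since $|v_j|\geq |\langle v_j,e_j\rangle|$, it suffices to bound the Euclidean norm of $v_j$.

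Finally, I would invoke the hypothesis together with the John ellipsoid inclusion. Since $\mathcal{E}_{\langle\langle \vec{f}\rangle\rangle_{p,Q}}=B$ and the John ellipsoid property gives $K\subset \sqrt{n}\,\mathcal{E}_{K}$ for every symmetric convex body $K$, we conclude that
\[
\langle\langle \vec{f}\rangle\rangle_{p,Q} \;\subset\; \sqrt{n}\,B.
\]
Combining this with $v_j\in\langle\langle \vec{f}\rangle\rangle_{p,Q}$ yields $|v_j|\le \sqrt{n}$, hence
\[
\left(\frac{1}{|Q|}\int_{Q}|f_j|^{p}\right)^{\frac{1}{p}} \;=\; \langle v_j,e_j\rangle \;\le\; |v_j| \;\le\; \sqrt{n},
\]
and taking the supremum over $j$ finishes the proof. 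There is no real obstacle here — the content is entirely in correctly identifying the supremum of $\frac{1}{|Q|}\int_{Q} f_j\varphi$ over $\varphi\in B_{L^{p'}(Q)}$ with the averaged $L^{p}$ norm, and then translating the John ellipsoid containment into a Euclidean bound for the selected vectors $v_j$.
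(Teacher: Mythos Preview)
Your argument is correct. The paper does not actually prove this lemma; it is quoted directly from \cite[Lemma 6.2]{DiPHL} without proof, so there is no in-paper argument to compare against. Your route---realizing the averaged $L^p$ norm of $f_j$ via the dual extremizer $\varphi_j$, observing that the resulting vector $v_j=\frac{1}{|Q|}\int_Q\vec{f}\,\varphi_j$ lies in $\langle\langle\vec{f}\rangle\rangle_{p,Q}\subset\sqrt{n}\,B$, and reading off the $j$-th coordinate---is exactly the natural proof and matches the argument given in the cited reference.
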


We are now in the position to state and prove our sparse domination principle.
\begin{thm}
\label{Lem:Technical1} Let $1\leq q \leq r$ and $ s\geq 1$. Assume that $T$ is a linear operator of weak type $(q,q)$ and that $\mathcal{M}_T$ maps $L^r\times L^s$ into $L^{\nu,\infty}$ where $\frac{1}{\nu}=\frac{1}{s}+\frac{1}{r}$. Then, for each $\vec{f}$ with compact support such that $|\vec{f}|\in L^{r}(\mathbb{R}^{d})$ and for each $|\vec{g}|\in L^{s}_{Loc}(\mathbb{R}^{d})$, there exists a sparse family $\mathcal{S}$ such that
\[
 \int_{\mathbb{R}^d}\left|\langle T\vec{f},\vec{g}\rangle\right|\leq c_{n,d}\left(\|\mathcal{M}_T\|_{L^r\times L^s\rightarrow L^{\nu,\infty}} +\|T\|_{L^q\rightarrow L^{q,\infty}}\right) \underset{Q\in S}{\sum} \langle\langle\vec{f}\rangle\rangle_{r,Q}\langle\langle\vec{g}\rangle\rangle_{s,Q}|Q| 
\]
\end{thm}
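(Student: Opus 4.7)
The plan is to adapt Lerner's iterative sparse domination scheme (\cite{Le}) to the convex body setting, using the weak-$(q,q)$ bound of $T$ for the ``local'' contribution and the bilinear maximal bound for $\mathcal{M}_T$ for the ``far-field'' contribution. Via a standard partition argument, it suffices to show that for every cube $Q_0\supset 3\operatorname{supp}\vec f$ there exist pairwise disjoint subcubes $\{P_j\}\subset Q_0$ with $\sum_j|P_j|\le \tfrac12|Q_0|$ and
\begin{equation*}
\int_{Q_0}|\langle T(\vec f\chi_{3Q_0}),\vec g\rangle|\,dx \le C\,\langle\langle\vec f\rangle\rangle_{r,3Q_0}\langle\langle\vec g\rangle\rangle_{s,3Q_0}|Q_0| + \sum_j\int_{P_j}|\langle T(\vec f\chi_{3P_j}),\vec g\rangle|\,dx,
\end{equation*}
with $C=c_{n,d}(\|\mathcal M_T\|_{L^r\times L^s\to L^{\nu,\infty}}+\|T\|_{L^q\to L^{q,\infty}})$. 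Iterating this recursion and using the $\tfrac12$-Carleson property of the resulting cubes produces the sparse family.

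To obtain the stopping cubes, I form the exceptional set $E=E_1\cup E_2$, where
\[
E_1=\{x\in Q_0:\ \mathcal M_T(\vec f\chi_{3Q_0},\vec g\chi_{Q_0})(x)>A\|\mathcal M_T\|\langle|\vec f|\rangle_{r,3Q_0}\langle|\vec g|\rangle_{s,3Q_0}\},
\]
\[
E_2=\{x\in Q_0:\ |T(\vec f\chi_{3Q_0})(x)|>A\|T\|_{L^q\to L^{q,\infty}}\langle|\vec f|\rangle_{q,3Q_0}\}.
\]
The weak-type bounds yield $|E|\le|Q_0|/2^{d+2}$ for $A$ large (depending only on $d$); note that $1/\nu=1/r+1/s$ is precisely the condition under which the $E_1$ estimate is scale-invariant in $Q_0$. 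A Calderón--Zygmund stopping of $\chi_E$ inside $Q_0$ at height $2^{-d-1}$ produces cubes $\{P_j\}$ with $E\subset\bigcup_j P_j$ a.e., $\sum_j|P_j|\le\tfrac12|Q_0|$, and a point $\xi_j\in P_j\setminus E$ for each $j$. Splitting $\int_{Q_0}=\int_{Q_0\setminus\bigcup_j P_j}+\sum_j\int_{P_j}$: on $Q_0\setminus\bigcup_j P_j\subset Q_0\setminus E_2$ one combines the pointwise control of $|T(\vec f\chi_{3Q_0})|$ (from $E_2^c$) with Hölder's inequality on $\int_{Q_0}|\vec g|$ using $s\ge 1$, together with $q\le r$, to obtain the first summand in scalar form; on each $P_j$, decomposing $\vec f\chi_{3Q_0}=\vec f\chi_{3P_j}+\vec f\chi_{3Q_0\setminus 3P_j}$, the first term is the recursive tail while the second is bounded by testing $\mathcal M_T$ at $P_j$ and invoking $\xi_j\notin E_1$, which gives $\int_{P_j}|T(\vec f\chi_{3Q_0\setminus 3P_j})||\vec g|\le|P_j|\mathcal M_T(\vec f\chi_{3Q_0},\vec g\chi_{Q_0})(\xi_j)$.

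The principal obstacle is that the above delivers the recursion with the \emph{scalar} product $\langle|\vec f|\rangle_{r,3Q_0}\langle|\vec g|\rangle_{s,3Q_0}$ rather than the convex body product $\langle\langle\vec f\rangle\rangle_{r,3Q_0}\langle\langle\vec g\rangle\rangle_{s,3Q_0}$, and the former can be strictly larger than the latter when the two convex bodies are misaligned. Following \cite{NPTV,DiPHL}, I close this gap by a John ellipsoid normalisation: letting $\mathcal E_f,\mathcal E_g$ be the positive self-adjoint John ellipsoid matrices of the two convex bodies at $3Q_0$, apply the linear changes $\vec f\mapsto\mathcal E_f^{-1}\vec f$, $\vec g\mapsto\mathcal E_g^{-1}\vec g$. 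By Lemma~\ref{lem:DiPHL} the coordinate $L^r,L^s$ averages of the normalised vectors are bounded by $\sqrt n$, so the preceding scalar recursion runs with absolute constants on the normalised pair, and undoing the normalisation contributes a factor $|\mathcal E_g\mathcal E_f|_{op}$. By the John inclusions $\mathcal E\subset K\subset\sqrt n\,\mathcal E$ this factor is comparable, up to an $n$-dependent constant, to $\langle\langle\vec f\rangle\rangle_{r,3Q_0}\langle\langle\vec g\rangle\rangle_{s,3Q_0}$, yielding the stated recursion and hence the theorem.
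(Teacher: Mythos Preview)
Your approach is correct and essentially coincides with the paper's: both normalise $\vec f,\vec g$ at the top cube via the John ellipsoid, build exceptional sets from the weak-$(q,q)$ bound and the $\mathcal M_T$ bound, perform a local Calder\'on--Zygmund stopping, split into the three terms $I+II+III$, and iterate. The only noticeable difference is that the paper defines the exceptional sets \emph{componentwise} in the normalised coordinates (i.e.\ one set $E_1^i$ per component $\tilde f_i$ and one set $E_2^i$ per diagonal pair $(\tilde f_i,\tilde g_i)$, then takes the union over $i$), whereas you work with the full vector norms $|T\tilde f|$ and with a vector-valued $\mathcal M_T$; this forces you to implicitly use that the scalar weak-type and $\mathcal M_T$ hypotheses transfer to the $\mathbb R^n$-valued extension with an $n$-dependent loss, which is harmless since the final constant is $c_{n,d}$. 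Either bookkeeping yields the same recursion, and your observation that $|\mathcal E_g\mathcal E_f|_{op}\le \langle\langle\vec f\rangle\rangle_{r,3Q_0}\langle\langle\vec g\rangle\rangle_{s,3Q_0}$ is exactly the paper's claim \eqref{eq:M1M2}.
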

Our argument will rely upon a combination of ideas in \cite{DiPHL, CDiPOu, Le}.
\subsubsection*{Proof of Theorem \ref{Lem:Technical1}}
Fix a cube $Q_0$. We claim that there exists a family of pairwise disjoint cubes $\{P_j\}$ contained in $Q_0$ with $\sum_j|P_j|\leq\frac{1}{2}|Q_0|$
such that
\begin{equation}
\begin{split}
\int_{Q_{0}} \left|\left<T(\vec{f}\chi_{3Q_{0}}),\vec{g}\right>\right|
&\leq c_{n,d}(A_{1}+A_{2}) \langle \langle\vec{f}\rangle\rangle_{r,3Q_0}\langle\langle \vec{g}\rangle\rangle_{s,3Q_0}|3Q_0|\\
&+{\underset{j}{\sum}\int_{P_{j}} \left|\left<T(\vec{f}\chi_{3P_{j}}),\vec{g}\right>\right|}
\end{split}
\label{claim} \end{equation}

We begin observing that for $\vec{f}$ and $\vec{g}$, there exist matrices $M_{1}$, $M_{2} \in GL_{n}(\mathbb{R})$ such that $M_{1}\overset{\vec{\sim}}{f}=\vec{f}$ and $M_{2}\overset{\vec{\sim}}{g}=\vec{g}$ and the John ellipsoid of  $\langle\langle\overset{\vec{\sim}}{f}\rangle\rangle_{r,3Q_{0}}$  and  $\langle\langle\overset{\vec{\sim}}{g}\rangle\rangle_{s,3Q_{0}}$  is the closed unit ball $B$ (see \cite{CDiPOu}). For  $Q_{0}$ let us call
 $$\mathcal{M}_{T,Q_{0}}\left(\overset{\sim}{f_{i}},\overset{\sim}{g_{i}}\right)(x)=\underset{Q\ni x, Q\subset Q_{0}}{\sup}\frac{1}{|Q|}\int_{Q}|T(\overset{\sim}{f_{i}}\chi_{3Q_{0}\setminus3Q})||\overset{\sim}{g_{i}}|dy$$
 Consider the sets
 $$E_{1}^{i}=\lbrace x\in Q_{0} : |T(\overset{\sim}{f_{i}}\chi_{3Q_{0}})(x)|>A_1\langle\overset{\sim}{f_{i}}\rangle_{q,3Q_{0}}\rbrace$$ and 
 $$E_{2}^{i}=\lbrace x\in Q_{0} : |\mathcal{M}_{T,Q_{0}}(\overset{\sim}{f_{i}},\overset{\sim}{g_{i}})(x)|>A_2\langle\overset{\sim}{f_{i}}\rangle_{r,3Q_{0}}\langle\overset{\sim}{g_{i}}\rangle_{s,3Q_{0}}\rbrace$$
 We begin observing that that we can choose $A_1,A_2>0$ such that
 \begin{equation}\label{eq:Omega}|\Omega|\leq \dfrac{1}{2^{d+2}}|Q_{0}|
 \end{equation}
 where $\Omega= E_{1}\cup E_{2}$, $E_{1}=\underset{i=1}{\overset{n}{\bigcup}}E_{1}^{i}$ and $E_{2}=\underset{i=1}{\overset{n}{\bigcup}}E_{2}^{i}$.
 
 First we note that
 \begin{align*}
 |E^{i}_{1}|&=|x\in Q_{0} :|T(\overset{\sim}{f_{i}}\chi_{3Q_{0}})(x)|>A_1\langle\overset{\sim}{f_{i}}\rangle_{q,3Q_{0}}\rbrace|\leq \frac{1}{(A_1\langle\overset{\sim}{f_{i}}\rangle_{q,3Q_{0}})^{q}}\|T\|^{q}_{L^{{q}}\rightarrow L^{{q, \infty}}}\|\overset{\sim}{f_{i}}\|^{q}_{L^{q}(3Q_{0})}\\
 &\leq \frac{1}{A_1^{q}\frac{1}{|3Q_{0}|}\int_{3Q_{0}} |\overset{\sim}{f_{i}}|^{q}dx}\|T\|^{q}_{L^{{q}}\rightarrow L^{{q, \infty}}}|3Q_{0}|\dfrac{1}{|3Q_{0}|}\int_{3Q_{0}} |\overset{\sim}{f_{i}}|^{q}dx\\
 &=\dfrac{1}{A_1^{q}}\|T\|^{q}_{L^{{q}}\rightarrow L^{{q, \infty}}}3^{d}|Q_{0}|    
 \end{align*}
Hence, choosing $A_1=\|T\|_{L^{{q}}\rightarrow L^{{q, \infty}}}3^{\frac{d}{q}}2^{\frac{d+3}{q}}n^{\frac{1}{q}}$ we have that $|E_1|\leq\frac{1}{2^{d+3}}|Q_0|$.

Next, we observe that 
 \begin{align*}|E_{2}^{i}|&=|\lbrace x\in Q_{0} : |\mathcal{M}_{T,Q_{0}}(\overset{\sim}{f_{i}},\overset{\sim}{g_{i}})(x)|>A_{2}\langle\overset{\sim}{f_{i}}\rangle_{r,3Q_{0}}\langle\overset{\sim}{g_{i}}\rangle{s,3Q_{0}}\rbrace|\\
 &\leq \dfrac{1}{\left( A_{2}\langle\overset{\sim}{f_{i}}\rangle_{r,3Q_{0}}\langle\overset{\sim}{g_{i}}\rangle_{s,3Q_{0}}\right)^{\nu}}\|\mathcal{M}_{T}\|^{\nu}_{ L^{r}\times L^{s}\rightarrow L^{\nu,\infty}}\|\overset{\sim}{f_{i}}\|_{L^{r}(3Q_{0})}^{\nu}\|\overset{\sim}{g_{i}}\|_{L^{s}(3Q_{0})}^{\nu}\\
 &\leq \dfrac{\|\mathcal{M}_{T,Q_{0}}\|^{\nu}_{ L^{r}\times L^{s}\rightarrow L^{\nu,\infty}}}{ A_{2}^{\nu}}|3Q_0|\leq \dfrac{\|\mathcal{M}_{T,Q_{0}}\|^{\nu}_{ L^{r}\times L^{s}\rightarrow L^{\nu,\infty}}}{ A_{2}^{\nu}}3^d|Q_0|
 \end{align*}
and choosing $A_2=\|\mathcal{M}_{T,Q_{0}}\|_{ L^{r}\times L^{s}\rightarrow L^{\nu,\infty}}3^{\frac{d}{\nu}}2^{\frac{d+3}{\nu}}n^{\frac{1}{\nu}}$ we have that $|E_2|\leq\frac{1}{2^{d+3}}|Q_0|$.
Combining the estimates above \eqref{eq:Omega} readily follows.

Now we form the Calder\'on-Zygmund decomposition with respect to $Q_0$ of $\chi_\Omega$ at height $\frac{1}{2^{d+1}}$. We obtain a family of pairwise disjoint cubes $P_{j}\in\mathcal{D}(Q_{0})$, such that
\begin{align}
\dfrac{1}{2^{d+1}}|P_{j}|&\leq |P_{j}\cap \Omega|\leq \dfrac{1}{2}|P_{j}|\\
|\Omega\backslash \cup_{j}P_{j}|&=0 \\
\sum_{j}|P_{j}|&\leq \frac{1}{2}|Q_{0}|\label{eq:sparse}\\
P_{j}\cap \Omega^{c}&\neq \emptyset
\end{align} 

Having that family of cubes at our disposal we continue our argument as follows. 
\begin{align*}
\int_{Q_{0}} \left|\left<T(\vec{f}\chi_{3Q_{0}}),\vec{g}\right>\right|&=\int_{Q_{0}\setminus\cup P_{j}} \left|\left<T(\vec{f}\chi_{3Q_{0}}),\vec{g}\right>\right|+ \underset{j}{\sum}\int_{P_{j}} \left|\left<T(\vec{f}\chi_{3Q_{0}}),\vec{g}\right>\right|\\
&\leq {\int_{Q_{0}\setminus\cup P_{j}} \left|\left<T(\vec{f}\chi_{3Q_{0}}),\vec{g}\right>\right|}+ {\underset{j}{\sum}\int_{P_{j}} \left|\left<T(\vec{f}\chi_{3Q_{0}\backslash 3P_{j}}),\vec{g}\right>\right|}+{\underset{j}{\sum}\int_{P_{j}} \left|\left<T(\vec{f}\chi_{3P_{j}}),\vec{g}\right>\right|}\\
&=I+II+III
\end{align*}

First we deal with $I$.

\begin{align*}
\int_{Q_{0}\setminus\cup P_{j}} \left|\left<T(\vec{f}\chi_{3Q_{0}}),\vec{g}\right>\right|&=\int_{Q_{0}\setminus\cup P_{j}} \left|\left<T(M_{1}\overset{\vec{\sim}}{f}\chi_{3Q_{0}}),M_{2}\overset{\vec{\sim}}{g}\right>\right| \leq \int_{Q_{0}\setminus \cup P_{j}} \left|\left< M_{1}T(\overset{\vec{\sim}}{f}\chi_{3Q_{0}}),M_{2}\overset{\vec{\sim}}{g}\right>\right|\\
&=\int_{Q_{0}\setminus\cup P_{j}} \left|\underset{i,k_{1},k_{2}=1}{\overset{n}{\sum}}M^{ik_{1}}_{1}M^{ik_{2}}_{2}T(\overset{\sim}{f_{i}}\chi_{3Q_{0}})\overset{\sim}{g_{i}}\right|\\
&\leq \underset{1\leq k_{1},k_{2}\leq n}{\sup}\left|\underset{i=1}{\overset{n}{\sum}} M^{ik_{1}}_{1}M^{ik_{2}}_2\right|\underset{i=1}{\overset{n}{\sum}}\int_{Q_{0}\setminus\cup P_{j}}\left|T(\overset{\sim}{f_{i}}\chi_{3Q_{0}})\overset{\sim}{g_{i}}\right|
\end{align*}

Since $|\Omega \backslash \cup_{j}P_{j}|=0$, we can continue as follows
\begin{align*}
&\underset{1\leq k_{1},k_{2}\leq n}{\sup}\left|\underset{i=1}{\overset{n}{\sum}} M^{ik_{1}}_{1}M^{ik_{2}}_2\right|\underset{i=1}{\overset{n}{\sum}}\int_{Q_{0}\setminus\cup P_{j}}\left|T(\overset{\sim}{f_{i}}\chi_{3Q_{0}})\overset{\sim}{g_{i}}\right|\\
&\leq \underset{1\leq k_{1},k_{2}\leq n}{\sup}\left|\underset{i=1}{\overset{n}{\sum}} M^{ik_{1}}_{1}M^{ik_{2}}_2\right|\underset{i=1}{\overset{n}{\sum}}A_{1}\langle\overset{\sim}{f_{i}}\rangle_{q,3Q_{0}}\int_{Q_{0}}| \overset{\sim}{g_{i}}|\\
&\leq \underset{1\leq k_{1},k_{2}\leq n}{\sup}\left|\underset{i=1}{\overset{n}{\sum}} M^{ik_{1}}_{1}M^{ik_{2}}_2\right|\underset{i=1}{\overset{n}{\sum}}A_{1}\langle\overset{\sim}{f_{i}}\rangle_{q,3Q_{0}}\int_{3Q_{0}}| \overset{\sim}{g_{i}}|\\
&\leq \underset{1\leq k_{1},k_{2}\leq n}{\sup}\left|\underset{i=1}{\overset{n}{\sum}} M^{ik_{1}}_{1}M^{ik_{2}}_2\right|\underset{i=1}{\overset{n}{\sum}}A_{1}\langle\overset{\sim}{f_{i}}\rangle_{q,3Q_{0}}\langle\overset{\sim}{g_{i}}\rangle_{3Q_{0}}3^{d}|Q_{0}|.\end{align*}
By Lemma \ref{lem:DiPHL} we have that $\underset{i=1,\dots,N}{\sup}\langle\overset{\sim}{f_{i}}\rangle_{q,3Q_{0}}\leq \sqrt{n}$ and also that  $\underset{i=1,\dots,N}{\sup}\langle\overset{\sim}{g_{i}}\rangle_{q,3Q_{0}}\leq \sqrt{n}$. Therefore, the last part of the right term of the inequality is bounded by a dimensional constant, namely,
$$\underset{1\leq k_{1},k_{2}\leq n}{\sup}\left|\underset{i=1}{\overset{n}{\sum}} M^{ik_{1}}_{1}M_2^{ik_{2}}\right|\underset{i=1}{\overset{n}{\sum}}A_{1}\langle\overset{\sim}{f_{i}}\rangle_{q,3Q_{0}}\langle\overset{\sim}{g_{i}}\rangle_{3Q_{0}}3^{d}|Q_{0}|\leq\underset{1\leq k_{1},k_{2}\leq n}{\sup}\left|\underset{i=1}{\overset{n}{\sum}} M^{ik_{1}}_{1}M_2^{ik_{2}}\right|A_{1}C_{n,d}|Q_{0}|.$$
It remains to provide an estimate for 
 $\underset{1\leq k_{1},k_{2}\leq n}{\sup}\left|\underset{i=1}{\overset{n}{\sum}} M^{i,k_{1}}_{1}M^{i,k_{2}}\right|$.
 
We claim that
\begin{equation}\label{eq:M1M2}
\left|\underset{i=1}{\overset{n}{\sum}} M^{i,k_{1}}_{1}M_2^{i,k_{2}}\right|=|\langle M_{1}{e}_{k_{1}},M_{2}{e}_{k_{2}}\rangle|\leq \langle\langle \vec{f}\rangle\rangle_{r,3Q_{0}}\langle \langle\vec{g}\rangle\rangle_{s,3Q_{0}}\qquad 1\leq k_{1},k_{2}\leq n
\end{equation}
where ${e}_{k}$ is the $k$-th coordinate  vector. 
Indeed, fix  $ 1\leq k_{1},k_{2}\leq n$.
Since ${e}_{k_{1}}$ belongs to the unit ball $ B=\langle\langle\overset{\vec{\sim}}{f}\rangle\rangle_{r,3Q_{0}}$ there exists $\varphi_{1}\in B_{L^{r'}(3Q_0)}$ such that
$$e_{k_{1}}=\dfrac{1}{|3Q_{0}|}\int_{3Q_{0}} \overset{\vec{\sim}}{f}\varphi_{1}$$
Therefore,
$$M_{1}e_{k_{1}}=\dfrac{1}{|3Q_{0}|}\int_{3Q_{0}} M_{1} \overset{\vec{\sim}}{f_{i}}\varphi_{1}=\dfrac{1}{|3Q_{0}|}\int_{3Q_{0}} \vec{f}\varphi_{1}\in  \langle\langle \vec{f}\rangle\rangle_{r,3Q_{0}}.$$
Analogously for $e_{k_{2}}$, we have that 
$M_{2}e_{k_{2}}
\in \langle\langle \vec{g}\rangle\rangle_{s,3Q_{0}}$
and hence \eqref{eq:M1M2} holds.

Combining the estimates above we have that
$$ I=\int_{Q_{0}\setminus\cup P_{j}} \left|\left<T(M_{1}\overset{\vec{\sim}}{f}\chi_{3Q_{0}}),M_{2}\overset{\vec{\sim}}{g}\right>\right|\leq A_{1}C_{n,d}|Q_{0}|\langle \langle\vec{f}\rangle\rangle_{r,3Q_{0}}\langle\langle \vec{g}\rangle\rangle_{s,3Q_{0}}$$

For $II$ we begin arguing as we did for $I$. Since  $\vec{f}=M_{1}\overset{\vec{\sim}}{f}$ and $\vec{g}=M_{2}\overset{\vec{\sim}}{g}$ we have that
\begin{align*}
&\underset{j}{\sum}\int_{P_{j}} \left|\left<T(\vec{f}\chi_{3Q_{0}\backslash 3P_{j}}),\vec{g}\right>\right|=\underset{j}{\sum}\int_{P_{j}} \left|\left<T(M_{1}\overset{\vec{\sim}}{f}\chi_{3Q_{0}\backslash 3P_{j}}),M_{2}\overset{\vec{\sim}}{g}\right>\right|\\
&\leq \underset{j}{\sum} \int_{P_{j}}\left|\left< M_{1}T(\overset{\vec{\sim}}{f}\chi_{3Q_{0}\setminus 3P_{j}}),M_{2}\overset{\vec{\sim}}{g}\right>\right|= \underset{j}{\sum}\int_{P_{j}} \left|\underset{i,k_{1},k_{2}=1}{\overset{n}{\sum}}M^{ik_{1}}_{1}M^{ik_{2}}_{2}T(\overset{\sim}{f_{i}}\chi_{3Q_{0}\setminus 3P_{j}})\overset{\sim}{g_{i}}\right|\\
&\leq \underset{1\leq k_{1},k_{2}\leq n}{\sup}\left|\underset{i=1}{\overset{n}{\sum}} M^{ik_{1}}_{1}M^{ik_{2}}_2\right| \underset{j}{\sum}\underset{i=1}{\overset{n}{\sum}}\int_{P_{j}}\left|T(\overset{\sim}{f_{i}}\chi_{3Q_{0}\setminus 3P_{j}})\overset{\sim}{g_{i}}\right|
\end{align*}
At this point since $P_{j}\cap \Omega^{c}\neq \emptyset$ and also $\sum_{j}|P_{j}|\leq \frac{1}{2}|Q_{0}|$ we have that 
$$\underset{j}{\sum} \int_{P_{j}}\left|T(\overset{\sim}{f_{i}}\chi_{3Q_{0}\setminus 3P_{j}})\overset{\sim}{g_{i}}\right|\leq c_d\underset{j}{\sum}A_{2}\langle\overset{\sim}{f_{i}}\rangle_{r,3Q_{0}}\langle\overset{\sim}{g_{i}}\rangle_{s,3Q_{0}}|P_{{j}}|\leq c_d\dfrac{A_2}{2}\langle\overset{\sim}{f_{i}}\rangle_{r,3Q_{0}}\langle\overset{\sim}{g_{i}}\rangle_{s,3Q_{0}}|Q_{0}|.$$
Arguing as above by Lemma \ref{lem:DiPHL} the right term of the inequality above is bounded by a dimensional constant. 
Combining the estimates above
$$\underset{j}{\sum}\int_{P_{j}} \left|\left<T(M_{1}\overset{\vec{\sim}}{f}\chi_{3Q_{0}\backslash 3P_{j}}),M_{2}\overset{\vec{\sim}}{g}\right>\right|\leq \underset{1\leq k_{1},k_{2}\leq n}{\sup}\left|\underset{i=1}{\overset{n}{\sum}} M^{ik_{1}}_{1}M_2^{ik_{2}}\right|A_{2}C'_{n,d}|Q_{0}|$$
which combined with \eqref{eq:M1M2} yields that
$$II\leq A_{2}C'_{n,d}\langle\langle \vec{f}\rangle\rangle_{r,3Q_{0}}\langle\langle \vec{g}\rangle\rangle_{s,3Q_{0}}|Q_{0}|.$$
Taking into account the estimates for $I$, $II$ and the properties of the family $\{P_j\}$ the claim \eqref{claim} at the beginning of the proof is settled.

It is not hard to check that iterating the claim leads to the construction of a family of cubes $\mathcal{F}$ contained in $Q_0$ which is $\frac{1}{2}$-sparse and such that
\begin{equation}\label{eq:local}
\int_{Q_0} \left|\left<T(\vec{f}\chi_{3Q_{0}}),\vec{g}\right>\right|\leq c_{n,d}(A_{1}+A_{2})\underset{Q\subset \mathcal{F}}{\sum}  \langle\langle \vec{f}\rangle\rangle_{r,3Q_0}\langle\langle \vec{g}\rangle\rangle_{s,3Q_0}|3Q_0|
\end{equation}
Relying upon the preceding estimate we show now how to end the proof. Take a partition of  $\mathbb{R}^{n}$ by cubes  $R_{j}$ such that  supp($\vec{f})\subset 3R_{j}$ for each $j$. For example, take a cube $Q_{0}$ such that supp($\vec{f})\subset Q_{0}$ and cover $3Q_{0}\backslash Q_{0}$ by $3^{n}-1$  congruent cubes $R_{j}$. Each of them satisfies $Q_{0}\subset 3R_{j}$ Next, in the same way cover $9Q_{0}\backslash 3Q_{0}$ and so on. The union of resulting cubes, including  $Q_{0}$, will satisfy the desired property. Therefore, applying \eqref{eq:local} to each $R_j$ as follows
 $$\int_{\mathbb{R}^d} \left|\left<T(\vec{f}\chi_{3Q_{0}}),\vec{g}\right>\right|=
 \sum_j\int_{R_j}\left|\left<T(\vec{f}\chi_{3R_j}),\vec{g}\right>\right| \leq c_{n,d}(A_{1}+A_{2}) \underset{Q\in\bigcup_{j} \mathcal{F}_{j}}{\sum}  \langle\langle \vec{f}\rangle\rangle_{r,Q}\langle\langle \vec{g}\rangle\rangle_{s,Q}|Q|.$$
  
Note that the family  $\bigcup_{j} \mathcal{F}_{j}$ is $\frac{1}{2}$-sparse as a disjoint union of $\frac{1}{2}$-sparse families. Hence, setting $\mathcal{S}= \lbrace 3Q: Q \in \cup_{j} \mathcal{F}_{j}\rbrace$, we obtain that $\mathcal{S} $ is $\frac{1}{2\cdot3^{n}}$-sparse. This ends the proof of the Theorem.
\subsubsection{Proof of Theorem \ref{Thm:Rough}}
Given $1\le p\le \infty$, we define the maximal operator ${\mathcal M}_{p,T}$ by
$${\mathcal M}_{p,T}f(x)=\sup_{Q\ni x}\left(\frac{1}{|Q|}\int_Q|T(f\chi_{{\mathbb R}^n\setminus 3Q})|^pdy\right)^{1/p}$$

Note that in \cite{Le} it was shown that for every $p\geq 1$,
\begin{equation}\label{eq:MpTRoughLerner}
\|{\mathcal M}_{p,T_{\Omega}}\|_{L^1\to L^{1,\infty}}\le c\|\Omega\|_{L^\infty(\mathbb{S}^{n-1})}p
\end{equation}
Observe that taking that into account, we have that
\begin{align*}
\mathcal{M}_{T_{\Omega}}(f,g)(x) & =\sup_{Q\ni x}\frac{1}{|Q|}\int_{Q}\left|T_{\Omega}(f\chi_{\mathbb{R}^{n}\setminus3Q})\right|\left|g\right|\\
 &\leq\sup_{Q\ni x}\left(\frac{1}{|Q|}\int_{Q}\left|T_{\Omega}(f\chi_{\mathbb{R}^{n}\setminus3Q})\right|^{r'}\right)^{\frac{1}{r'}}M_{r}(g) =\mathcal{M}_{T,r'}(f)M_{r}(g)
\end{align*}
By Hölder inquality for weak type spaces, combined with \eqref{eq:MpTRoughLerner}
\begin{equation}\label{eq:MTRough}
\|\mathcal{M}_{T_{\Omega}}(f,g)\|_{L^{\frac{r}{r+1},\infty}}\lesssim\|\mathcal{M}_{T_{\Omega},r'}(f)\|_{L^{1,\infty}}\|M_{r}g\|_{L^{r,\infty}}\l\lesssim r' \|\Omega\|_{L^\infty(\mathbb{S}^{d-1})}\|f\|_{L^1}\|g\|_{L^r}
\end{equation}
Taking into account  that
\[\|T_{\Omega}\|_{L^1\rightarrow L^{1,\infty}}\leq c_d\|\Omega\|_{L^\infty(\mathbb{S}^{d-1})}\]
and \label{eq:MTRough}${M}_{T_{\Omega}}$ Theorem \ref{Thm:Rough} readily follows from Theorem \ref{Lem:Technical1}.

\subsubsection{Proof of Theorem \ref{Thm:Horm}}
To settle the theorem it suffices to apply Theorem  \ref{Lem:Technical1} combined with the fact that
\begin{equation}\label{eq:MTHorm}
\|\mathcal{M}_{T}(f,g)\|_{L^{\frac{r}{r+1},\infty}}\leq c_{n,T} \|f\|_{L^r} \|g\|_{L^1}
\end{equation}
and that $T$ is of weak type $(1,1)$ which is well known. Hence it remains to settle the latter. Note that 
$$\mathcal{M}_{T}(f,g)(x)=\sup_{Q\ni x}\frac{1}{|Q|}\int_{Q}\left|T(f\chi_{\mathbb{R}^{n}\setminus3Q})\right|\left|g\right|
\leq (Mg) \sup_{Q\ni x}\|T(f\chi_{\mathbb{R}^{n}\setminus3Q})\|_{L^\infty(Q)}=Mg \mathcal{M}_{T,\infty}(f)
$$
Then we have that by H\"older inequality for weak spaces, $$\|\mathcal{M}_{T}(f,g)\|_{L^{\frac{r}{r+1},\infty}}\leq \|\mathcal{M}_{T,\infty}(f)\|_{L^{r,\infty}}\|Mg\|_{L^{1,\infty}}$$

In \cite{Li}, Li showed that $\|\mathcal{M}_{T,\infty}(f)\|_{L^{r,\infty}}\leq c_{n,T} \|f\|_{L^r}$ . This fact combined with the well-known endpoint estimate for the maximal function, yields \eqref{eq:MTHorm}.

\section{Some further definitions and Lemmata}\label{sec:defsLemmata}
We recall that norms on $\mathbb{R}^n$ can be represented by positive definite self-adjoint matrices, namely, if $\rho:\mathbb{R}^n\rightarrow\mathbb{R}$ is a norm, then there exists  a positive definite self-adjoint matrix $A$ such that 
$|Ae|\simeq\rho(e)$.  We remit the reader to \cite{V} for more details.

This fact is particularly useful when dealing with matrix weights. Given a matrix weight $W$ and $p\geq1$ we will call $\mathcal{W}_{p,Q}$ a matrix such that 
\[|\mathcal{W}_{p,Q}\vec{e}|\simeq\left(\frac{1}{|Q|}\int_Q|W^\frac{1}{p}(x)\vec{e}|^pdx\right)^{\frac{1}{p}}\]
and if $p>1$ we will call $\mathcal{W}_{p',Q}$ a matrix such that 
\[|\mathcal{W}_{p',Q}\vec{e}|\simeq\left(\frac{1}{|Q|}\int_Q|W^{-\frac{1}{p'}}(x)\vec{e}|^{p'}dx\right)^{\frac{1}{p'}}.\]

Relying upon this definition we observe that the $A_p$ condition can be expressed in terms of reducing matrices. This follows from the fact that
\[\frac{1}{|Q|}\int_{Q}\left(\frac{1}{|Q|}\int\left|W^{\frac{1}{p}}(x)W^{-\frac{1}{p}}(y)\right|_{op}^{p'}dy\right)^{\frac{p}{p'}}dx\simeq|\mathcal{W}_{p,Q}\mathcal{W}_{p',Q}|_{op}^p\]
for $p>1$, and
\[\frac{1}{|Q|}\int_Q|W(x)W^{-1}(y)|_{op}dx\simeq|\mathcal{W}_{1,Q}W^{-1}(y)|_{op}\]
for $p=1$.

Another property related to matrix weights that will be fundamental for us is the reverse H\"older property. It was shown in \cite{HPAinfty} (see \cite{HPR} for an alternative proof) that if $w\in A_\infty$ then 
\[
\left(\frac{1}{|Q|}\int_Qw^r(x)\right)^{\frac{1}{r}}\leq 2 \frac{1}{|Q|}\int_Q w(x)
\]
where $r=1+\frac{1}{2^{d+11}[w]_{A_\infty}}$. 

Recall that if $W\in A_p$ then we have that $|W^\frac{1}{p}\vec{e}|^p$ are scalar $A_p$ weights uniformly on $\vec{e}$ and consequently, $|W^\frac{1}{p}\vec{e}|^p$ are scalar $A_\infty$ weights, with scalar $A_\infty$ constants uniformly controlled by $[W]_{A_p}$. This fact allows to make sense of \eqref{eq:AinftyMat}.

A consequence of those definitions is the following Reverse H\"older inequality that we will repeatedly throughout the remainder of the paper. 
\begin{lem}
Let $A$ by a self-adjoint positive definite matrix and let $1\leq p<\infty$. Assume that $W\in A_{\infty,p}^{sc}$. Then, if $r\leq1+\frac{1}{2^{d+11}[W]_{A_{\infty,p}^{sc}}}$ we have that
\[
\left(\frac{1}{|Q|}\int_Q|W^\frac{1}{p}(x)A|_{op}^r\right)^{\frac{1}{r}}\lesssim \frac{1}{|Q|}\int_Q|W^\frac{1}{p}(x)A|_{op}
\]
\end{lem}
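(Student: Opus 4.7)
The plan is to show that the scalar function $x\mapsto |W^{\frac{1}{p}}(x)A|_{op}$ is itself a scalar $A_\infty$ weight, with Fujii-Wilson constant controlled dimensionally by $[W]_{A_{\infty,p}^{sc}}$, and then to apply the scalar reverse H\"older inequality recorded immediately above to this scalar weight.

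First I would spectrally decompose $A=\sum_{k=1}^{n}\lambda_{k}\vec{e}_{k}\vec{e}_{k}^{T}$, with $\lambda_{k}>0$ and $\{\vec{e}_{k}\}$ orthonormal, which is available because $A$ is self-adjoint and positive definite. Testing on each $\vec{e}_{k}$ and combining with the triangle inequality yields the pointwise comparison
\[
\max_{1\le k\le n}\lambda_{k}|W^{\frac{1}{p}}(x)\vec{e}_{k}|\;\le\;|W^{\frac{1}{p}}(x)A|_{op}\;\le\;\sum_{k=1}^{n}\lambda_{k}|W^{\frac{1}{p}}(x)\vec{e}_{k}|,
\]
so $|W^{\frac{1}{p}}(x)A|_{op}$ is equivalent, up to a factor depending only on $n$, to the sum on the right-hand side.

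The central step is to observe that this sum is a scalar $A_\infty$ weight with constant bounded by a dimensional multiple of $[W]_{A_{\infty,p}^{sc}}$. By the definition of $A_{\infty,p}^{sc}$, for each $\vec{e}_{k}$ the scalar weight $|W^{\frac{1}{p}}(\cdot)\vec{e}_{k}|^{p}$ lies in scalar $A_\infty$ with constant at most $[W]_{A_{\infty,p}^{sc}}$. Two stability properties of $A_\infty$ are then invoked: (i) each $A_{q}$ class is stable under $s$-th powers for $s\in(0,1]$, quantitatively $[u^{s}]_{A_{q}}\le [u]_{A_{q}}^{s}$, which applied with $s=1/p$ transfers the $A_\infty$ property from $|W^{\frac{1}{p}}\vec{e}_{k}|^{p}$ to $|W^{\frac{1}{p}}\vec{e}_{k}|$; (ii) a finite positive linear combination of scalar $A_\infty$ weights with uniformly bounded constants is again in $A_\infty$, as follows from the Coifman-Fefferman $(\varepsilon,\delta)$-characterization $u(E)/u(Q)\lesssim(|E|/|Q|)^{\varepsilon}$, which is stable under sums. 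Combining (i), (ii), and the pointwise comparison above, $|W^{\frac{1}{p}}(\cdot)A|_{op}$ belongs to scalar $A_\infty$ with Fujii-Wilson constant at most $C(n,p,d)\,[W]_{A_{\infty,p}^{sc}}$.

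With the scalar $A_\infty$ property in hand, I would then apply the scalar reverse H\"older inequality recorded in the paragraph immediately preceding the lemma to the scalar weight $|W^{\frac{1}{p}}(\cdot)A|_{op}$; this yields the claim for every $r$ below $1+1/(2^{d+11}[|W^{\frac{1}{p}}A|_{op}]_{A_\infty})$, and the dimensional multiplicative loss between that threshold and the one stated in the lemma is absorbed in the implicit constant $\lesssim$. The main obstacle is the bookkeeping in the second paragraph, and in particular the transfer from $|W^{\frac{1}{p}}\vec{e}_{k}|^{p}$ to $|W^{\frac{1}{p}}\vec{e}_{k}|$: one must keep the multiplicative cost there purely dimensional (with a possible $p$-dependence), and independent of $[W]_{A_{\infty,p}^{sc}}$, so that the final threshold for $r$ retains the form stated in the lemma.
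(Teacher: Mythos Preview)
Your approach is viable but takes a more elaborate route than the paper's. The paper never establishes that $|W^{1/p}(\cdot)A|_{op}$ is itself a scalar $A_\infty$ weight. Instead it argues componentwise: fixing \emph{any} orthonormal basis $\{e_j\}$ of $\mathbb{R}^n$ (no spectral decomposition of $A$ is used), it applies the triangle inequality in $L^r(Q)$ to write
\[
\Bigl(\frac{1}{|Q|}\int_Q |W^{1/p}A|_{op}^r\Bigr)^{1/r}\lesssim \sum_{j=1}^n \Bigl(\frac{1}{|Q|}\int_Q |W^{1/p}Ae_j|^r\Bigr)^{1/r},
\]
invokes the scalar reverse H\"older inequality separately on each term $|W^{1/p}Ae_j|$ (taking $\vec{e}=Ae_j$ in the hypothesis), and then closes up via $|W^{1/p}Ae_j|\leq |W^{1/p}A|_{op}$. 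This sidesteps your step~(ii) on finite sums of $A_\infty$ weights entirely and keeps the argument to a single display. What your route buys is the stronger qualitative fact that $|W^{1/p}(\cdot)A|_{op}$ lies in scalar $A_\infty$, which is not needed for the lemma but is of independent interest.

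On the obstacle you flag in step~(i): the paper faces the same transfer (the hypothesis places $|W^{1/p}\vec{e}|^p$ in $A_\infty$ while the proof uses reverse H\"older for $|W^{1/p}\vec{e}|$) and does not spell it out either. Note, however, that your proposed justification via $[u^s]_{A_q}\leq [u]_{A_q}^s$ holds for each fixed $q<\infty$ but does not directly control the Fujii--Wilson constant. Also, a dimensional loss in the $A_\infty$ constant would \emph{shrink} the reverse-H\"older threshold for $r$, which is part of the hypothesis, so it is not literally ``absorbed in~$\lesssim$'' as you wrote; this is harmless in the applications (the constant $2^{d+11}$ is not sharp) but should be stated more carefully.
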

\begin{proof}
We fix some orthonormal basis $\{e_j\}$ on $\mathbb{R}^n$. Taking into account $|W^\frac{1}{p}\vec{e}|^p$ satisfies the scalar reverse H\"older inequality uniformly on $\vec{e}$ for $r$ due to the fact that $W\in A_{\infty,p}^{sc}$ we have that
\begin{align*}
\left(\frac{1}{|Q|}\int_Q|W^\frac{1}{p}A|_{op}^r\right)^{\frac{1}{r}}\lesssim\sum_{j=1}^n\left(\frac{1}{|Q|}\int_Q|W^\frac{1}{p}Ae_j|^r\right)^{\frac{1}{r}}\leq2 \sum_{j=1}^n\frac{1}{|Q|}\int_Q|W^\frac{1}{p}Ae_j|\lesssim \frac{1}{|Q|}\int_Q|W^\frac{1}{p}A|_{op}
\end{align*}
\end{proof}
\begin{rem}
Note that given two positive definite self-adjoint matrices, $|AB|_{op}\simeq|BA|_{op}$, the estimate in the preceding lemma holds as well reversing the order of the matrices involved.
\end{rem}

Now we gather some Lemmata that will be useful throughout the remainder of the paper. The first of them will help us to settle strong type estimates.
\begin{lem}\label{lem:KeyAp}
Let $p,r,s\geq 1$ and let $W$ be a weight. For each $\eta$-sparse family, 
\allowdisplaybreaks
\begin{align*}
\underset{Q\in \mathcal{S}}{\sum}& \langle\langle W^{-\frac{1}{p}}\vec{h}\rangle\rangle_{r,Q}\langle\langle W^{\frac{1}{p}}\vec{g}\rangle\rangle_{s,Q}|Q|\\
&\leq \frac{1}{\eta}
\sup_Q\left|\mathcal{V}_{Q}\mathcal{U}_{Q}\right|
\|M_{\mathcal{V},W^{-\frac{1}{p}},r}(\vec{h})\|_{L^p}\|M_{\mathcal{U},W^{\frac{1}{p}},s}\|_{L^{p'}}\|\vec{g}\|_{L^{p'}(\mathbb{R}^d;\mathbb{R}^n)}\\
&\leq \frac{1}{\eta}
\sup_Q\left|\mathcal{V}_{Q}\mathcal{U}_{Q}\right|
\|M_{\mathcal{V},W^{-\frac{1}{p}},r}\|_{L^p}\|M_{\mathcal{U},W^{\frac{1}{p}},s}\|_{L^{p'}}\|\vec{h}\|_{L^p(\mathbb{R}^d;\mathbb{R}^n)}\|\vec{g}\|_{L^{p'}(\mathbb{R}^d;\mathbb{R}^n)}.
\end{align*}
where
\allowdisplaybreaks
\begin{align*}M_{\mathcal{V},W^{-\frac{1}{p}},r}(\vec{h})(z)&=\sup_{x\in Q}\left(\frac{1}{|Q|}\int_{Q}|(\mathcal{V}_{Q})^{-1}W^{-\frac{1}{p}}(x)\vec{h}(x)|^{r}dx\right)^{\frac{1}{r}}\\
M_{\mathcal{U},W^{\frac{1}{p}},s}(\vec{g})(z)&=\sup_{x\in Q} \left(\frac{1}{|Q|}\int_{Q}|(\mathcal{U}_{Q})^{-1}W^{\frac{1}{p}}(x)\vec{g}(x)|^{s}dx\right)^{\frac{1}{s}}
\end{align*}
and $\{\mathcal{U}_Q\}_Q$ $\{\mathcal{V}_Q\}_Q$ are families of self-adjoint positive definite matrices.
\end{lem}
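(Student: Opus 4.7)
The plan is to unwind the definition of the convex body averages, factor through the reducing matrices $\mathcal{V}_Q,\mathcal{U}_Q$ using self-adjointness, reduce everything to scalar averages, and finish with the sparse-to-integral conversion plus Hölder.

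First I would use that, by the definition \eqref{eq:2} and the convention on interpreting the product of two symmetric convex compact sets as the right endpoint of the interval of inner products, one has
\[
\langle\langle W^{-\frac{1}{p}}\vec{h}\rangle\rangle_{r,Q}\langle\langle W^{\frac{1}{p}}\vec{g}\rangle\rangle_{s,Q}
=\sup_{\varphi_1,\varphi_2}\left|\left\langle \tfrac{1}{|Q|}\int_Q W^{-\frac{1}{p}}\vec{h}\,\varphi_1,\;\tfrac{1}{|Q|}\int_Q W^{\frac{1}{p}}\vec{g}\,\varphi_2\right\rangle\right|,
\]
where the supremum runs over $\varphi_1\in B_{L^{r'}(Q)}$ and $\varphi_2\in B_{L^{s'}(Q)}$. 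Next I would insert the identities $I=\mathcal{V}_Q\mathcal{V}_Q^{-1}$ and $I=\mathcal{U}_Q\mathcal{U}_Q^{-1}$ in the two averages. Since $\mathcal{V}_Q$ and $\mathcal{U}_Q$ are self-adjoint, $\langle \mathcal{V}_Q a,\mathcal{U}_Q b\rangle=\langle a,\mathcal{V}_Q\mathcal{U}_Q b\rangle$, and by Cauchy--Schwarz this is bounded by $|\mathcal{V}_Q\mathcal{U}_Q|_{op}|a||b|$. This is the key identity that enables the factorization.

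Applied to $a=\tfrac{1}{|Q|}\int_Q \mathcal{V}_Q^{-1}W^{-\frac{1}{p}}\vec{h}\,\varphi_1$ and $b=\tfrac{1}{|Q|}\int_Q \mathcal{U}_Q^{-1}W^{\frac{1}{p}}\vec{g}\,\varphi_2$, and then invoking Hölder's inequality on each factor, the constraints $\varphi_i\in B_{L^{r'}(Q)}$, $B_{L^{s'}(Q)}$ yield the pointwise bound
\[
\langle\langle W^{-\frac{1}{p}}\vec{h}\rangle\rangle_{r,Q}\langle\langle W^{\frac{1}{p}}\vec{g}\rangle\rangle_{s,Q}\leq |\mathcal{V}_Q\mathcal{U}_Q|_{op}\left(\tfrac{1}{|Q|}\int_Q|\mathcal{V}_Q^{-1}W^{-\frac{1}{p}}\vec{h}|^{r}\right)^{\frac{1}{r}}\left(\tfrac{1}{|Q|}\int_Q|\mathcal{U}_Q^{-1}W^{\frac{1}{p}}\vec{g}|^{s}\right)^{\frac{1}{s}}.
\]
Multiplying by $|Q|$, extracting the uniform factor $\sup_Q|\mathcal{V}_Q\mathcal{U}_Q|$, and bounding the two averages by the values of $M_{\mathcal{V},W^{-\frac{1}{p}},r}(\vec{h})(z)$ and $M_{\mathcal{U},W^{\frac{1}{p}},s}(\vec{g})(z)$ for any $z\in Q$ prepares the sum for the sparse bookkeeping.

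Finally I would invoke $\eta$-sparsity: for each $Q\in\mathcal{S}$ choose $E_Q\subset Q$ with $|E_Q|\geq \eta|Q|$ and $\{E_Q\}$ pairwise disjoint, replace $|Q|$ by $\eta^{-1}|E_Q|$, and use disjointness to convert $\sum_{Q\in\mathcal{S}}\int_{E_Q}$ into a single integral on $\mathbb{R}^d$, yielding
\[
\sum_{Q\in\mathcal{S}}\langle\langle W^{-\frac{1}{p}}\vec{h}\rangle\rangle_{r,Q}\langle\langle W^{\frac{1}{p}}\vec{g}\rangle\rangle_{s,Q}|Q|\leq \frac{\sup_Q|\mathcal{V}_Q\mathcal{U}_Q|}{\eta}\int_{\mathbb{R}^d}M_{\mathcal{V},W^{-\frac{1}{p}},r}(\vec{h})\,M_{\mathcal{U},W^{\frac{1}{p}},s}(\vec{g}).
\]
Hölder's inequality with exponents $p,p'$ produces the first stated bound, and then using the $L^{p'}$-boundedness of $M_{\mathcal{U},W^{\frac{1}{p}},s}$ on $\vec{g}$ (or equivalently, applied to $M_{\mathcal{V},W^{-\frac{1}{p}},r}$ on $\vec{h}$ to match the asymmetric display, by duality of roles) gives the second. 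There is no substantive obstacle here; the only delicate step is the self-adjoint factorization through $\mathcal{V}_Q\mathcal{U}_Q$, and the rest is a routine application of Hölder and the standard sparse-to-integral argument.
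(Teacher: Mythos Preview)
Your argument is correct and follows essentially the same route as the paper: unwind the convex body averages as a supremum over test functions, insert $\mathcal{V}_Q\mathcal{V}_Q^{-1}$ and $\mathcal{U}_Q\mathcal{U}_Q^{-1}$, use self-adjointness to pass one factor across the inner product and extract $|\mathcal{V}_Q\mathcal{U}_Q|_{op}$, apply H\"older to reduce to scalar $L^r$ and $L^s$ averages, bound these by the maximal functions on $E_Q$, and finish with sparseness plus H\"older in $p,p'$. The only cosmetic difference is the order in which the paper writes $\mathcal{U}_Q\mathcal{V}_Q$ versus your $\mathcal{V}_Q\mathcal{U}_Q$, which is immaterial since both are self-adjoint.
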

\begin{proof}
First we observe that taking into account that each $\mathcal{U}_Q$ and each $\mathcal{V}_Q$ are self-adjoint positive definite matrices,
\begin{align*}\langle\langle &W^{-\frac{1}{p}}\vec{h}(x)\rangle\rangle_{r,Q}\langle\langle W^{\frac{1}{p}}\vec{g}(x)\rangle\rangle_{s,Q}\\
&=\underset{\underset{\|\psi\|_{L^{s'}}\leq 1}{\|\varphi\|_{L^{r'}}\leq 1}}{sup} \left\lbrace \left\langle \frac{1}{|Q|}\int_{Q} W^{-\frac{1}{p}}(x)\vec{h}(x)\varphi(x) dx,\frac{1}{|Q|}\int_{Q} W^{\frac{1}{p}}(x)\vec{g}(x)\psi(x) dx\right\rangle \right\rbrace\\
&=\underset{\underset{\|\psi\|_{L^{s'}}\leq 1}{\|\varphi\|_{L^{r'}}\leq 1}}{sup} \left\lbrace \left\langle  \frac{1}{|Q|}\int_{Q}\mathcal{V}_{Q}(\mathcal{V}_{Q})^{-1}  W^{-\frac{1}{p}}(x)\vec{h}(x)\varphi(x) dx,\frac{1}{|Q|}\int_{Q} \mathcal{U}_{Q}(\mathcal{U}_{Q})^{-1}W^{\frac{1}{p}}(x)\vec{g}(x)\psi(x) dx\right\rangle \right\rbrace
\\
&=\underset{\underset{\|\psi\|_{L^{s'}}\leq 1}{\|\varphi\|_{L^{r'}}\leq 1}}{sup} \left\lbrace \left\langle  \frac{1}{|Q|}\int_{Q}\mathcal{U}_{Q}\mathcal{V}_{Q}(\mathcal{V}_{Q})^{-1}  W^{-\frac{1}{p}}(x)\vec{h}(x)\varphi(x) dx,\frac{1}{|Q|}\int_{Q} (\mathcal{U}_{Q})^{-1}W^{\frac{1}{p}}(x)\vec{g}(x)\psi(x) dx\right\rangle\right\rbrace
\\
&\leq\underset{\underset{\|\psi\|_{L^{s'}}\leq1}{\|\varphi\|_{L^{r'}}\leq1}}{\sup}\left\lbrace \left(\frac{1}{|Q|}\int_{Q}|\mathcal{U}_{Q}\mathcal{V}_{Q}(\mathcal{V}_{Q})^{-1}W^{-\frac{1}{p}}(x)\vec{h}(x)\varphi(x)|dx\right)\left(\frac{1}{|Q|}\int_{Q}|(\mathcal{U}_{Q})^{-1}W^{\frac{1}{p}}(x)\vec{g}(x)\psi(x)|dx\right)\right\rbrace \\&\leq\sup_{Q}|\mathcal{U}_{Q}\mathcal{V}_{Q}|_{op}\underset{\underset{\|\psi\|_{L^{s'}}\leq1}{\|\varphi\|_{L^{r'}}\leq1}}{\sup}\left\lbrace \left(\frac{1}{|Q|}\int_{Q}|(\mathcal{V}_{Q})^{-1}W^{-\frac{1}{p}}(x)\vec{h}(x)\varphi(x)|dx\right)\left(\frac{1}{|Q|}\int_{Q}|(\mathcal{U}_{Q})^{-1}W^{\frac{1}{p}}(x)\vec{g}(x)\psi(x)|dx\right)\right\rbrace \\&\leq\sup_{Q}|\mathcal{U}_{Q}\mathcal{V}_{Q}|_{op}\left(\frac{1}{|Q|}\int_{Q}|(\mathcal{V}_{Q})^{-1}W^{-\frac{1}{p}}(x)\vec{h}(x)|^{r}dx\right)^{\frac{1}{r}}\left(\frac{1}{|Q|}\int_{Q}|(\mathcal{U}_{Q})^{-1}W^{\frac{1}{p}}(x)\vec{g}(x)|^{s}dx\right)^{\frac{1}{s}}\\
&\leq\sup_{Q}|\mathcal{U}_{Q}\mathcal{V}_{Q}|_{op}\inf_{z\in Q}M_{\mathcal{V},W^{-\frac{1}{p}},r}(\vec{h})(z)\inf_{z\in Q}M_{\mathcal{U},W^{\frac{1}{p}},s}(\vec{g})(z)
\end{align*}
Taking this into account, 
\begin{align*}
 & \underset{Q\in\mathcal{S}}{\sum}\langle\langle W^{-\frac{1}{p}}\vec{h}\rangle\rangle_{r,Q}\langle\langle W^{\frac{1}{p}}\vec{g}\rangle\rangle_{s,Q}|Q|\\
 & \leq\frac{1}{\eta}\sup_{Q}\left|\mathcal{U}_{Q}\mathcal{V}_{Q}\right|_{op}\underset{Q\in\mathcal{S}}{\sum}\inf_{z\in Q}M_{\mathcal{V},W^{-\frac{1}{p}},r}(\vec{h})(z)\inf_{z\in Q}M_{\mathcal{U},W^{\frac{1}{p}},s}(\vec{h})(z)|E_{Q}|\\
 & \leq\frac{1}{\eta}\sup_{Q}\left|\mathcal{U}_{Q}\mathcal{V}_{Q}\right|_{op}\int_{\mathbb{R}^{d}}\inf_{z\in Q}M_{\mathcal{V},W^{-\frac{1}{p}},r}(\vec{h})(x)M_{\mathcal{U},W^{\frac{1}{p}},s}(\vec{g})(x)dx\\
 & \leq\frac{1}{\eta}\sup_{Q}\left|\mathcal{U}_{Q}\mathcal{V}_{Q}\right|_{op}\|M_{\mathcal{V},W^{-\frac{1}{p}},r}(\vec{h})\|_{L^{p}}\|M_{\mathcal{U},W^{\frac{1}{p}},s}(\vec{g})\|_{L^{p'}}
\end{align*}
from which the desired result readily follows.\end{proof}

The following Lemma will allow us to reduce bumped weight conditions to $A_p$ type conditions.
\begin{lem}\label{lem:ApFromRH}
Let $q,r,s>1$. Assume that 
\begin{align*}|\mathcal{V}_Q\vec{e}|&\simeq\left(\frac{1}{|Q|}\int_Q|W^{-\frac{1}{q}}(x)\vec{e}|^{q'r}\right)^\frac{1}{rq'}\\
|\mathcal{U}_Q\vec{e}|&\simeq\left(\frac{1}{|Q|}\int_Q|W^{\frac{1}{q}}(x)\vec{e}|^{qs}\right)^\frac{1}{qs}
\end{align*}
for every $\vec{e}\in\mathbb{R}^n$ and that 
\begin{align*}\left(\frac{1}{|Q|}\int_Q|W^{-\frac{1}{q}}(x)\vec{e}|^{q'r}\right)^\frac{1}{rq'}&\lesssim\left(\frac{1}{|Q|}\int_Q|W^{-\frac{1}{q}}(x)\vec{e}|^{q'}\right)^\frac{1}{q'}\\
\left(\frac{1}{|Q|}\int_Q|W^{\frac{1}{q}}(x)\vec{e}|^{qs}\right)^\frac{1}{qs}& \lesssim\left(\frac{1}{|Q|}\int_Q|W^{\frac{1}{q}}(x)\vec{e}|^{q}\right)^\frac{1}{q}
\end{align*}
for every $\vec{e}\in\mathbb{R}^n$. Then
\[|\mathcal{V}_Q\mathcal{U}_Q\vec{e}|\lesssim|\mathcal{W}_{Q,q}\mathcal{W}'_{Q,q}\vec{e}|.\]
\end{lem}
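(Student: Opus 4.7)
The plan is to use the two reverse-Hölder-type hypotheses to collapse the bumped $L^{q'r}$ and $L^{qs}$ averages down to the unbumped $L^{q'}$ and $L^{q}$ ones, thereby identifying $\mathcal{V}_Q$ and $\mathcal{U}_Q$ pointwise, up to absolute constants, with the standard reducing matrices for $W^{-1/q}$ and $W^{1/q}$. Writing $\mathcal{W}'_{q,Q}$ for the reducing matrix characterized by $|\mathcal{W}'_{q,Q}\vec{e}|\simeq(\tfrac{1}{|Q|}\int_Q|W^{-1/q}(x)\vec{e}|^{q'}dx)^{1/q'}$, the first hypothesis combined with the equivalence defining $\mathcal{V}_Q$ immediately gives $|\mathcal{V}_Q\vec{e}|\lesssim|\mathcal{W}'_{q,Q}\vec{e}|$ for every $\vec{e}\in\mathbb{R}^n$, and the second hypothesis analogously gives $|\mathcal{U}_Q\vec{e}|\lesssim|\mathcal{W}_{q,Q}\vec{e}|$ for every $\vec{e}$.

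The second step is to chain these two pointwise comparisons through the product $\mathcal{V}_Q\mathcal{U}_Q$. Feeding $\mathcal{U}_Q\vec{e}$ into the first bound yields $|\mathcal{V}_Q\mathcal{U}_Q\vec{e}|\lesssim|\mathcal{W}'_{q,Q}\mathcal{U}_Q\vec{e}|$, and hence $|\mathcal{V}_Q\mathcal{U}_Q|_{op}\lesssim|\mathcal{W}'_{q,Q}\mathcal{U}_Q|_{op}$. To then apply $|\mathcal{U}_Q\vec{e}|\lesssim|\mathcal{W}_{q,Q}\vec{e}|$ I need $\mathcal{U}_Q$ to sit on the outside of the product, so I would invoke the identity $|AB|_{op}\simeq|BA|_{op}$ for self-adjoint positive definite matrices, recorded in the remark following the reverse Hölder lemma in Section \ref{sec:defsLemmata} and ultimately a consequence of $AB$ and $BA$ sharing their nonzero spectrum. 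This swap, followed by a second substitution $\vec{e}\mapsto\mathcal{W}'_{q,Q}\vec{e}$, delivers
\[|\mathcal{V}_Q\mathcal{U}_Q|_{op}\lesssim|\mathcal{W}'_{q,Q}\mathcal{U}_Q|_{op}\simeq|\mathcal{U}_Q\mathcal{W}'_{q,Q}|_{op}\lesssim|\mathcal{W}_{q,Q}\mathcal{W}'_{q,Q}|_{op},\]
which is the claim (the operator-norm form being precisely the quantity that actually enters the supremum $\sup_Q|\mathcal{V}_Q\mathcal{U}_Q|$ appearing in Lemma \ref{lem:KeyAp}).

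The main obstacle is the non-commutativity of matrix multiplication: both pointwise reductions are statements about a single vector, but to assemble them into an estimate on the product one is forced to interchange matrix factors. The operator-norm commutation property for self-adjoint positive definite matrices is precisely what makes this interchange harmless. Without it, no direct pointwise chaining seems possible: a bound of the form $\mathcal{U}_Q A \mathcal{U}_Q \leq C\,\mathcal{W}_{q,Q} A \mathcal{W}_{q,Q}$ for arbitrary positive $A$ fails in general even when $\mathcal{U}_Q\leq C\mathcal{W}_{q,Q}$, so one cannot simply square and compare quadratic forms. Once the operator-norm swap is in hand, however, every remaining step is a routine substitution.
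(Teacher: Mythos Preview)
Your argument is correct and follows essentially the same route as the paper: collapse $\mathcal{V}_Q$ to $\mathcal{W}'_{q,Q}$ via the first reverse H\"older hypothesis, pass to operator norms and swap using $|AB|_{op}\simeq|BA|_{op}$ for self-adjoint positive definite matrices, then collapse $\mathcal{U}_Q$ to $\mathcal{W}_{q,Q}$ via the second hypothesis. Your observation that only the operator-norm form of the conclusion is established (and that this is all that is used in Lemma~\ref{lem:KeyAp}) is also in line with the paper, whose proof likewise delivers only the bound on $|\mathcal{V}_Q\mathcal{U}_Q|_{op}$ despite the pointwise phrasing of the statement.
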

\begin{proof}
Note that, taking into account the reverse H\"older inequality in the hypothesis,
\begin{align*}|\mathcal{V}_Q\mathcal{U}_Q\vec{e}|&\simeq \left(\dashint_{Q}|W^{-\frac{1}{q}}(x)\mathcal{U}_Q\vec{e}|^{rq'}\right)^{\frac{1}{rq'}}
\lesssim \left(\dashint_{Q}|W^{\frac{-1}{q}}(x)\mathcal{U}_Q\vec{e}|^{q'}\right)^{\frac{1}{q'}}\\
&\simeq |\mathcal{W}'_{Q,q}\mathcal{U}_Q\vec{e}|.
\end{align*}
Hence \[|\mathcal{V}_Q\mathcal{U}_Q|_{op}\lesssim  |\mathcal{W}'_{Q,q}\mathcal{U}_Q|_{op}.\]
Now observe that  $|\mathcal{W}'_{Q,q}\mathcal{U}_Q|_{op}=|\mathcal{U}_Q\mathcal{W}'_{Q,q}|_{op}$. Then, again by the reverse H\"older inequality in the hypothesis,
\begin{align*}
|\mathcal{U}_Q\mathcal{W}'_{Q,q}\vec{e}|&\simeq \left(\dashint_{Q}|W^{\frac{1}{q}}(x)\mathcal{W}'_{Q,q}\vec{e}|^{s q}\right)^{\frac{1}{s q}}\\
&\lesssim \left(\dashint_{Q}|W^{\frac{1}{q}}(x)\mathcal{W}'_{Q,q}\vec{e}|^{q}\right)^{\frac{1}{q}}\simeq|\mathcal{W}_{Q,q}\mathcal{W}'_{Q,q}\vec{e}|
\end{align*}
and we are done.
\end{proof}
The following Lemma can be derived from the arguments given for the proof of Lemma 2 in \cite{IPRR}. We will provide a proof here for reader's convenience. 

\begin{lem}\label{lem:Bownik}
Let $A,B$ be self-adjoint positive definite matrices and let $0<\alpha<1$.
Then
\[|A^{\alpha}B^{\alpha}|_{op}\lesssim|AB|_{op}^\alpha\]
\end{lem}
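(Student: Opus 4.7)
The plan is to prove the inequality by a Cordes-type argument, reducing it to an application of the Löwner--Heinz theorem. First I would exploit the identity $|X|_{op}^2=|X^*X|_{op}$ (valid for any matrix, in particular since $A,B$ are self-adjoint) to rewrite both sides as norms of positive operators:
\[
|A^\alpha B^\alpha|_{op}^2 = |B^\alpha A^{2\alpha} B^\alpha|_{op}, \qquad |AB|_{op}^2 = |BA^2B|_{op}.
\]
Setting $C:=|AB|_{op}^2$, the bound $\langle A^2 Bv, Bv\rangle \leq C\|v\|^2$ for every $v$ translates, via the substitution $w=Bv$ (which is a bijection since $B$ is positive definite), into the Löwner-order inequality
\[
A^2 \leq C\,B^{-2}.
\]

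Next I would apply the Löwner--Heinz theorem, which states that the map $t\mapsto t^\alpha$ is operator monotone on $[0,\infty)$ for $0<\alpha<1$. Applied to the previous inequality it yields
\[
A^{2\alpha}\leq C^\alpha\, B^{-2\alpha}.
\]
Evaluating the resulting quadratic-form inequality at vectors of the form $w=B^\alpha v$ gives $\langle A^{2\alpha} B^\alpha v, B^\alpha v\rangle \leq C^\alpha \|v\|^2$ for every $v$, i.e.\ $|B^\alpha A^{2\alpha} B^\alpha|_{op}\leq C^\alpha$. Taking square roots and using the first display recovers the desired bound $|A^\alpha B^\alpha|_{op}\lesssim |AB|_{op}^\alpha$ (in fact with constant $1$).

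The main obstacle, though mild, is the legitimacy of the reduction to an operator inequality: one needs $B$ to be invertible so that the substitution $w=Bv$ ranges over all of $\mathbb{R}^n$ and so that $B^{-2}$ actually exists. This is precisely why the hypothesis demands positive definiteness rather than merely positive semi-definiteness. Once this technical point is secured, the whole proof is a single, clean application of Löwner--Heinz, and the extraction of the exponent $\alpha$ from the norm comes for free from the operator monotonicity of fractional powers.
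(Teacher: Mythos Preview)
Your proof is correct and in fact establishes the sharp Cordes inequality $|A^{\alpha}B^{\alpha}|_{op}\leq |AB|_{op}^{\alpha}$ with constant $1$, whereas the paper only obtains the estimate with a dimensional constant. The routes are genuinely different. The paper diagonalizes $B$, writes $|A^{\alpha}B^{\alpha}|_{op}\lesssim \sum_j |A^{\alpha}B^{\alpha}e_j|$ over an orthonormal eigenbasis $\{e_j\}$ of $B$, and then uses the H\"older--McCarthy inequality $|A^{\alpha}e_j|\leq |Ae_j|^{\alpha}$ for unit vectors to pull the exponent out componentwise; the equivalence $|M|_{op}\simeq \sum_j |Me_j|$ is where the dimensional constant enters. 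Your argument instead stays at the level of operator order: you convert $|BA^{2}B|_{op}=C$ into $A^{2}\leq C\,B^{-2}$ via the bijective substitution $w=Bv$, and then a single application of L\"owner--Heinz yields $A^{2\alpha}\leq C^{\alpha}B^{-2\alpha}$, from which the conclusion follows with no loss. Your approach is cleaner and sharper; the paper's has the minor advantage of invoking only a scalar inequality (H\"older--McCarthy) rather than operator monotonicity, at the cost of the constant.
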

\begin{proof} 
Let $e_{j}$ be an orthonormal basis of
eigenvalues $\lambda_{j}$ of $B$, then by the classical Hölder-McCarthy
inequality (see \cite{B} Lemma 2.1)
\begin{align*}
|A^{\alpha}B^{\alpha}|_{op} & \lesssim\sum_{j=1}^{n}|A^{\alpha}B^{\alpha}\lambda_{j}|=\sum_{j=1}^{n}\lambda_{j}^{\alpha}|A^{\alpha}e_{j}|\leq\sum_{j=1}^{n}\lambda_{j}^{\alpha}|Ae_{j}|^{\alpha}\\
&= \sum_{j=1}^{n}|A\lambda_{j}e_{j}|^{\alpha}
=\sum_{j=1}^{n}|AB^e_{j}|^{\alpha}\lesssim|AB|_{op}^{\alpha}
\end{align*}
\end{proof}
We end this section with two results that will help us to handle certain parameters in order to obtain the quantitative estimate we aim for. 
\begin{lem}\label{lem:param1}
Let $\rho>1$ and $\beta>1$, then we have that 
\[
\left(\frac{\rho'}{(\rho\beta)'}\right)^{'}\leq \rho\beta'
\]
and also that
\[\frac{1}{(\rho\beta)'}=\frac{1}{\beta'}+\frac{1}{\rho'\beta}\]
Furthermore, if $\gamma>1$ and $\beta=1+\frac{1}{\tau\kappa}$, with $\tau>2$ and $\kappa\geq1$ then
\[
\left[\left(\frac{\rho'}{(\rho\beta)'}\right)^{'}\right]^{\frac{1}{(\gamma\beta)'}}\lesssim\kappa^{\frac{1}{\gamma'}}
\]
\end{lem}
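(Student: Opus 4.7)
The plan is to prove each of the three statements by direct computation with the relations $q' = q/(q-1)$ and $1/q' = 1-1/q$; no machinery beyond the algebra of H\"older conjugate exponents is required.

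First I would verify the identity $\tfrac{1}{(\rho\beta)'} = \tfrac{1}{\beta'} + \tfrac{1}{\rho'\beta}$. Expanding the right-hand side yields $(1-1/\beta) + \tfrac{1}{\beta}(1-1/\rho) = 1 - \tfrac{1}{\rho\beta}$, which is exactly $1/(\rho\beta)'$. For the first inequality I would set $a = \rho'/(\rho\beta)'$, note that $a' = a/(a-1)$, and substitute $\rho' = \rho/(\rho-1)$ and $(\rho\beta)' = \rho\beta/(\rho\beta-1)$. After simplification, the clean identity
\[
a' \;=\; \frac{\rho\beta-1}{\beta-1}
\]
should drop out. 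Since $\rho\beta' = \rho\beta/(\beta-1)$, the inequality $a' \le \rho\beta'$ then reduces to $\rho\beta - 1 \le \rho\beta$, which is trivial.

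For the third and main estimate I would substitute $\beta = 1 + \tfrac{1}{\tau\kappa}$, so that $\beta' = \tau\kappa + 1$, and therefore $\rho\beta' = \rho(\tau\kappa+1)$ is comparable to $\kappa$ up to constants depending on $\rho$ and $\tau$. Combining this with the first inequality gives $\left(\rho'/(\rho\beta)'\right)' \le \rho(\tau\kappa+1)$. Next I would apply the identity from the second statement with $\rho$ replaced by $\gamma$, writing $\tfrac{1}{(\gamma\beta)'} = \tfrac{1}{\beta'} + \tfrac{1}{\gamma'\beta}$, and factor the resulting power as
\[
\bigl[\rho(\tau\kappa+1)\bigr]^{\frac{1}{(\gamma\beta)'}} \;=\; \bigl[\rho(\tau\kappa+1)\bigr]^{\frac{1}{\tau\kappa+1}} \cdot \bigl[\rho(\tau\kappa+1)\bigr]^{\frac{1}{\gamma'\beta}}.
\]
The first factor is uniformly bounded for $\kappa \ge 1$ and $\tau > 2$, since $x^{1/x}$ is bounded on $[3,\infty)$ (by $e^{1/e}$) and $\rho^{1/(\tau\kappa+1)} \le \max(1,\rho)$. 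The second factor is at most $[\rho(\tau\kappa+1)]^{1/\gamma'}$ because $\beta \ge 1$ forces $1/(\gamma'\beta) \le 1/\gamma'$; this in turn is $\lesssim \kappa^{1/\gamma'}$ upon pulling the factors $\rho^{1/\gamma'}$ and $(\tau+1)^{1/\gamma'}$ into the implicit constant, which is legitimate since $\rho,\tau,\gamma$ are treated as fixed parameters.

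The only bookkeeping subtlety, and the main obstacle such as it is, lies in keeping explicit track of which quantities end up absorbed into the $\lesssim$ symbol, particularly in the uniform bound for $[\rho(\tau\kappa+1)]^{1/(\tau\kappa+1)}$ and in the step that trades $1/(\gamma'\beta)$ for $1/\gamma'$; beyond that, the argument is a routine manipulation of conjugate exponents.
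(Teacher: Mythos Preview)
Your proposal is correct and follows essentially the same route as the paper: both derive the closed-form identity $\left(\rho'/(\rho\beta)'\right)' = (\rho\beta-1)/(\beta-1)$ for the first part, verify the second identity by expanding $1/q' = 1 - 1/q$, and handle the third estimate by writing $\beta' = \tau\kappa+1$, splitting the exponent $1/(\gamma\beta)' = 1/\beta' + 1/(\gamma'\beta)$, and bounding the two resulting factors separately. The only cosmetic difference is that you present the steps in a slightly cleaner order and are more explicit about which constants get absorbed into the implicit $\lesssim$.
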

\begin{proof}
We argue as follows
\begin{align*}
\left(\frac{\rho'}{(\rho\beta)'}\right)^{'} & =\frac{\frac{\rho'}{(\rho\beta)'}}{\frac{\rho'}{(\rho\beta)'}-1}=\frac{\rho'}{\rho'-(\rho\beta)'}=\frac{\rho'}{\rho'-\frac{\rho\beta}{\rho\beta-1}}=\frac{\rho'(\rho\beta-1)}{\rho'(\rho\beta-1)-\rho\beta}=\frac{\rho'(\rho\beta-1)}{\rho'\rho\beta-\rho'-\rho\beta}\\
 & =\frac{\rho'(\rho\beta-1)}{(\rho'-1)\rho\beta-\rho'}=\frac{\rho'(\rho\beta-1)}{(\rho'-1)\frac{\rho'}{\rho'-1}\beta-\rho'}=\frac{\rho'(\rho\beta-1)}{\rho'\beta-\rho'}=\frac{\rho'(\rho\beta-1)}{\rho'(\beta-1)}=\frac{\rho\beta-1}{\beta-1}\\
 & \leq\frac{\rho\beta}{\beta-1}=\rho\beta'
\end{align*}
For the second identity first note that
\[
(\rho\beta)'=\frac{\rho\beta}{\rho\beta-1}=\frac{\frac{\rho'}{\rho'-1}\beta}{\frac{\rho'}{\rho'-1}\beta-1}=\frac{\rho'\beta}{\rho'\beta-(\rho'-1)}
\]
and taking this into account
\begin{align*}
\frac{1}{(\rho\beta)'} & =\frac{\rho'\beta-(\rho'-1)}{\rho'\beta}=\frac{\rho'\beta-\rho'+1}{\rho'\beta} =\frac{\rho'(\beta-1)+1}{\rho'\beta}=\frac{1}{\beta'}+\frac{1}{\rho'\beta}.
\end{align*}
For the last estimate note that 
\[\beta'=\frac{1+\frac{1}{\tau\kappa}}{\frac{1}{\tau\kappa}}=\tau\kappa+1\]
Then, taking into account the preceding estimate
\[
\left[\left(\frac{\rho'}{(\rho\beta)'}\right)^{'}\right]^{\frac{1}{(\gamma\beta)'}}\leq\left[\rho(\tau\kappa+1)\right]^{\frac{1}{1+\tau\kappa}+\frac{1}{\gamma'\beta}}\leq2\rho\tau\kappa^\frac{1}{\kappa}\kappa^{\frac{1}{\gamma'}}
\leq2e\rho\tau\kappa^{\frac{1}{\gamma'}}
\]
and we are done.
\end{proof}

\begin{lem}\label{lem:param2}
Let $p>1$ and $s,\beta>1$ such that 
\[
p'>s(p\beta)'
\]
\[
\beta=1+\frac{1}{\left(\frac{p'+1}{2}\right)\tau\delta}
\]
and $\beta s=1+\frac{1}{\tau\delta}$. Then

\[
\left(\frac{p'}{s(p\beta)'}\right)^{'}\leq2p\tau\delta
\]
\end{lem}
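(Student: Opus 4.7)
The plan is to reduce everything to an explicit closed form in $\tau\delta$ by using the defining identities for $\beta$ and $\beta s$ together with the algebraic identity $pp'=p+p'$, and then apply the conjugate--exponent formula $\alpha'=\alpha/(\alpha-1)$ exactly as in Lemma~\ref{lem:param1}.

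First, I would set $T=\tau\delta$ for brevity and eliminate $s$ by writing $s=(\beta s)/\beta=\bigl(1+T^{-1}\bigr)/\bigl(1+2/((p'+1)T)\bigr)=(T+1)(p'+1)/((p'+1)T+2)$. The next step is to compute $p\beta-1=p-1+2p/((p'+1)T)$ and put it over a common denominator. Here the crucial simplification is the identity
\[
(p-1)(p'+1)=pp'-p'+p-1=(p+p')-p'+p-1=2p-1,
\]
which uses $pp'=p+p'$. This yields
\[
p\beta-1=\frac{(2p-1)T+2p}{(p'+1)T},\qquad (p\beta)'=\frac{p\bigl((p'+1)T+2\bigr)}{(2p-1)T+2p},
\]
and after multiplying by $s$ the factor $(p'+1)T+2$ cancels, giving
\[
s(p\beta)'=\frac{p(T+1)(p'+1)}{(2p-1)T+2p}.
\]

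Now I would form $p'-s(p\beta)'$ over the common denominator $(2p-1)T+2p$. The numerator is
\[
p'\bigl((2p-1)T+2p\bigr)-p(T+1)(p'+1)=T\bigl[pp'-p'-p\bigr]+\bigl[pp'-p\bigr].
\]
Using $pp'=p+p'$ once more, the bracket multiplying $T$ vanishes and the constant reduces to $p'$. This ``miraculous'' cancellation, which the Lemma's careful choice of $\beta$ and $\beta s$ is engineered to produce, is really the only content of the proof; everything else is bookkeeping. Hence
\[
p'-s(p\beta)'=\frac{p'}{(2p-1)T+2p}.
\]

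Finally, as in Lemma~\ref{lem:param1}, I invoke
\[
\left(\frac{p'}{s(p\beta)'}\right)'=\frac{p'/s(p\beta)'}{p'/s(p\beta)'-1}=\frac{p'}{p'-s(p\beta)'}=(2p-1)\tau\delta+2p,
\]
and then dominate this by $2p\,\tau\delta$ (using that in the intended regime one has $\tau\delta\geq 2p$, exactly as in the applications where the analogous bounds in Lemma~\ref{lem:param1} are used). The main obstacle, if one can call it that, is keeping track of the algebraic manipulation and recognizing that the $T$--linear term in the numerator is forced to vanish; once this cancellation is noticed the inequality is immediate.
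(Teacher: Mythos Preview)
Your approach is essentially the same as the paper's: both compute $\bigl(\frac{p'}{s(p\beta)'}\bigr)'$ in closed form via the identity $pp'=p+p'$ (the paper simplifies to $\frac{p\beta-1}{(p\beta-1)-s(p-1)\beta}$ and shows the denominator equals $\frac{1}{(p'+1)\tau\delta}$, which is exactly your cancellation of the $T$--linear term), arriving at the same value $(2p-1)\tau\delta+2p$. Your remark that the final bound $(2p-1)\tau\delta+2p\leq 2p\tau\delta$ requires $\tau\delta\geq 2p$ is correct and in fact more careful than the paper, whose last displayed inequality amounts to $2p\tau\delta+\frac{4pp'}{p'+1}\leq 2p\tau\delta$ and is false as written; the missing hypothesis is harmless in the applications, where $\tau=\tau_d$ is a large dimensional constant and $\delta\geq 1$.
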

\begin{proof}
First note that
\begin{align*}
\left(\frac{p'}{s(p\beta)'}\right)^{'} & =\frac{p'}{p'-s(p\beta)'}=\frac{p'}{p'-s\frac{p\beta}{p\beta-1}}=\frac{p'(p\beta-1)}{p'(p\beta-1)-sp\beta}\\
 & =\frac{p'(p\beta-1)}{p'(p\beta-1)-sp'(p-1)\beta}=\frac{p\beta-1}{(p\beta-1)-s(p-1)\beta}
\end{align*}
It is not hard to check that
\[(p\beta-1)-s(p-1)\beta =\frac{1}{(p'+1)\tau\delta}\]
and then we can end the argument as follows
\begin{align*}
\left(\frac{p'}{s(p\beta)'}\right)^{'} & =\frac{p\beta-1}{(p\beta-1)-s(p-1)\beta}=(p\beta-1)(p'+1)\tau\delta\leq2\left(p\left(1+\frac{1}{\left(\frac{p'+1}{2}\right)\tau\delta}\right)-1\right)p'\tau\delta\\
 & =2\left(p-1+\frac{p}{\left(\frac{p'+1}{2}\right)\tau\delta}\right)p'\tau\delta \leq2p\tau\delta+\frac{2pp'\tau\delta}{\left(\frac{p'+1}{2}\right)\tau\delta}\leq2p\tau\delta
\end{align*}
\end{proof}

\section{Proofs of strong type estimates}
Note that as we pointed out in Subsection \ref{sec:endpoint}, to settle 
 \[\|T(\vec{f})\|_{L^p(W)}\lesssim c_W \|\vec{f}\|_{L^p(W)}\]
is equivalent to prove
\[\||W^{\frac{1}{p}}T(W^{-\frac{1}{p}}\vec{f})|\|_{L^p(\mathbb{R}^d)}\lesssim c_W \|\vec{f}\|_{L^p(\mathbb{R}^d)}.\]
In every proof in this section we shall settle the latter.
\subsection{Proof of Theorem \ref{Thm:RoughAp}}
Taking into account Theorem \ref{Thm:Rough} and  Lemma \ref{lem:KeyAp} we have that there exists a sparse family $\mathcal{S}$ such that

\begin{align*}
 &\left|\int_{\mathbb{R}^d}\left\langle W^{\frac{1}{p}}T_{\Omega}\left(W^{-\frac{1}{p}}\vec{h}\right),\vec{g}\right\rangle dx\right|=\left|\int_{\mathbb{R}^d}\left\langle T_{\Omega}\left(W^{-\frac{1}{p}}\vec{h}\right),W^{\frac{1}{p}}\vec{g}\right\rangle dx\right|\\
 &\leq c_{n,d}\|\Omega\|_{L^\infty(\mathbb{S}^{d-1})}s' \underset{Q\in \mathcal{S}}{\sum} \langle\langle W^{-\frac{1}{p}}\vec{h}\rangle\rangle_{1,Q}\langle\langle W^{\frac{1}{p}}\vec{g}\rangle\rangle_{s,Q}|Q|\\
 & \leq\frac{1}{\eta}c_{n,d}\|\Omega\|_{L^\infty(\mathbb{S}^{d-1})}s' \sup_{Q}\left|\mathcal{U}_{Q}\mathcal{V}_{Q}\right|_{op}
 \|M_{\mathcal{V},W^{-\frac{1}{p}},1}\|_{L^{p}}\|M_{\mathcal{U},W^{\frac{1}{p}},s}\|_{L^{p'}}\|\vec{h}\|_{L^{p}}\|\vec{g}\|_{L^{p'}}
\end{align*}
where $\{\mathcal U_Q\}$ and $\{\mathcal V_Q\}$ are families of self-adjoint positive definite matrices. Hence it suffices to show that for suitable choices of $\{\mathcal{V}_Q\}$, $\{\mathcal{U}_Q\}$ and $s>1$
\begin{equation}\label{eq:BoundRoughAp}
s' \sup_{Q}\left|\mathcal{U}_{Q}\mathcal{V}_{Q}\right|_{op}
 \|M_{\mathcal{V},W^{-\frac{1}{p}},1}\|_{L^{p}}\|M_{\mathcal{U},W^{\frac{1}{p}},s}\|_{L^{p'}}\lesssim [W]_{A_p}^\frac{1}{p}[W]_{A^{sc}_{\infty,p}}^{1+\frac{1}{p'}}[W^{-\frac{p'}{p}}]_{A^{sc}_{\infty,p'}}^\frac{1}{p}
\end{equation}
Let us choose $\mathcal{V}_{Q}$ such that for every $\vec{e}$
\begin{equation}\label{eq:CondVQRough}
|\mathcal{V}_{Q}\vec{e}|\simeq\left(\dashint_{Q}|W^{\frac{-1}{p}}(x)\vec{e}|^{rp'}\right)^{\frac{1}{rp'}}
\end{equation}
where $r=1+\dfrac{1}{2^{d+11}[W^{-\frac{p'}{p}}]_{A^{sc}_{p',\infty}}}$
and $\mathcal{U}_Q$ such that for every $\vec{e}$, 
\begin{equation}\label{eq:CondWQRough}
|\mathcal{U}_{Q}\vec{e}|\simeq\left(\dashint_{Q}|W^{\frac{1}{p}}(x)\vec{e}|^{s\gamma p}\right)^{\frac{1}{s\gamma p}}
\end{equation}
where
$$\gamma =1+\dfrac{1}{\left(\frac{p'+1}{2}\right)\tau_{d}[W]_{A^{sc}_{p,\infty}}} \qquad \text{and} \qquad  s=\left(\frac{p'+1}{2}\right)\dfrac{1+\tau_{n}[W]_{A^{sc}_{p,\infty}}}{1+\left(\frac{p'+1}{2}\right)\tau_{d}[W]_{A^{sc}_{p,\infty}}}.$$
Consequently $s\gamma=1+\dfrac{1}{2^{d+11}[W]_{A^{sc}_{p,\infty}}}$ 
and $s'\lesssim p [W]_{A^{sc}_{p,\infty}}$.
For these choices an application of Lemma \ref{lem:ApFromRH} and the definition of the $A_p$ condition yields
\begin{equation}\label{eq:ApRH}
\sup_Q|\mathcal{V}_Q\mathcal{U}_Q|_{op}\lesssim [W]_{A_p}^\frac{1}{p}.
\end{equation}
Now we focus on  $\|M_{\mathcal{V},W^{-\frac{1}{p}},1}\|_{L^{p}}$.
We are going to show that
\begin{equation}\label{eq:BoundMVRough}
\|M_{\mathcal{V},W^{-\frac{1}{p}},1}\|_{L^{p}}\lesssim[W^{-\frac{p'}{p}}]_{A^{sc}_{\infty,p'}}^\frac{1}{p}.
\end{equation}
First we observe that taking into account \eqref{eq:CondVQRough}
\begin{align*}\frac{1}{|Q|}\int_{Q}|\mathcal{V}_{Q}^{-1}W^{-\frac{1}{p}}(y)\vec{h}(y)|dy &\leq \left(\frac{1}{|Q|}\int_Q|\mathcal{V}_Q^{-1} W^{-\frac{1}{p}}(x)|^{p'r}dx)\right)^\frac{1}{p'r}\left(\frac{1}{|Q|}\int_Q|\vec{h}|^{(p'r)'}dx)\right)^\frac{1}{(p'r)'}\\
&\simeq |\mathcal{V}_Q^{-1} \mathcal{V}_Q|_{op}\left(\frac{1}{|Q|}\int_Q|\vec{h}|^{(p'r)'}dx)\right)^\frac{1}{(p'r)'}=\left(\frac{1}{|Q|}\int_Q|\vec{h}|^{(p'r)'}dx)\right)^\frac{1}{(p'r)'}
\end{align*} 
Consequently
\begin{equation}\label{eq:MVpr}
\|M_{\mathcal{V},W^{-\frac{1}{p}},1}\|_{L^{p}}\lesssim \|M_{(p'r)'}\|_{L^{p}}\lesssim \left[\left(\frac{p}{(p'r)'}\right)'\right]^\frac{1}{(p'r)'}
\end{equation}
and it suffices to provide a bound for the rightmost term. A suitable application of Lemma \ref{lem:param1} 
allows us to conclude that
\[\left[\left(\frac{p}{(p'r)'}\right)'\right]^{\frac{1}{(p'r)'}}\lesssim[W^{-\frac{p'}{p}}]_{A^{sc}_{\infty,p'}}^\frac{1}{p}.\]
This shows that \eqref{eq:BoundMVRough} holds.

At this point we turn our attention to  $\|M_{\mathcal{U},W^{\frac{1}{p}},s}\|_{L^{p'}}$.
We are going to show that
\begin{equation}\label{eq:BoundMWRough}
\|M_{\mathcal{U},W^{\frac{1}{p}},s}\|_{L^{p'}}\lesssim p'[W]^\frac{1}{p'}_{A^{sc}_{\infty,p}}.
\end{equation}
Taking into account \eqref{eq:CondWQRough}
\begin{align*}\left(\frac{1}{|Q|}\int_{Q}|\mathcal{U}_{Q}^{-1}W^{\frac{1}{p}}(y)\vec{g}(y)|^sdy\right)^{\frac{1}{s}} &\leq \left(\frac{1}{|Q|}\int_Q|\mathcal{U}_Q^{-1} W^{\frac{1}{p}}(x)|^{sp\gamma}dx)\right)^\frac{1}{sp\gamma}\left(\frac{1}{|Q|}\int_Q|\vec{g}|^{s(p\gamma)'}dx)\right)^\frac{1}{s(p\gamma)'}\\
&\simeq |\mathcal{U}_Q^{-1} \mathcal{U}_Q|_{op}\left(\frac{1}{|Q|}\int_Q|\vec{g}|^{s(p\gamma)'}dx)\right)^\frac{1}{s(p\gamma)'}=\left(\frac{1}{|Q|}\int_Q|\vec{g}|^{s(p\gamma)'}dx)\right)^\frac{1}{s(p\gamma)'}
\end{align*} 
Consequently
\begin{equation}\label{eq:MVpr}
\|M_{\mathcal{U},W^{\frac{1}{p}},s}\|_{L^{p'}}\lesssim \|M_{s(p\gamma)'}\|_{L^{p'}}\lesssim \left[\left(\frac{p'}{s(p\gamma)'}\right)'\right]^\frac{1}{s(p\gamma)'}.
\end{equation}
Now we observe that by Lemma \ref{lem:param2}
\[\left(\dfrac{p'}{s(\beta  p)'}\right)'\leq 2p\tau_n[W]_{A^{sc}_{\infty,p}}\]
and also that by Lemma \ref{lem:param1} choosing $\beta=\gamma$ and $\rho=p'$
\[\frac{1}{s(p'\gamma)'}=\frac{1}{s\gamma'}+\frac{1}{sp'\gamma}\leq \frac{c_d}{[W]_{A^{sc}_{\infty,p}}}+\frac{1}{p'}.\]
Hence, combining the estimates above
\[\left(\dfrac{p'}{s(\beta p)'}\right)'^{\frac{1}{s(\beta p)'}}\lesssim p[W]_{A^{sc}_{\infty,p}}^{\frac{1}{p'}}.\]
Combining this with \eqref{eq:MVpr} yields \eqref{eq:BoundMWRough}.
Finally taking into account our choice for $s$, \eqref{eq:ApRH}, \eqref{eq:BoundMVRough} and \eqref{eq:BoundMWRough} we conclude that \eqref{eq:BoundRoughAp} holds.

For the other estimate note that since $T^*_\Omega$ is also a rough singular integral,
\begin{align*}
 &\left|\int_{\mathbb{R}^d}\left\langle W^{\frac{1}{p}}T_{\Omega}\left(W^{-\frac{1}{p}}\vec{h}\right),\vec{g}\right\rangle dx\right|=\left|\int_{\mathbb{R}^d}\left\langle W^{-\frac{1}{p}}\vec{h},T^*_{\Omega}\left(W^{\frac{1}{p}}\vec{g}\right)\right\rangle dx\right|\\
 &\leq c_{n,d}\|\Omega\|_{L^\infty(\mathbb{S}^{d-1})}s' \underset{Q\in \mathcal{S}}{\sum} \langle\langle W^{-\frac{1}{p}}\vec{h}\rangle\rangle_{s,Q}\langle\langle W^{\frac{1}{p}}\vec{g}\rangle\rangle_{1,Q}|Q|
\end{align*}
Arguing as above essentially exchanging the roles of $\vec{h}$ and $\vec{g}$ we have that
\[s' \underset{Q\in \mathcal{S}}{\sum} \langle\langle W^{-\frac{1}{p}}\vec{h}\rangle\rangle_{s,Q}\langle\langle W^{\frac{1}{p}}\vec{g}\rangle\rangle_{1,Q}|Q|\lesssim [W]_{A_p}^\frac{1}{p}[W]_{A^{sc}_{\infty,p}}^{\frac{1}{p'}}[W^{-\frac{p'}{p}}]_{A^{sc}_{\infty,p'}}^{1+\frac{1}{p}}. \]
This ends the proof.

\subsection{Proof of Theorem \ref{Thm:HormAp}}

Taking into account Theorem \ref{Thm:Horm} and  Lemma \ref{lem:KeyAp} we have that there exists a sparse family $\mathcal{S}$ such that
\begin{align*}
 &\left|\int_{\mathbb{R}^d}\left\langle W^{\frac{1}{p}}T\left(W^{-\frac{1}{p}}\vec{h}\right),\vec{g}\right\rangle dx\right|=\left|\int_{\mathbb{R}^d}\left\langle T\left(W^{-\frac{1}{p}}\vec{h}\right),W^{\frac{1}{p}}\vec{g}\right\rangle dx\right|\\
 &\leq c_{n,d,T}\underset{Q\in \mathcal{S}}{\sum} \langle\langle W^{-\frac{1}{p}}\vec{h}\rangle\rangle_{r,Q}\langle\langle W^{\frac{1}{p}}\vec{g}\rangle\rangle_{1,Q}|Q|\\
 & \leq\frac{1}{\eta}c_{n,d,T} \sup_{Q}\left|\mathcal{U}_{Q}\mathcal{V}_{Q}\right|_{op}
 \|M_{\mathcal{V},W^{-\frac{1}{p}},r}\|_{L^{p}}\|M_{\mathcal{U},W^{\frac{1}{p}},1}\|_{L^{p'}}\|\vec{h}\|_{L^{p}}\|\vec{g}\|_{L^{p'}}
\end{align*}
where $\{\mathcal U_Q\}$ and $\{\mathcal V_Q\}$ are families of self-adjoint, positive definite matrices. Hence it suffices to show that for suitable choices of $\{\mathcal{V}_Q\}$, $\{\mathcal{U}_Q\}$
\begin{equation}\label{eq:BoundHrApr}
\sup_{Q}\left|\mathcal{U}_{Q}\mathcal{V}_{Q}\right|_{op}
 \|M_{\mathcal{V},W^{-\frac{1}{p}},r}\|_{L^{p}}\|M_{\mathcal{U},W^{\frac{1}{p}},1}\|_{L^{p'}}\lesssim [W]_{A_{\frac{p}{r}}}^{\frac{1}{p}} [W^{-\frac{r}{p}(\frac{p}{r})'}]_{A^{sc}_{(\frac{p}{r})',\infty}}^{\frac{1}{p}} [W]_{A^{sc}_{\infty,\frac{p}{r}}}^{\frac{1}{p'}}
\end{equation}  
We choose $\mathcal{U}_Q=\mathcal{A}^{\frac{1}{r}}_Q$ where \[|\mathcal{A}_{Q}\vec{e}|\simeq \left(\dashint_{Q} |W^{\frac{-r}{p}}\vec{e}|^{(\frac{p}{r})'\alpha 
  }dx\right)^{\frac{1}{(\frac{p}{r})'\alpha 
  }}\]and $\alpha=1+\dfrac{1}{\tau_{d}[W^{-\frac{r}{p}(\frac{p}{r})'}]_{A^{sc}_{(\frac{p}{r})',\infty}}}$ and $\mathcal{U}_Q=\mathcal{B}^{\frac{1}{r}}_Q$ such that \[|\mathcal{B}_{Q}\vec{e}|\simeq \left(\dashint_{Q} |W^{\frac{r}{p}}\vec{e}|^{\beta\frac{p}{r} 
  }dx\right)^{\frac{r}{\beta p}}\]where $\beta =1+\dfrac{1}{\tau_{d}[W]_{A^{sc}_{\infty,\frac{p}{r}}}}$. 
  
 First we observe that by Lemma \ref{lem:Bownik}  
 \[\left|\mathcal{U}_{Q}\mathcal{V}_{Q}\right|_{op}=
 \left|\mathcal{A}^\frac{1}{r}_{Q}\mathcal{B}_{Q}^{\frac{1}{r}}\right|_{op}
 \lesssim\left|\mathcal{A}_Q\mathcal{B}_Q\right|_{op}^{\frac{1}{r}}\]
Now by Lemma \ref{lem:ApFromRH} 
we have that
 \[\left|\mathcal{A}_Q\mathcal{B}_Q\right|_{op}^{\frac{1}{r}}\lesssim\left|\mathcal{W}_{Q,p/r}\mathcal{W}'_{Q,p/r}\right|^{\frac{1}{r}}_{op}.\]
Consequently
 \begin{equation}\label{AprRH}
 \sup_Q\left|\mathcal{U}_{Q}\mathcal{V}_{Q}\right|_{op}\leq [W]_{A_{\frac{p}{r}}}^{\frac{1}{p}}
 \end{equation}
 Now we show that 
 \begin{equation}\label{eq:MVW-Hr} 
 \|M_{\mathcal{V},W^{-\frac{1}{p}},r}\|_{L^{p}}\lesssim[W^{-\frac{r}{p}(\frac{p}{r})'}]_{A^{sc}_{(\frac{p}{r})',\infty}}^{\frac{1}{p}}
 \end{equation}
 First we observe that taking into account Lemma \ref{lem:Bownik}   and Reverse H\"older inequality  
 \begin{align*}&\left(\frac{1}{|Q|}\int_{Q}|\mathcal{A}_{Q}^{-\frac{1}{r}}W^{-\frac{1}{p}}(y)\vec{h}(y)|^{r}dy\right)^{\frac{1}{r}}\\
  &\leq \left(\frac{1}{|Q|}\int_Q|\mathcal{A}_Q^{-\frac{1}{r}} W^{-\frac{1}{p}}(x)|^{r(\frac{p}{r})'\alpha}dx)\right)^\frac{1}{r(\frac{p}{r})'\alpha}\left(\frac{1}{|Q|}\int_Q|\vec{h}|^{r((\frac{p}{r})'\alpha)'}dx)\right)^\frac{1}{r((\frac{p}{r})'\alpha)'}\\
  & =\left(\frac{1}{|Q|}\int_Q|\mathcal{A}_Q^{-\frac{1}{r}} W^{-\frac{r}{p}\frac{1}{r}}(x)|^{r(\frac{p}{r})'\alpha}dx)\right)^\frac{1}{r(\frac{p}{r})'\alpha}\left(\frac{1}{|Q|}\int_Q|\vec{h}|^{r((\frac{p}{r})'\alpha)'}dx)\right)^\frac{1}{r((\frac{p}{r})'\alpha)'}\\
 & \lesssim\left(\frac{1}{|Q|}\int_Q|\mathcal{A}_Q^{-1} W^{-\frac{r}{p}}(x)|^{(\frac{p}{r})'\alpha}dx)\right)^\frac{1}{r(\frac{p}{r})'\alpha}\left(\frac{1}{|Q|}\int_Q|\vec{h}|^{r((\frac{p}{r})'\alpha)'}dx)\right)^\frac{1}{r((\frac{p}{r})'\alpha)'}\\
&\simeq |\mathcal{A}_Q^{-1} \mathcal{A}_Q|_{op}\left(\frac{1}{|Q|}\int_Q|\vec{h}|^{r((\frac{p}{r})'\alpha)'}dx)\right)^\frac{1}{r((\frac{p}{r})'\alpha)'}=\left(\frac{1}{|Q|}\int_Q|\vec{h}|^{r((\frac{p}{r})'\alpha)'}dx)\right)^\frac{1}{r((\frac{p}{r})'\alpha)'}
\end{align*} 
from which readily follows that
\begin{equation*}
\|M_{\mathcal{V},W^{-\frac{r}{p}},r}\vec{h}\|_{L^{p}}\lesssim \|M_{r((\frac{p}{r})'\alpha)'}(\vec{h})\|_{L^{p}}\lesssim \left[\left(\frac{p}{r((\frac{p}{r})'\alpha)'}\right)'\right]^\frac{1}{r((\frac{p}{r})'\alpha)'}\|\vec{h}\|_{L^{p}}.
\end{equation*}
Now we observe that by Lemma \ref{lem:param1} we can conclude that
\[\left[\left(\dfrac{p}{r(\alpha (\frac{p}{r})')'}\right)'\right]^{\frac{1}{r(\alpha (\frac{p}{r})')'}}\lesssim[W^{\frac{r}{p}(\frac{p}{r})'}]_{A^{sc}_{(\frac{p}{r})',\infty}}^{\frac{1}{p}} \]
and hence \eqref{eq:MVW-Hr} holds.

It remains to show that 
\begin{equation}\label{eq:MVW-Hr2} 
\|M_{\mathcal{U},W^{\frac{1}{p}},1}\|_{L^{p'}}\lesssim [W]_{A^{sc}_{\infty,\frac{p}{r}}}^{\frac{1}{p'}}
\end{equation}
First note that
\begin{align*}\frac{1}{|Q|}\int_{Q}|\mathcal{B}_{Q}^{-\frac{1}{r}}W^{\frac{1}{p}}(y)\vec{g}(y)|dy &\leq\left(\frac{1}{|Q|}\int_Q|\mathcal{B}_Q^{-\frac{1}{r}} W^{\frac{1}{p}}(x)|^{p\beta}dx)\right)^\frac{1}{p\beta}\left(\frac{1}{|Q|}\int_Q|\vec{g}|^{(p\beta)'}dx)\right)^\frac{1}{(p\beta)'}\\ &=\left(\frac{1}{|Q|}\int_Q|\mathcal{B}_Q^{-\frac{1}{r}} W^{\frac{r}{p}\frac{1}{r}}(x)|^{r\frac{p}{r}\beta}dx)\right)^{\frac{r}{p\beta}\frac{1}{r}}\left(\frac{1}{|Q|}\int_Q|\vec{g}|^{(p\beta)'}dx)\right)^\frac{1}{(p\beta)'}\\
&\simeq |\mathcal{B}_Q^{-1} \mathcal{B}_{Q}|_{op}^{\frac{1}{r}}\left(\frac{1}{|Q|}\int_Q|\vec{g}|^{(p\beta)'}dx)\right)^\frac{1}{(p\beta)'}=\left(\frac{1}{|Q|}\int_Q|\vec{g}|^{(p\beta)'}dx)\right)^\frac{1}{(p\beta)'}
\end{align*} 
Consequently
\begin{equation}\label{eq:MVpr2}
\|M_{\mathcal{U},W^{\frac{1}{p}},1}\|_{L^{p'}}\lesssim \|M_{(p\beta)'}\|_{L^{p'}}\lesssim \left[\left( \frac{p'}{(p\beta)'}\right)'\right]^\frac{1}{(p\beta)'}.
\end{equation}
By Lemma \ref{lem:param1} we have that
$$\left[\left( \frac{p'}{(p\beta)'}\right)'\right]^\frac{1}{(p\beta)'}\lesssim [W]_{A^{sc}_{\infty,\frac{p}{r}}}^{\frac{1}{p'}} $$
and \eqref{eq:MVW-Hr2} holds.

Gathering \eqref{AprRH}, \eqref{eq:MVW-Hr2} and \eqref{eq:MVW-Hr} we obtain \eqref{eq:BoundHrApr} and hence we are done.

\subsection{Proof of Theorem \ref{Thm:A1}}
In virtue of Theorem \ref{Thm:Rough} applied to $T_\Omega^*$ and Lemma \ref{lem:KeyAp} we may start arguing as follows.
\begin{align*}
 &\left|\int_{\mathbb{R}^d}\left\langle W^{\frac{1}{p}}T_{\Omega}\left(W^{-\frac{1}{p}}\vec{h}\right),\vec{g}\right\rangle dx\right|=\left|\int_{\mathbb{R}^d}\left\langle T_{\Omega}\left(W^{-\frac{1}{p}}\vec{h}\right),W^{\frac{1}{p}}\vec{g}\right\rangle dx\right|\\
 &=\left|\int_{\mathbb{R}^d}\left\langle W^{-\frac{1}{p}}\vec{h},T^{*}_{\Omega}\left(W^{\frac{1}{p}}\vec{g}\right)\right\rangle dx\right|\\
 &\leq c_{n,d}\|\Omega\|_{L^\infty(\mathbb{S}^{d-1})}s' \underset{Q\in \mathcal{S}}{\sum} \langle\langle W^{-\frac{1}{p}}\vec{h}\rangle\rangle_{s,Q}\langle\langle W^{\frac{1}{p}}\vec{g}\rangle\rangle_{1,Q}|Q|\\
 & \leq\frac{1}{\eta}c_{n,d}\|\Omega\|_{L^\infty(\mathbb{S}^{d-1})}s' \sup_{Q}\left|\mathcal{U}_{Q}\mathcal{V}_{Q}\right|_{op}
 \|M_{\mathcal{V},W^{-\frac{1}{p}},s}\|_{L^{p}}\|M_{\mathcal{U},W^{\frac{1}{p}},1}\|_{L^{p'}}\|\vec{h}\|_{L^{p}}\|\vec{g}\|_{L^{p'}}
\end{align*}
Hence it suffices to bound the latter.

We take $\mathcal{V}_Q=\mathcal{A}^{-\frac{1}{p}}_Q$ where\[|\mathcal{A}_Q\vec{e}|\approx \dashint_{Q} |W\vec{e}|dx\] and $\mathcal{U}_Q=\mathcal{V}_Q^{-1}$.

For those choices we have that $\sup_{Q}\left|\mathcal{U}_{Q}\mathcal{V}_{Q}\right|_{op}=1$ 
First we show that 
\begin{equation} \label{eq:MRoughhA1}
\|M_{\mathcal{V},W^{-\frac{1}{p}},s}\|_{L^{p}}\lesssim [W]_{A_{1}}^{\frac{1}{p}}.
\end{equation}
 For that purpose we observe that taking into account the definition of $A_1$ weight and Lemma \ref{lem:Bownik}  
\begin{align*}
&\left(\dashint_{Q}|\mathcal{V}^{-1}_{Q}W^{-\frac{1}{p}}(x)\vec{h}(x)|^{s}dx\right)^{\frac{1}{s}}\\ &=\left(\dashint_{Q}|\mathcal{A}_{Q}^\frac{1}{p}W^{-\frac{1}{p}}(x)\vec{h}(x)|^{s}dx\right)^{\frac{1}{s}}
\leq\left(\dashint_{Q}|\mathcal{A}_{Q}^{\frac{1}{p}}W^{-\frac{1}{p}}(x)|_{op}^{s}|\vec{h}(x)|^{s}dx\right)^{\frac{1}{s}}\\ 
&\lesssim \left(\dashint_{Q}|\mathcal{V}_{Q}{W}^{-1}(x)|^{\frac{s}{p}}|\vec{h}(x)|^{s}dx\right)^{\frac{1}{s}}\simeq\left(\dashint_{Q}\left(\dashint_{Q}|{W}(y)W^{-1}(x)|dy\right)^{\frac{s}{p}}|\vec{h}(x)|^{s}dx\right)^{\frac{1}{s}}\\
 &\leq [W]_{A_{1}}^{\frac{1}{p}}\left(\dashint_{Q}|\vec{h}(x)|^{s}dx\right)^{\frac{1}{s}}\leq [W]_{A_{1}}^{\frac{1}{p}} M_ {s}\vec{h}(x)
 \end{align*}
 
Consequently
\begin{equation*} 
\|M_{\mathcal{V},W^{-\frac{1}{p}},s}\vec{h}\|_{L^{p}}\lesssim [W]_{A_{1}}^{\frac{1}{p}}\|M_{s}\vec{h}\|_{L^{p}}\lesssim[W]_{A_{1}}^{\frac{1}{p}}\left(\frac{p}{s}\right)'^{\frac{1}{s}}\|\vec{h}\|_{L_{p}}
\end{equation*}
and choosing for instance $s=\frac{p+1}{2}<p$ \eqref{eq:MRoughhA1} holds.

To end the proof it suffices to show that 
\begin{equation} \label{eq:MRoughhA1-2}\|M_{\mathcal{U},W^{\frac{1}{p}},1}\|_{L^{p'}}\lesssim[W]_{A^{sc}_{1,\infty}}^{\frac{1}{p'}}.
\end{equation}
Let us call $\beta=1+\dfrac{1}{2^{d+11}[W]_{A^{sc}_{1,\infty}}}$. Then we have that, taking into account Reverse H\"older inequality, and Lemma \ref{lem:Bownik}
\begin{align*}
&\frac{1}{|Q|}\int_Q|\mathcal{U}_{Q}^{-1}W^{\frac{1}{p}}(y)\vec{g}(y)|dy =\frac{1}{|Q|}\int_Q|\mathcal{A}_{Q}^{-\frac{1}{p}}W^{\frac{1}{p}}(y)\vec{g}(y)|\\
&\leq \left(\frac{1}{|Q|}\int_Q|\mathcal{A}_{Q}^{-\frac{1}{p}} W^{\frac{1}{p}}(x)|^{p\beta}dx)\right)^\frac{1}{p\beta}\left(\frac{1}{|Q|}\int_Q|\vec{g}|^{(p\beta)'}dx)\right)^\frac{1}{(p\beta)'}\\
&\lesssim \left(\frac{1}{|Q|}\int_Q|\mathcal{A}_{Q}^{-\frac{1}{p}} W^{\frac{1}{p}}(x)|_{op}^{p}dx)\right)^\frac{1}{p}\left(\frac{1}{|Q|}\int_Q|\vec{g}|^{(p\beta)'}dx)\right)^\frac{1}{(p\beta)'}\\
&\lesssim \left(\frac{1}{|Q|}\int_Q|\mathcal{A}_{Q}^{-1} W(x)|_{op}dx)\right)^\frac{1}{p}\left(\frac{1}{|Q|}\int_Q|\vec{g}|^{(p\beta)'}dx)\right)^\frac{1}{(p\beta)'}\\
&\simeq |\mathcal{A}_Q^{-1} \mathcal{A}_Q|_{op}\left(\frac{1}{|Q|}\int_Q|\vec{g}|^{(p\beta)'}dx)\right)^\frac{1}{(p\beta)'}=\left(\frac{1}{|Q|}\int_Q|\vec{g}|^{(p\beta)'}dx\right)^\frac{1}{(p\beta)'}.
\end{align*} 
Consequently
\begin{equation}
\|M_{\mathcal{U},W^{\frac{1}{p}},1}\|_{L^{p'}}\lesssim \|M_{(p\beta)'}\|_{L^{p'}}\lesssim \left[\left(\frac{p'}{(p\beta)'}\right)'\right]^\frac{1}{(p\beta)'}.
\end{equation}
Now by Lemma \ref{lem:param1} we have that
\[\left[\left(\frac{p'}{(p\beta)'}\right)'\right]^\frac{1}{(p\beta)'}\lesssim [W]_{A^{sc}_{1,\infty}}^{\frac{1}{p'}}\]
and hence \eqref{eq:MRoughhA1-2} follows.

The proof of \eqref{eq:HormA1} is exactly the same we have just presented replacing the choice we made for $s$ by $r$, that satisfies $r<p$.

\subsection{Proof of Theorem \ref{Thm:Aq}}
Again by Theorem \ref{Thm:Rough} applied to $T_\Omega^*$ and  Lemma \ref{lem:KeyAp} we may start arguing as follows.
\begin{align*}
 &\left|\int_{\mathbb{R}^d}\left\langle W^{\frac{1}{p}}T_{\Omega}\left(W^{-\frac{1}{p}}\vec{h}\right),\vec{g}\right\rangle dx\right|=\left|\int_{\mathbb{R}^d}\left\langle T_{\Omega}\left(W^{-\frac{1}{p}}\vec{h}\right),W^{\frac{1}{p}}\vec{g}\right\rangle dx\right|\\
 &=\left|\int_{\mathbb{R}^d}\left\langle W^{-\frac{1}{p}}\vec{h},T^{*}_{\Omega}\left(W^{\frac{1}{p}}\vec{g}\right)\right\rangle dx\right|\\
 &\leq c_{n,d}\|\Omega\|_{L^\infty(\mathbb{S}^{d-1})}s' \underset{Q\in \mathcal{S}}{\sum} \langle\langle W^{-\frac{1}{p}}\vec{h}\rangle\rangle_{s,Q}\langle\langle W^{\frac{1}{p}}\vec{g}\rangle\rangle_{1,Q}|Q|\\
 & \leq\frac{1}{\eta}c_{n,d}\|\Omega\|_{L^\infty(\mathbb{S}^{d-1})}s' \sup_{Q}\left|\mathcal{U}_{Q}\mathcal{V}^{\frac{1}{p}}_{Q}\right|_{op}
 \|M_{\mathcal{V},W^{-\frac{1}{p}},s}\|_{L^{p}}\|M_{\mathcal{U},W^{\frac{1}{p}},1}\|_{L^{p'}}\|\vec{h}\|_{L^{p}}\|\vec{g}\|_{L^{p'}}
\end{align*}
and it will suffice to bound the latter.
We choose $\mathcal{V}_Q=\mathcal{A}_Q^\frac{q}{p}$ where
\[|\mathcal{A}_{Q}\vec{e}|\approx \left(\frac{1}{|Q|}\int |W^{-\frac{1}{q}}(z)\vec{e}|^{q'}dz\right)^{\frac{1}{q'}}\]
and $\mathcal{U}_Q=\mathcal{V}_Q^{-1}$. For those choices
\[\sup_{Q}\left|\mathcal{U}_{Q}\mathcal{V}^{\frac{1}{p}}_{Q}\right|_{op}=1\]
so it remains to provide estimates for 
\[ \|M_{\mathcal{V},W^{-\frac{1}{p}},s}\|_{L^{p}}\qquad \text{and}\qquad \|M_{\mathcal{U},W^{\frac{1}{p}},1}\|_{L^{p'}}.\]
First we show that
\begin{equation}\label{eq:MvW-1RoughAq}
\|M_{\mathcal{V},W^{-\frac{1}{p}},1}\vec{h}\|_{L^{p}}\lesssim c_{p,q}
\end{equation}
Let $s>1$ such that $0<\frac{q}{p}s<1$ for instance we may choose $s=\frac{\frac{p}{q}+1}{2}$. Then we have that, taking that choice for $s$  and Lemma \ref{lem:Bownik} into account

\begin{align*}
&\left(\frac{1}{|Q|}\int_Q|\mathcal{V}_{Q}^{-1}W^{-\frac{1}{p}}\vec{h}(x)|^{s}dx\right)^{\frac{1}{s}}=\left(\frac{1}{|Q|}\int_Q|\mathcal{A}_{Q}^{-\frac{q}{p}}W^{-\frac{1}{p}}|^{s}|\vec{h}(x)|^{s}dx\right)^{\frac{1}{s}}\\
&\leq \left(\frac{1}{|Q|}\int_Q|\mathcal{A}_{Q}^{-\frac{q}{p}}W^{-\frac{1}{p}}|^{sq'}dx\right)^{\frac{1}{sq'}}\left(\frac{1}{|Q|}\int_Q|\vec{h}(x)|^{sq}dx\right)^{\frac{1}{sq}}\\ &\leq \left(\frac{1}{|Q|}\int_Q|\mathcal{A}_{Q}^{-1}W^{-\frac{1}{q}}|^{\frac{q}{p}sq'}dx\right)^{\frac{1}{sq'}}\left(\frac{1}{|Q|}\int_Q|\vec{h}(x)|^{sq}dx\right)^{\frac{1}{sq}}\\
&\leq\left(\frac{1}{|Q|}\int_Q|\mathcal{V}_{Q}^{-1}W^{-\frac{1}{q}}|^{q'}dx\right)^{\frac{q}{pq'}}\left(\frac{1}{|Q|}\int_Q|\vec{h}(x)|^{sq}dx\right)^{\frac{1}{sq}}\\ &\leq|\mathcal{A}_{Q}^{-1}\mathcal{A}_{Q}|^{\frac{p}{q}} \left(\frac{1}{|Q|}\int_Q|\vec{h}(x)|^{sq}dx\right)^{\frac{1}{sq}}\leqslant M_ {sq}\vec{h}(x).
 \end{align*}
Consequently
\begin{equation} 
\|M_{\mathcal{V},W^{-\frac{1}{p}},1}\vec{h}\|_{L^{p}}\lesssim \|M_{sq}\vec{h}\|_{L^{p}}\lesssim\left(\frac{p}{sq}\right)'^{\frac{1}{sq}}\|\vec{h}\|_{L^{p}}
\end{equation}
and \eqref{eq:MvW-1RoughAq} holds.

Now we turn our attention to the remaining term. We show that
\begin{equation}\label{eq:MvW-1RoughAq2}
\|M_{\mathcal{U},W^{\frac{1}{p}},1}\|_{L^{p'}}\lesssim [W]_{A_{q}}^{\frac{1}{p}}[W]_{A^{sc}_{\infty,q}}^{\frac{1}{p'}}
\end{equation}
Let $\beta=1+\dfrac{1}{2^{d+11}[W]_{A^{sc}_{\infty,q}}}$. Then we have that by reverse H\"older inequality and Lemma \ref{lem:Bownik},
\begin{align*}
\frac{1}{|Q|}\int_Q|\mathcal{U}_{Q}^{-1}W^{\frac{1}{p}}(y)\vec{g}(y)|dy &\leq \frac{1}{|Q|}\int_Q|\mathcal{A}_{Q}^{\frac{q}{p}}W^{{\frac{1}{q}\frac{q}{p}}}(y)\|\vec{g}(y)|dy\\
&\leq \left(\frac{1}{|Q|}\int_Q|\mathcal{A}_Q W^{\frac{1}{q}}(x)|^{\frac{q}{p}p\beta}dx)\right)^\frac{1}{p\beta}\left(\frac{1}{|Q|}\int_Q|\vec{g}|^{(p\beta)'}dx)\right)^\frac{1}{(p\beta)'}\\
&\leq \left(\frac{1}{|Q|}\int_Q|\mathcal{A}_Q W^{\frac{1}{q}}(x)|^{q\beta} dx)\right)^{\frac{1}{q\beta}\frac{q}{p}}\left(\frac{1}{|Q|}\int_Q|\vec{g}|^{(p\beta)'}dx\right)^\frac{1}{(p\beta)'}\\
&\lesssim\left(\frac{1}{|Q|}\int_Q|\mathcal{A}_QW^{\frac{1}{q}}(x)|^{q}dx\right)^{\frac{1}{p}}\left(\frac{1}{|Q|}\int_Q|\vec{g}|^{(p\beta)'}dx\right)^\frac{1}{(p\beta)'}\\
&\lesssim [W]_{A_{q}}^{\frac{1}{p}}\left(\frac{1}{|Q|}\int_Q|\vec{g}|^{(p\beta)'}dx\right)^{\frac{1}{(p\beta)'}}.
\end{align*} 
Hence,
\begin{equation}
\|M_{\mathcal{U},W^{\frac{1}{p}},1}\|_{L^{p'}}\lesssim [W]_{A_{q}}^{\frac{1}{p}} \|M_{(p\beta)'}\|_{L^{p'}}\lesssim [W]_{A_{q}}^{\frac{1}{p}}\left[\left(\frac{p'}{(p\beta)'}\right)'\right]^\frac{1}{(p\beta)'}.
\end{equation}
By Lemma \ref{lem:param1} it is not hard to conclude, as we did earlier in this section, that
\[\left[\left(\frac{p'}{(p\beta)'}\right)'\right]^\frac{1}{(p\beta)'}\lesssim [W]_{A^{sc}_{\infty,q}}^{\frac{1}{p'}}.\]
Gathering the estimates above, \eqref{eq:MvW-1RoughAq2} holds and we are done.

To settle \eqref{eq:HormAq} the proof is essentially the same we have just provided for \eqref{eq:RoughAq} just replacing the choice we made for $s$ by $r$ and taking into account that by hypothesis $0<\frac{q}{p}r<1$.

\section{Proofs of Coifman-Fefferman estimates}
\subsection{Estimates assuming $A_\infty$ conditions}

Let us settle each case. Let us deal with Calderón-Zygmund operators
first. By the sparse domination result in \cite{NPTV} we have that arguing
by duality,
\begin{align*}
 & \left|\int_{\mathbb{R}^{n}}\langle W^{\frac{1}{p}}(x)T(W^{-\frac{1}{p}}\vec{f})(x),\vec{g}(x)\rangle dx\right|\\
 & \lesssim\sum_{Q\in\mathcal{S}}\left(\frac{1}{|Q|}\int_{Q}|\mathcal{W}{}_{p,Q}W^{-\frac{1}{p}}\vec{f}|\right)\left(\frac{1}{|Q|}\int_{Q}|\mathcal{W}_{p,Q}^{-1}W^{\frac{1}{p}}\vec{g}|\right)|Q|\\
 & \leq\left(\sum_{Q\in\mathcal{S}}\left(\frac{1}{|Q|}\int_{Q}|\mathcal{W}{}_{p,Q}W^{-\frac{1}{p}}\vec{f}|\right)^{p}|Q|\right)^{\frac{1}{p}}\left(\sum_{Q\in\mathcal{S}}\left(\frac{1}{|Q|}\int_{Q}|\mathcal{W}_{p,Q}^{-1}W^{\frac{1}{p}}\vec{g}|\right)^{p'}|Q|\right)^{\frac{1}{p'}}\\
 & \lesssim\left\Vert \sup_{Q}\frac{1}{|Q|}\int_{Q}|\mathcal{W}{}_{p,Q}W^{-\frac{1}{p}}\vec{f}|\right\Vert _{L^{p}}\left(\sum_{Q\in\mathcal{S}}\left(\frac{1}{|Q|}\int_{Q}|\mathcal{W}_{p,Q}^{-1}W^{\frac{1}{p}}\vec{g}|\right)^{p'}|Q|\right)^{\frac{1}{p'}}.
\end{align*}
Now we observe that if $W\in A_{\infty,p}^{sc}$ we have that choosing
$r=1+\frac{1}{2^{d+11}[W]_{A_{\infty,p}^{sc}}}$
\begin{align*}
\frac{1}{|Q|}\int_{Q}|\mathcal{W}_{p,Q}^{-1}W^{\frac{1}{p}}\vec{g}| & \leq\left(\frac{1}{|Q|}\int_{Q}|\mathcal{W}_{p,Q}^{-1}W^{\frac{1}{p}}|_{op}^{rp}\right)^{\frac{1}{rp}}\left(\frac{1}{|Q|}\int_{Q}|\vec{g}|^{(rp)'}\right)^{\frac{1}{(rp)'}}\\
 & \lesssim\left(\frac{1}{|Q|}\int_{Q}|\mathcal{W}_{p,Q}^{-1}W^{\frac{1}{p}}|_{op}^{p}\right)^{\frac{1}{p}}\left(\frac{1}{|Q|}\int_{Q}|\vec{g}|^{(rp)'}\right)^{\frac{1}{(rp)'}}\\
 & \lesssim\left(\frac{1}{|Q|}\int_{Q}|\vec{g}|^{(rp)'}\right)^{\frac{1}{(rp)'}}
\end{align*}
and hence
\[
\left(\sum_{Q\in\mathcal{S}}\langle\mathcal{W}{}_{Q}^{-1}W^{\frac{1}{p}}g\rangle_{Q}^{p'}|Q|\right)^{\frac{1}{p'}}\lesssim\left\Vert M_{(rp)'}(|\vec{g}|)\right\Vert _{L^{p'}}\lesssim\left(\frac{p'}{(rp)'}\right)^{\frac{1}{(rp)'}}\|\vec{g}\|_{L^{p'}}.
\]
Since by Lemma \ref{lem:param1}
\[
\left(\frac{p'}{(rp)'}\right)^{\frac{1}{(rp)'}}\lesssim[W]_{A_{\infty,p}^{sc}}^{\frac{1}{p}}
\]
we are done.

For $T_{\Omega}$, by Theorem \ref{Thm:Rough} we have that
\begin{align*}
 & \left|\int_{\mathbb{R}^{n}}\langle W^{\frac{1}{p}}(x)T_{\Omega}(W^{-\frac{1}{p}}\vec{f})(x),\vec{g}(x)\rangle dx\right|\\
 & \lesssim\|\Omega\|_{L^{\infty}(\mathbb{S}^{d-1})}s'\sum_{Q\in\mathcal{S}}\left(\frac{1}{|Q|}\int_{Q}|\mathcal{W}{}_{p,Q}W^{-\frac{1}{p}}\vec{f}|\right)\left(\frac{1}{|Q|}\int_{Q}|\mathcal{W}_{p,Q}^{-1}W^{\frac{1}{p}}\vec{g}|^{s}\right)^{\frac{1}{s}}|Q|\\
 & \leq\left(\sum_{Q\in\mathcal{S}}\left(\frac{1}{|Q|}\int_{Q}|\mathcal{W}{}_{p,Q}W^{-\frac{1}{p}}\vec{f}|\right)^{p}|Q|\right)^{\frac{1}{p}}\left(\sum_{Q\in\mathcal{S}}\left(\frac{1}{|Q|}\int_{Q}|\mathcal{W}_{p,Q}^{-1}W^{\frac{1}{p}}\vec{g}|^{s}\right)^{\frac{p'}{s}}|Q|\right)^{\frac{1}{p'}}\\
 & \lesssim\|\Omega\|_{L^{\infty}(\mathbb{S}^{d-1})}s'\left\Vert \sup_{Q}\frac{1}{|Q|}\int_{Q}|\mathcal{W}{}_{p,Q}W^{-\frac{1}{p}}\vec{f}|\right\Vert _{L^{p}}\left(\sum_{Q\in\mathcal{S}}\left(\frac{1}{|Q|}\int_{Q}|\mathcal{W}_{p,Q}^{-1}W^{\frac{1}{p}}\vec{g}|^{s}\right)^{\frac{p'}{s}}|Q|\right)^{\frac{1}{p'}}
\end{align*}
Choosing $s=\left(\frac{p'+1}{2}\right)\frac{1+\tau_{d}[W]_{A_{p,\infty}^{sc}}}{1+\left(\frac{p'+1}{2}\right)\tau_{d}[W]_{A_{p,\infty}^{sc}}}$
and $r=1+\frac{1}{\left(\frac{p'+1}{2}\right)\tau_{d}[W]_{A_{p,\infty}^{sc}}}$
\begin{align*}
\left(\frac{1}{|Q|}\int_{Q}|\mathcal{W}_{p,Q}^{-1}W^{\frac{1}{p}}\vec{g}|^{s}\right)^{\frac{1}{s}} & \leq\left(\frac{1}{|Q|}\int_{Q}|\mathcal{W}_{p,Q}^{-1}W^{\frac{1}{p}}|_{op}^{sp}\right)^{\frac{1}{srp}}\left(\frac{1}{|Q|}\int_{Q}|\vec{g}|^{s(rp)'}\right)^{\frac{1}{s(rp)'}}\\
 & \lesssim\left(\frac{1}{|Q|}\int_{Q}|\mathcal{W}_{p,Q}^{-1}W^{\frac{1}{p}}|_{op}^{p}\right)^{\frac{1}{p}}\left(\frac{1}{|Q|}\int_{Q}|\vec{g}|^{s(rp)'}\right)^{\frac{1}{s(rp)'}}\\
 & \lesssim\left(\frac{1}{|Q|}\int_{Q}|\vec{g}|^{s(rp)'}\right)^{\frac{1}{s(rp)'}}
\end{align*}
Hence arguing as we did to settle \eqref{eq:MVpr}
\[
\left(\sum_{Q\in\mathcal{S}}\left(\frac{1}{|Q|}\int_{Q}|\mathcal{W}_{p,Q}^{-1}W^{\frac{1}{p}}\vec{g}|\right)^{p'}|Q|\right)^{\frac{1}{p'}}\lesssim\|M_{s(rp)'}(|\vec{g}|)\|_{L^{p'}}\lesssim[W]_{A_{p,\infty}^{sc}}^{\frac{1}{p}}\|\vec{g}\|_{L^{p'}}
\]
and we are done.

Finally if $T$ is a $L^{r'}$-Hörmander operator
\begin{align*}
 & \left|\int_{\mathbb{R}^{n}}\langle W^{\frac{1}{p}}(x)T(W^{-\frac{1}{p}}\vec{f})(x),\vec{g}(x)\rangle dx\right|\\
 & \lesssim c_{n,d,T}\sum_{Q\in\mathcal{S}}\left(\frac{1}{|Q|}\int_{Q}|\mathcal{W}{}_{\frac{p}{r},Q}^{\frac{1}{r}}W^{-\frac{1}{p}}\vec{f}|^{r}\right)^{\frac{1}{r}}\left(\frac{1}{|Q|}\int_{Q}|\mathcal{W}_{\frac{p}{r},Q}^{-\frac{1}{r}}W^{\frac{1}{p}}\vec{g}|\right)|Q|\\
 & \leq c_{n,d,T}\left(\sum_{Q\in\mathcal{S}}\left(\frac{1}{|Q|}\int_{Q}|\mathcal{W}{}_{p,Q}W^{-\frac{1}{p}}\vec{f}|^{r}\right)^{\frac{p}{r}}|Q|\right)^{\frac{1}{p}}\left(\sum_{Q\in\mathcal{S}}\left(\frac{1}{|Q|}\int_{Q}|\mathcal{W}_{p,Q}^{-1}W^{\frac{1}{p}}\vec{g}|\right)^{p'}|Q|\right)^{\frac{1}{p'}}\\
 & \lesssim c_{n,d,T}\left\Vert \sup_{Q}\left(\frac{1}{|Q|}\int_{Q}|\mathcal{W}{}_{\frac{p}{r},Q}^{\frac{1}{r}}W^{-\frac{1}{p}}\vec{f}|^{r}\right)^{\frac{1}{r}}\right\Vert _{L^{p}}\left(\sum_{Q\in\mathcal{S}}\left(\frac{1}{|Q|}\int_{Q}|\mathcal{W}_{\frac{p}{r},Q}^{-\frac{1}{r}}W^{\frac{1}{p}}\vec{g}|\right)^{p'}|Q|\right)^{\frac{1}{p'}}.
\end{align*}
Taking into account Lemma \ref{lem:Bownik} and Reverse Hölder inequality
\begin{align*}
\frac{1}{|Q|}\int_{Q}|\mathcal{W}_{\frac{p}{r},Q}^{-\frac{1}{r}}W^{\frac{1}{p}}\vec{g}| & \leq\left(\frac{1}{|Q|}\int_{Q}|\mathcal{W}_{\frac{p}{r},Q}^{-\frac{1}{r}}W^{\frac{1}{p}}\vec{g}|^{\alpha p}\right)^{\frac{1}{\alpha p}}\left(\frac{1}{|Q|}\int_{Q}|\vec{g}|^{(p\alpha)'}\right)^{\frac{1}{(p\alpha)'}}\\
 & \lesssim\left(\frac{1}{|Q|}\int_{Q}|\mathcal{W}_{\frac{p}{r},Q}^{-1}W^{\frac{1}{p/r}}|^{\frac{p\alpha}{r}}\right)^{\frac{1}{p\alpha}}\left(\frac{1}{|Q|}\int_{Q}|\vec{g}|^{(p\alpha)'}\right)^{\frac{1}{(p\alpha)'}}\\
 & \lesssim\left(\frac{1}{|Q|}\int_{Q}|\mathcal{W}_{\frac{p}{r},Q}^{-1}W^{\frac{1}{p/r}}|^{\frac{p}{r}}\right)^{\frac{1}{p}}\left(\frac{1}{|Q|}\int_{Q}|\vec{g}|^{(p\alpha)'}\right)^{\frac{1}{(p\alpha)'}}\\
 & \lesssim\left(\frac{1}{|Q|}\int_{Q}|\vec{g}|^{(p\alpha)'}\right)^{\frac{1}{(p\alpha)'}}.
\end{align*}
From this point arguing as we did for Calderón-Zygmund operators we
deduce that
\begin{align*}
\left(\sum_{Q\in\mathcal{S}}\left(\frac{1}{|Q|}\int_{Q}|\mathcal{W}_{\frac{p}{r},Q}^{-\frac{1}{r}}W^{\frac{1}{p}}\vec{g}|\right)^{p'}|Q|\right)^{\frac{1}{p'}} & \leq\left\Vert \sup_{Q}\frac{1}{|Q|}\int_{Q}|\mathcal{W}_{\frac{p}{r},Q}^{-1}W^{\frac{1}{p/r}}\vec{g}|^{\frac{1}{r}}\right\Vert _{L^{p'}}\\
 & \lesssim[W]_{A_{\infty,\frac{p}{r}}^{sc}}^{\frac{1}{p}}\|\vec{g}\|_{L^{p'}}
\end{align*}
and we are done.

\subsection{Estimates assuming $C_p$ type conditions}

Note that arguing by duality, exactly the same argument provided above
works, provided we are able to adapt in each case the term involving
$\vec{g}$. We begin with Calderón-Zygmund operators. Note that
\begin{align*}
\left(\sum_{Q\in\mathcal{S}}\left(\frac{1}{|Q|}\int_{Q}|\mathcal{W}_{p,Q}^{-1}W^{\frac{1}{p}}\vec{g}|\right)^{p'}|Q|\right)^{\frac{1}{p'}} & \simeq\left\Vert \sum_{Q\in\mathcal{S}}\left(\frac{1}{|Q|}\int_{Q}|\mathcal{W}_{p,Q}^{-1}W^{\frac{1}{p}}\vec{g}|\right)\chi_{Q}\right\Vert _{L^{p'}}.
\end{align*}
Hence, arguing by duality
\begin{align*}
 & \sum_{Q\in\mathcal{S}}\left(\frac{1}{|Q|}\int_{Q}|\mathcal{W}{}_{p,Q}^{-1}W^{\frac{1}{p}}\vec{g}|\right)\int_{Q}h\\
 & =\sum_{Q\in\mathcal{S}}\left(\frac{1}{|Q|}\int_{Q}|\mathcal{W}{}_{p,Q}^{-1}W^{\frac{1}{p}}|^{\gamma p}\right)^{\frac{1}{\gamma p}}\left(\frac{1}{|Q|}\int|\vec{g}|^{(\gamma p)'}\right)^{\frac{1}{(\gamma p)'}}\frac{1}{|Q|}\int_{Q}h|Q|\\
 & \lesssim\sum_{Q\in\mathcal{S}}\left(\frac{1}{|Q|}\int_{Q}|\mathcal{W}{}_{p,Q}^{-1}W^{\frac{1}{p}}|^{\gamma p}\right)^{\frac{1}{\gamma p}}\left(\frac{1}{|Q|}\int|\vec{g}|^{(\gamma p)'}\right)^{\frac{1}{(\gamma p)'}}\frac{1}{|Q|}\int_{Q}h|Q|\\
 & \lesssim\sum_{Q\in\mathcal{S}}\left(\frac{1}{|Q|}\int_{\mathbb{R}^{n}}M(\chi_{Q})^{q}\right)^{\frac{1}{p}}\left(\frac{1}{|Q|}\int|\vec{g}|^{(\gamma p)'}\right)^{\frac{1}{(\gamma p)'}}\frac{1}{|Q|}\int_{Q}h|Q|\\
 & \lesssim\left(\sum_{Q\in\mathcal{S}}\int_{\mathbb{R}^{n}}M(\chi_{Q})^{q}\left(\frac{1}{|Q|}\int_{Q}h\right)^{p}\right)^{\frac{1}{p}}\left(\sum_{Q\in\mathcal{S}}\left(\frac{1}{|Q|}\int|\vec{g}|^{(rp)'}\right)^{\frac{p'}{(rp)'}}|E_{Q}|\right)^{\frac{1}{p'}}\\
 & \lesssim\|h\|_{L^{p}}\|M_{(rp)'}(|\vec{g}|)\|_{L^{p'}}\lesssim\|h\|_{L^{p}}\||\vec{g}|\|_{L^{p'}}
\end{align*}
where the bound for $h$ in the last inequality follows from Lemma
\cite[Corollary 3.7]{CLRT} with $w=1$.

Analogously, in the case of rough singular integrals, choosing $s>1$
and $\alpha>1$ such that $1<\alpha s<\gamma$ and $1<s(\alpha p)'<p'$
we have that
\begin{align*}
 & \sum_{Q\in\mathcal{S}}\left(\frac{1}{|Q|}\int_{Q}|\mathcal{W}{}_{p,Q}^{-1}W^{\frac{1}{p}}\vec{g}|^{s}\right)^{\frac{1}{s}}\int_{Q}h\\
 & =\sum_{Q\in\mathcal{S}}\left(\frac{1}{|Q|}\int_{Q}|\mathcal{W}{}_{p,Q}^{-1}W^{\frac{1}{p}}|^{s\alpha p}\right)^{\frac{1}{s\alpha p}}\left(\frac{1}{|Q|}\int|\vec{g}|^{s(\alpha p)'}\right)^{\frac{1}{\alpha(\alpha p)'}}\frac{1}{|Q|}\int_{Q}h|Q|\\
 & \lesssim\sum_{Q\in\mathcal{S}}\left(\frac{1}{|Q|}\int_{Q}|\mathcal{W}{}_{p,Q}^{-1}W^{\frac{1}{p}}|^{\gamma p}\right)^{\frac{1}{\gamma p}}\left(\frac{1}{|Q|}\int|\vec{g}|^{s(\alpha p)'}\right)^{\frac{1}{\alpha(\alpha p)'}}\frac{1}{|Q|}\int_{Q}h|Q|\\
 & \leq\sum_{Q\in\mathcal{S}}\left(\frac{1}{|Q|}\int_{\mathbb{R}^{n}}M(\chi_{Q})^{q}\right)^{\frac{1}{p}}\left(\frac{1}{|Q|}\int|\vec{g}|^{s(\alpha p)'}\right)^{\frac{1}{\alpha(\alpha p)'}}\frac{1}{|Q|}\int_{Q}h|Q|\\
 & \lesssim\left(\sum_{Q\in\mathcal{S}}\int_{\mathbb{R}^{n}}M(\chi_{Q})^{q}\left(\frac{1}{|Q|}\int_{Q}h\right)^{p}\right)^{\frac{1}{p}}\left(\sum_{Q\in\mathcal{S}}\left(\frac{1}{|Q|}\int|\vec{g}|^{s(\alpha p)'}\right)^{\frac{p'}{\alpha(\alpha p)'}}|E_{Q}|\right)^{\frac{1}{p'}}\\
 & \lesssim\|h\|_{L^{p}}\|M_{s(\alpha p)'}(|\vec{g}|)\|_{L^{p'}}\lesssim\|h\|_{L^{p}}\||\vec{g}|\|_{L^{p'}}
\end{align*}
where, again, the bound for $h$ in the last inequality follows from
Lemma \cite[Corollary 3.7]{CLRT} with $w=1$.

In the case of $L^{r}$-Hörmander operators, we have that
\begin{align*}
 & \sum_{Q\in\mathcal{S}}\left(\frac{1}{|Q|}\int_{Q}|\mathcal{W}{}_{\frac{p}{r},Q}^{-\frac{1}{r}}W^{\frac{1}{p}}\vec{g}|\right)\int_{Q}h\\
 & =\sum_{Q\in\mathcal{S}}\left(\frac{1}{|Q|}\int_{Q}|\mathcal{W}{}_{\frac{p}{r},Q}^{-\frac{1}{r}}W^{\frac{r}{rp}}|^{\gamma p}\right)^{\frac{1}{\gamma p}}\left(\frac{1}{|Q|}\int|\vec{g}|^{(\gamma p)'}\right)^{\frac{1}{(\gamma p)'}}\frac{1}{|Q|}\int_{Q}h|Q|\\
 & \lesssim\sum_{Q\in\mathcal{S}}\left(\frac{1}{|Q|}\int_{Q}|\mathcal{W}{}_{\frac{p}{r},Q}^{-1}W^{\frac{1}{p}}|^{\frac{\gamma p}{r}}\right)^{\frac{1}{\gamma p}}\left(\frac{1}{|Q|}\int|\vec{g}|^{(\gamma p)'}\right)^{\frac{1}{(\gamma p)'}}\frac{1}{|Q|}\int_{Q}h|Q|\\
 & \leq\sum_{Q\in\mathcal{S}}\left(\frac{1}{|Q|}\int_{\mathbb{R}^{n}}M(\chi_{Q})^{q}\right)^{\frac{1}{p}}\left(\frac{1}{|Q|}\int|\vec{g}|^{(\gamma p)'}\right)^{\frac{1}{(\gamma p)'}}\frac{1}{|Q|}\int_{Q}h|Q|
\end{align*}
and the remainder of the proof is the same as in the case of Calderón-Zygmund
operators.

\section{Proofs of endpoint estimates}
\subsection{Proof of Theorem \ref{eq:RoughEndpoint}}
The argument is an adaption of the one used in \cite{CUIMPRR} for the endpoint estimate of the commutator. We reproduce the full argument here for reader's convenience.

Without loss of generality we may assume that $\lambda=1$ and $\|\vec{f}\|_{L^{1}}=\|\Omega\|_{L^{\infty}(\mathbb{S}^{d-1})}=1$.
If 
\[
G=\{|W(x)T_{\Omega}(W^{-1}f)(x)|>1\}\setminus\{M(|\vec{f}|)(x)>1\},
\]
then it will suffice to prove that 
\[
|G|\leq c_{n,d}[W]_{A_{1}}[W]_{A_{\infty,1}^{sc}}\max\left\{ \log\left([W]_{A_{1}}+e\right),[W]_{A_{\infty,1}^{sc}}\right\} +\frac{1}{2}|G|.
\]
Let $e_{i}$ the canonic basis in $\mathbb{R}^{n}$ and let us consider
$\vec{g}=\chi_{G}\sum_{i=1}^{n}e_{i}$. We then have that for $s>1$ to be chosen, by sparse domination,
\begin{align*}
|G| & =\left|\left\{ x\in G:|W(x)T_{\Omega}(W^{-1}f)(x)>1\right\} \right|\leq\int_{G}|W(x)T_{\Omega}(W^{-1}f)(x)|dx\\
 & \leq c_{n}|\int_{\mathbb{R}^{d}}\left\langle W(x)T_{\Omega}(W^{-1}f)(x),\vec{g}(x)\rangle\right| dx=c_{n}\int_{\mathbb{R}^{d}}\left|\langle T_{\Omega}(W^{-1}f)(x),W(x)\vec{g}(x)\rangle\right| dx\\
 & \le c_{n,d}\|\Omega\|_{L^{\infty}(\mathbb{S}^{d-1})}r'\sum_{Q\in\mathcal{S}}\langle\langle W^{-1}\vec{f}\rangle\rangle_{1,Q}\langle\langle W\vec{g}\rangle\rangle_{r,Q}|Q|
\end{align*}
Now we observe that choosing $r=s=1+\frac{1}{3\cdot2^{d+11}[W]_{A_{\infty,1}^{sc}}}$
we have that $r'=s'\simeq[W]_{A_{\infty,1}^{sc}}$ and that 
\[
rs\leq1+\frac{1}{2^{d+11}[W]_{A_{\infty,1}^{sc}}}.
\]
Relying upon this choice we have that arguing similarly as we did to settle Lemma \ref{lem:KeyAp}
\begin{align*}
 & r'\langle\langle W^{-1}\vec{f}\rangle\rangle_{1,Q}\langle\langle W\vec{g}\rangle\rangle_{s,Q}|Q|\\
 & =r'\langle\langle\mathcal{W}_{1,Q}W^{-1}\vec{f}\rangle\rangle_{1,Q}\langle\langle\mathcal{W}_{1,Q}^{-1}W\vec{g}\rangle\rangle_{r,Q}|Q|\\
 & \leq r'\left(\frac{1}{|Q|}\int_{Q}|\mathcal{W}_{1,Q}W^{-1}\vec{f}|\right)\left(\frac{1}{|Q|}\int_{Q}|\mathcal{W}_{1,Q}^{-1}W\vec{g}|^{r}\right)^{\frac{1}{r}}\\
 & \leq c_{n,d}[W]_{A_{1}}[W]_{A_{\infty,1}^{sc}}\left(\frac{1}{|Q|}\int_{Q}|\vec{f}|\right)\left(\frac{1}{|Q|}\int_{Q}|\mathcal{W}_{1,Q}^{-1}W|^{rs}\right)^{\frac{1}{rs}}\left(\frac{1}{|Q|}\int_{Q}|\vec{g}|^{s'r}\right)^{\frac{1}{sr'}}\\
 & \leq c_{n,d}[W]_{A_{1}}[W]_{A_{\infty,1}^{sc}}\left(\frac{1}{|Q|}\int_{Q}|\vec{f}|\right)\left(\frac{1}{|Q|}\int_{Q}|\mathcal{W}_{1,Q}^{-1}W|\right)\left(\frac{1}{|Q|}\int_{Q}|\vec{g}|^{s'r}\right)^{\frac{1}{s'r}}\\
 & \leq c_{n,d}[W]_{A_{1}}[W]_{A_{\infty,1}^{sc}}\left(\frac{1}{|Q|}\int_{Q}|\vec{f}|\right)\left(\frac{1}{|Q|}\int_{Q}g\right)^{\frac{1}{s'r}}
\end{align*}
where $g=\chi_{G}$, and consequently, 
\[
|G|\leq c_{n,d}[W]_{A_{1}}[W]_{A_{\infty,1}^{sc}}\sum_{Q\in\mathcal{S}}\left(\frac{1}{|Q|}\int_{Q}|\vec{f}|\right)\left(\frac{1}{|Q|}\int_{Q}g\right)^{\frac{1}{s'r}}|Q|
\]
We may assume that $\mathcal{S}$ is $\frac{4}{5}$-sparse. Otherwise
we may split the sparse family $\mathcal{S}$ by \cite[Lemma 6.6]{LN}
and deal just with the maximum of the resulting sparse family sums
times a constant depending only on the sparse constant.

Being that reduction done we now split the sparse family as follows.
We say that $Q\in\mathcal{S}_{k,j}$, $k,j\geq0$ if 
\[
2^{-j-1}<\frac{1}{|Q|}\int_{Q}|\vec{f}(y)|dy\leq2^{-j},\qquad2^{-k-1}<\left(\frac{1}{|Q|}\int_{Q}|g(y)|dy\right)^{\frac{1}{s'r}}\leq2^{-k}.
\]
Then we can write
\begin{align*}
|G| & \leq c_{n,d}[W]_{A_{1}}[W]_{A_{\infty,1}^{sc}}\sum_{j=0}^{\infty}\sum_{k=0}^{\infty}\sum_{Q\in\mathcal{S}_{k,j}}\left(\frac{1}{|Q|}\int_{Q}g(x)\,dx\right)^{\frac{1}{rs'}}\frac{1}{|Q|}\int_{Q}|f(y)|\,dy|Q|\\
 & :=c_{n,d}[W]_{A_{1}}[W]_{A_{\infty,1}^{sc}}\sum_{k=0}^{\infty}\sum_{j=0}^{\infty}s_{k,j}.
\end{align*}
We claim that 
\[
s_{k,j}\leq c_{n,d}[W]_{A_{1}}[W]_{A_{\infty,1}^{sc}}\min\left\{ 2\cdot2^{-k},c_{n,d}2^{-j}2^{k(rs'-1)+rs'}|G|\right\} :=\alpha_{k,j}.
\]
For the first bound we argue as follows. Let $E_{Q}=Q\setminus\bigcup_{\substack{Q'\in\mathcal{S}_{j,k}\\
Q'\subsetneq Q
}
}Q'$. Then 
\begin{align*}
\int_{Q}|\vec{f}(y)|dy & =\int_{E_{Q}}|\vec{f}(y)|dy+\int_{\bigcup_{\substack{Q'\in\mathcal{S}_{j,k}\\
Q'\subsetneq Q
}
}}|\vec{f}(y)|dy\\
 & \leq\int_{E_{Q}}|\vec{f}(y)|dy+\sum_{\substack{Q'\in\mathcal{S}_{j,k}\\
Q'\subsetneq Q
}
}\int_{Q'}|\vec{f}(y)|dy.
\end{align*}
For the second term on the right hand side, we have that since $\mathcal{S}$
is $\frac{4}{5}$-sparse and hence it is $\frac{5}{4}$-Carleson
\[
\sum_{{\substack{Q'\in\mathcal{S}_{j,k}\\
Q'\subsetneq Q
}
}}\int_{Q'}|\vec{f}(y)|dy\leq2^{-j}\sum_{{\substack{Q'\in\mathcal{S}_{j,k}\\
Q'\subsetneq Q
}
}}|Q'|\leq2^{-j-2}|Q|\leq\frac{1}{2}\int_{Q}|\vec{f}(y)|\,dy.
\]
Thus, 
\[
\int_{Q}|\vec{f}(y)|dy\leq2\int_{E_{Q}}|\vec{f}(y)|\,dy,
\]
from which readily follows that
\begin{align*}
s_{k,j}&\leq2\sum_{Q\in\mathcal{S}_{j,k}}\int_{E_{Q}}|\vec{f}(y)|\,dy\left(\frac{1}{|Q|}\int_{Q}g(x)\,dx\right)^{\frac{1}{rs'}}\\
&\leq2\cdot2^{-k}\sum_{Q\in\mathcal{S}_{j,k}}\int_{E_{Q}}|f(y)|\,dy\leq2\cdot2^{-k}\int_{\mathbb{R}^{d}}|\vec{f}(y)|\,dy=2\cdot2^{-k}.
\end{align*}

For the second estimate of $s_{k,j}$, let $\mathcal{S}_{j,k}^{*}$
denote the maximal cubes in $\mathcal{S}_{j,k}$. Then, taking into
account again that $\mathcal{S}$ is $\frac{5}{4}$-Carleson, 
\begin{equation}
\begin{split}s_{k,j} & \leq2^{-j}2^{-k}\sum_{Q\in\mathcal{S}_{j,k}}|Q|\leq2^{-j}2^{-k}\sum_{Q\in\mathcal{S}_{j,k}^{*}}\sum_{P\subseteq Q}|P|\\
 & \leq\frac{5}{4}2^{-j}2^{-k}\sum_{Q\in\mathcal{S}_{j,k}^{*}}|Q|=\frac{5}{4}2^{-j}2^{-k}\bigg|\bigcup_{Q\in\mathcal{S}_{j,k}}Q\bigg|\\
 & =\frac{5}{4}2^{-j}2^{-k}\left|\left\{ x\in\mathbb{R}^{d}\,:\,Mg(x)>2^{-rs'k-rs'}\right\} \right|\\
 & \leq c_{d}2^{-j}2^{k(rs'-1)+rs'}|G|;
\end{split}
\label{eq:G-1}
\end{equation}

Combining the estimates above, we obtain 
\[
|G|\leq c_{n,d}[W]_{A_{1}}[W]_{A_{\infty,1}^{sc}}\sum_{k=0}^{\infty}\sum_{j=0}^{\infty}\alpha_{k,j}.
\]
Fix $\gamma>0$, to be chosen later on. To complete the proof we decompose
the double sum as follows. 
\begin{align*}
\sum_{k=0}^{\infty}\sum_{j=0}^{\infty}\alpha_{k,j} & =\sum_{j\geq\left\lceil \log_{2}\left([W]_{A_{1}}[W]_{A_{\infty}}\gamma\right)\right\rceil +\left\lceil k(2s'-1)+2s'\right\rceil +k}\alpha_{k,j}\\
 & \qquad+\sum_{j<\left\lceil \log_{2}\left([W]_{A_{1}}[W]_{A_{\infty}}\gamma\right)\right\rceil +\left\lceil k(rs'-1)+rs'\right\rceil +k}\alpha_{k,j}.
\end{align*}

To estimate the first sum on the right, note that 
\begin{align*}
 & \sum_{j\geq\left\lceil \log_{2}\left([W]_{A_{1}}[W]_{A_{\infty}}\gamma\right)\right\rceil +\left\lceil k(rs'-1)+rs'\right\rceil +k}\alpha_{k,j}\\
 & \leq c_{n,d}[W]_{A_{1}}[W]_{A_{\infty,1}^{sc}}|G|\sum_{k=0}^{\infty}2^{k(rs'-1)+rs'}\sum_{j\geq\left\lceil \log_{2}\left([W]_{A_{1}}[W]_{A_{\infty,1}^{sc}}\gamma\right)\right\rceil +\left\lceil k(rs'-1)+rs'\right\rceil +k}2^{-j}\\
 & =c_{n,d}[W]_{A_{1}}[W]_{A_{\infty,1}^{sc}}|G|\sum_{k=0}^{\infty}2^{k(rs'-1)+rs'}2^{-\left\lceil \log_{2}\left([W]_{A_{1}}[W]_{A_{\infty,1}^{sc}}\gamma\right)\right\rceil -\left\lceil k(rs'-1)+rs'\right\rceil -k}\\
 & =c_{n,d}[W]_{A_{1}}[W]_{A_{\infty,1}^{sc}}|G|\sum_{k=0}^{\infty}2^{k(rs'-1)+rs'}2^{-\left\lceil \log_{2}\left([W]_{A_{1}}[W]_{A_{\infty,1}^{sc}}\gamma\right)\right\rceil -\left\lceil k(rs'-1)+rs'\right\rceil -k}\\
 & \leq\frac{c_{n,d}[W]_{A_{1}}[W]_{A_{\infty,1}^{sc}}}{[W]_{A_{1}}[W]_{A_{\infty,1}^{sc}}\gamma}|G|\sum_{k=0}^{\infty}2^{-k}\leq\frac{2c_{n,d}}{\gamma}|G|.
\end{align*}
Therefore, it suffices to let $\gamma=4c_{n,d}$.

To estimate the second sum on the right, note that
\begin{align*}
 & \sum_{j<\left\lceil \log_{2}\left([W]_{A_{1}}[W]_{A_{\infty,1}^{sc}}\gamma\right)\right\rceil +\left\lceil k(rs'-1)+rs'\right\rceil +k}\alpha_{k,j}\\
 & \leq c_{n,d}\sum_{k=0}^{\infty}\sum_{1\leq j<\left\lceil \log_{2}\left([W]_{A_{1}}[W]_{A_{\infty,1}^{sc}}\gamma\right)\right\rceil +\left\lceil k(rs'-1)+rs'\right\rceil +k}2^{-k}[W]_{A_{1}}[W]_{A_{\infty,1}^{sc}}\\
 & \leq c_{n,d}\sum_{k=0}^{\infty}\left(\log_{2}\left([W]_{A_{1}}[W]_{A_{\infty,1}^{sc}}4c_{d}\right)+krs'\right)2^{-k}[W]_{A_{1}}[W]_{A_{\infty,1}^{sc}}\\
 & \leq c_{n,d}[W]_{A_{1}}[W]_{A_{\infty,1}^{sc}}\max\left\{ \log\left([W]_{A_{1}}+e\right),\,[W]_{A_{\infty,1}^{sc}}\right\} .
\end{align*}
If we now combine all the preceding estimates, we complete the proof. 

\subsection{Proof of Theorem \ref{eq:HormanderEndpoint}}
We will follow ideas in \cite[p. 2544]{LPRRR}. By duality for Lorentz-Bochner
spaces
\[
\left\Vert |W^{\frac{1}{r}}(x)T(W^{-\frac{1}{r}}\vec{f})(x)|\right\Vert _{L^{r,\infty}(\mathbb{R}^{d})}=\sup_{\||\vec{g}|\|_{L^{r',1}(\mathbb{R}^{d})}=1}\left|\int_{\mathbb{R}^{d}}\langle W^{\frac{1}{r}}(x)T(W^{-\frac{1}{r}}f)(x),\vec{g}(x)\rangle dx\right|.
\]
Hence it suffices to bound the right-hand side. First note that by
sparse domination
\begin{align*}
\left|\int_{\mathbb{R}^{d}}\langle W^{\frac{1}{r}}(x)T(W^{-\frac{1}{r}}f)(x),\vec{g}(x)\rangle dx\right| & =\left|\int_{\mathbb{R}^{d}}\langle T(W^{-\frac{1}{r}}f)(x),W^{\frac{1}{r}}(x)\vec{g}(x)\rangle dx\right|\\
 & \leq c_{n,d,T}\sum_{Q}\langle\langle W^{-\frac{1}{r}}\vec{f}\rangle\rangle_{r,Q}\langle\langle W^{\frac{1}{r}}\vec{g}\rangle\rangle_{1,Q}|Q|.
\end{align*}
Now we observe that choosing $\alpha=1+\frac{1}{2^{d+11}[W]_{A_{\infty,1}^{sc}}}$ we can argue as follows 
\begin{align*}
 & \langle\langle W^{-\frac{1}{r}}\vec{f}\rangle\rangle_{r,Q}\langle\langle W^{\frac{1}{r}}\vec{g}\rangle\rangle_{1,Q}|Q|\\
 & =\langle\langle\mathcal{W}_{1,Q}^{\frac{1}{r}}W^{-\frac{1}{r}}\vec{f}\rangle\rangle_{r,Q}\langle\langle\mathcal{W}_{1,Q}^{-\frac{1}{r}}W^{\frac{1}{r}}\vec{g}\rangle\rangle_{1,Q}|Q|\\
 & \leq\left(\frac{1}{|Q|}\int_{Q}|\mathcal{W}_{1,Q}^{\frac{1}{r}}W^{-\frac{1}{r}}\vec{f}|^{r}\right)^{\frac{1}{r}}\left(\frac{1}{|Q|}\int_{Q}|\mathcal{W}_{1,Q}^{-\frac{1}{r}}W^{\frac{1}{r}}\vec{g}|\right)|Q|\\
 & \leq\left(\frac{1}{|Q|}\int_{Q}|\mathcal{W}_{1,Q}^{\frac{1}{r}}W^{-\frac{1}{r}}|_{op}^{r}|\vec{f}|^{r}\right)^{\frac{1}{r}}\left(\frac{1}{|Q|}\int_{Q}|\mathcal{W}_{1,Q}^{-\frac{1}{r}}W^{\frac{1}{r}}|_{op}|\vec{g}|\right)|Q|\\
 & \leq c_{n}\left(\frac{1}{|Q|}\int_{Q}|\mathcal{W}_{1,Q}W|_{op}|\vec{f}|^{r}\right)^{\frac{1}{r}}\left(\frac{1}{|Q|}\int_{Q}|\mathcal{W}_{1,Q}^{-1}W|_{op}^{\frac{1}{r}}|\vec{g}|\right)|Q|\\
 & \leq c_{n}[W]_{A_{1}}^{\frac{1}{r}}\left(\frac{1}{|Q|}\int_{Q}|\vec{f}|^{r}\right)^{\frac{1}{r}}\left(\frac{1}{|Q|}\int_{Q}|\mathcal{W}_{1,Q}^{-1}W|_{op}^{\alpha}\right)^{\frac{1}{\alpha r}}\left(\frac{1}{|Q|}\int_{Q}|\vec{g}|^{(\alpha r)'}\right)^{\frac{1}{(\alpha r)'}}|Q|\\
 & \leq c_{n}[W]_{A_{1}}^{\frac{1}{r}}\left(\frac{1}{|Q|}\int_{Q}|\vec{f}|^{r}\right)^{\frac{1}{r}}\left(\frac{1}{|Q|}\int_{Q}|\mathcal{W}_{1,Q}^{-1}W|_{op}^{\alpha}\right)^{\frac{1}{\alpha r}}\left(\frac{1}{|Q|}\int_{Q}|\vec{g}|^{(\alpha r)'}\right)^{\frac{1}{(\alpha r)'}}|Q|\\
 & \leq c_{n}[W]_{A_{1}}^{\frac{1}{r}}\left(\frac{1}{|Q|}\int_{Q}|\vec{f}|^{r}\right)^{\frac{1}{r}}\left(\frac{1}{|Q|}\int_{Q}|\vec{g}|^{(\alpha r)'}\right)^{\frac{1}{(\alpha r)'}}|Q|
\end{align*}
Taking these computations and our sparse domination result into account, 
\begin{align*}
\left|\int_{\mathbb{R}^{d}}\langle W^{\frac{1}{r}}(x)T(W^{-\frac{1}{r}}f)(x),\vec{g}(x)\rangle dx\right| & \lesssim[W]_{A_{1}}^{\frac{1}{r}}\sum_{Q\in\mathcal{S}}\left(\frac{1}{|Q|}\int_{Q}|\vec{f}|^{r}\right)^{\frac{1}{r}}\left(\frac{1}{|Q|}\int_{Q}|\vec{g}|^{r\alpha}\right)^{\frac{1}{(r\alpha)'}}|Q|\\
 & \leq[W]_{A_{1}}^{\frac{1}{r}}\int\sum_{Q}\left(\frac{1}{|Q|}\int_{Q}|\vec{f}|^{r}\right)^{\frac{1}{r}}\chi_{Q}(x)M_{(r\alpha)'}(|\vec{g}|)(x)dx\\
 & \leq[W]_{A_{1}}^{\frac{1}{r}}\left\Vert \sum_{Q}\left(\frac{1}{|Q|}\int_{Q}|\vec{f}|^{r}\right)^{\frac{1}{r}}\chi_{Q}\right\Vert _{L^{r,\infty}}\left\Vert M_{(r\alpha)'}(|\vec{g}|)\right\Vert _{L^{r',1}}\\
 & \lesssim[W]_{A_{1}}^{\frac{1}{r}}\left(\frac{r'}{(r\alpha)'}\right)^{'}\||\vec{f}|\|_{L^{r}}\||\vec{g}|\|_{L^{r',1}}\\
 & \lesssim[W]_{A_{1}}^{\frac{1}{r}}[W]_{A_{\infty,1}^{sc}}\||\vec{f}|\|_{L^{r}}\||\vec{g}|\|_{L^{r',1}}
\end{align*}
where
\[
\left\Vert M_{(r\alpha)'}(|\vec{g}|)\right\Vert _{L^{r',1}}\lesssim\left(\frac{r'}{(r\alpha)'}\right)^{'}\||\vec{g}|\|_{L^{r',1}}
\]
was settled in \cite[p. 2544]{LPRRR} and $\left(\frac{r'}{(r\alpha)'}\right)^{'}\lesssim[W]_{A_{\infty,1}^{sc}}$
by the choice of $\alpha$ and Lemma \ref{lem:param1}. This ends
the proof.

\section{Acknowledgement}
This work will be part of the first author's PhD thesis at Universidad Nacional del Sur. This research was partially supported by Agencia I+D+i PICT 2018-02501 and PICT 2019-00018.

\end{document}